\theoremstyle{theorem}
\newtheorem{theorem}{Theorem}
\newtheorem{corollary}{Corollary}
\newtheorem{lemma}{Lemma}
\newtheorem{proposition}{Proposition}
\theoremstyle{definition}
\newtheorem{definition}{Definition}
\newtheorem{remark}{Remark}
\newtheorem{question}{Question}
\theoremstyle{remark}
\newcommand{\bz}{\mathbf{z}}
\newcommand{\BC}{{\mathbb{C}}}
\newcommand{\BF}{{\mathbb{F}}}
\newcommand{\BL}{{\mathbb{L}}}
\newcommand{\BM}{{\mathbb{M}}}
\newcommand{\BN}{{\mathbb{N}}}
\newcommand{\BQ}{{\mathbb{Q}}}
\newcommand{\BR}{{\mathbb{R}}}
\newcommand{\BT}{{\mathbb{T}}}
\newcommand{\BZ}{{\mathbb{Z}}}
\newcommand{\CB}{{\mathcal B}}
\newcommand{\CC}{{\mathcal C}}
\newcommand{\CD}{{\mathcal D}}
\newcommand{\CE}{{\mathcal E}}
\newcommand{\CF}{{\mathcal F}}
\newcommand{\CI}{{\mathcal I}}
\newcommand{\CM}{{\mathcal M}}
\newcommand{\CO}{{\mathcal O}}
\newcommand{\CW}{{\mathcal W}}
\newcommand{\CY}{{\mathcal Y}}
\newcommand{\sH}{\mathsf{H}}
\newcommand{\sS}{\mathsf{S}}
\DeclareFontFamily{OT1}{rsfs}{}
\DeclareFontShape{OT1}{rsfs}{n}{it}{<-> rsfs10}{}
\DeclareMathAlphabet{\curly}{OT1}{rsfs}{n}{it}
\newcommand{\Aut}{\operatorname{Aut}}
\newcommand{\p}{\mathbb{P}}
\newcommand\Spec{\operatorname{Spec}}
\newcommand{\Mbar}{{\overline M}}
\newcommand\ev{\operatorname{ev}}
\newcommand{\Hilb}{\mathsf{Hilb}}
\newcommand{\Sym}{\mathsf{Sym}}
\newcommand{\id}{\mathrm{id}}
\newcommand{\FM}{\mathsf{FM}}
\newcommand{\ch}{\mathsf{ch}}
\newcommand{\rel}{\mathsf{rel}}
\newcommand{\pr}{\mathrm{pr}}
\begin{document}

	\title{On the Hilbert--Chow crepant resolution conjecture}

	\author{Denis Nesterov}

	\begin{abstract} We prove the Hilbert--Chow crepant resolution conjecture in the exceptional curve classes for all projective surfaces and all genera. In particular, this confirms Ruan's cohomological Hilbert--Chow  crepant resolution conjecture. The proof exploits Fulton--MacPherson compactifications, reducing the conjecture to the case of the affine plane. 
		
		As an application, using previous results of the author, we also deduce the families DT/GW correspondence for threefolds \(S \times C\) in classes that are zero on the first factor, yielding a wall-crossing proof of the correspondence in this case. Finally, we speculate on the relationship between Hilbert schemes and Fulton--MacPherson compactifications beyond the topics considered in this work.

	\end{abstract}

	\maketitle
	
	\setcounter{tocdepth}{1} 
	\tableofcontents
	
	\section{Introduction}

\subsection{Crepant resolution conjecture}
Let $S$ be a smooth projective surface over the field of complex numbers $\BC$. Consider its Hilbert scheme of points $\Hilb_n(S)$ and orbifold symmetric product $\Sym_n(S)$. The Nakajima--Grojnowski \cite{Nak,Gro} basis gives rise  to an isomorphism between the  cohomology of  $\Hilb_n(S)$ and the orbifold cohomology of $\Sym_n(S)$, defined by Chen and Ruan \cite{ChenRuan}, 
\begin{equation} \label{Nakajimaident}
H^*(\Hilb_n(S), \BC)\cong H_{\mathrm{orb}}^*(\Sym_n(S),\BC).
\end{equation}
The isomorphism respects the intersection pairings; however, in general, it does not respect the ring structure.  The cohomological crepant resolution conjecture, proposed by Ruan \cite{Ruan},  predicts that the one-parameter quantum cohomology $H_q^*(\Hilb_n(S), \BC)$,  specialised at $q=-1$, provides the correction to the failure of the ring homomorphism. Here, 
\[
H_q^*(\Hilb_n(S), \BC):= H^*(\Hilb_n(S), \BC)\otimes_{\BC} \BC[q]_{(q+1)},
\] such that $\BC[q]_{(q+1)}$ is  the polynomial ring in the variable $q$ localised  at the ideal $(q+1)$, that is, the ring of rational functions with no pole at $q=-1$. The quantum product is defined using the Gromov--Witten invariants associated with curve classes in $\Hilb_n(S)$ that are contracted by the Hilbert--Chow morphism
\[
\Hilb_n(S) \rightarrow S^n/\Sigma_n .
\]
Such curve classes are referred to as exceptional. In the present work, we  prove Ruan's conjecture. 

\begin{theorem} \label{Ruan} For all projective surfaces $S$, we have a graded isomorphism of rings,
	\[ 
	H_{q}^*(\Hilb_n(S), \BC)  \cong H_{\mathrm{orb}}^*(\Sym_n(S),\BC),
	\]
	after substituting $q=-1$. 
	\end{theorem}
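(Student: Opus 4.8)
The plan is to transport everything to the Nakajima--Grojnowski basis, where the isomorphism (\ref{Nakajimaident}) already identifies the two cohomology groups and their pairings, and then to prove that the \emph{ring} structures agree by comparing triple products. Concretely, I would reduce Theorem \ref{Ruan} to the equality, for every triple of Nakajima classes, of the quantum-corrected three-point function on $H^*(\Hilb_n(S),\BC)$ evaluated at $q=-1$ with the Chen--Ruan orbifold three-point function on $H^*_{\mathrm{orb}}(\Sym_n(S),\BC)$. Since the classical ($q=0$) cup product sits inside the quantum product and the pairings already match, all of the content lies in the exceptional genus-zero Gromov--Witten corrections on the Hilbert side and in the twisted-sector obstruction-bundle contributions on the orbifold side.

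The decisive structural fact is \emph{locality}. Exceptional curves are contracted by the Hilbert--Chow morphism $\Hilb_n(S)\to S^n/\Sigma_n$, hence supported over the loci where points collide; I would show that the exceptional three-point invariants depend only on the formal neighbourhood of these collision loci, and likewise that the Chen--Ruan structure constants---being integrals of Euler classes of obstruction bundles over the inertia stack---are functions of the same local geometry. Because $S$ is smooth, every such neighbourhood is analytically $\BA^2$. The Fulton--MacPherson compactification $S[n]$ is the tool that turns this heuristic into a proof: its boundary divisors resolve the diagonals of $S^n$ into a stratification whose strata are fibred in configuration spaces of $\BA^2$, separating \emph{where} on $S$ the points sit from \emph{how} they collide infinitesimally. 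Pulling both the Hilbert--Chow and the symmetrisation maps back to $S[n]$ lets one compare the two theories stratum by stratum, with each stratum governed by the universal $\BA^2$ model.

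The base case is therefore the affine plane, where the statement is known: Okounkov--Pandharipande computed the ($\BT$-equivariant) quantum cohomology of $\Hilb_n(\BA^2)$, and its specialisation at $q=-1$ is identified with the orbifold cohomology of $\Sym_n(\BA^2)$. Feeding this universal local answer back through the Fulton--MacPherson stratification assembles the global ring isomorphism for an arbitrary projective $S$, with the grading matching automatically because the age shift on the orbifold side is exactly the degree shift carried by the exceptional classes under (\ref{Nakajimaident}).

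The hard part is the locality reduction itself rather than any single integral. On the quantum side one must prove, presumably by a degeneration or virtual-localisation argument compatible with the FM blow-ups, that the exceptional invariants genuinely factor through the infinitesimal collision data and receive no global contribution; on the orbifold side one must match the obstruction bundle with the corresponding local model on each stratum. Above all, the two bookkeeping devices---the exceptional curve classes on the Hilbert side and the inertia-stack components on the orbifold side---must be shown to correspond compatibly under the Fulton--MacPherson stratification, uniformly in $n$. Establishing that compatibility across all strata at once, so that the $\BA^2$ input can be summed consistently, is where I expect the genuine difficulty to lie.
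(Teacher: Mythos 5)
Your outline correctly identifies the skeleton of the paper's argument---reduce to the affine plane via Fulton--MacPherson spaces, invoke the known crepant resolution statement for $\Hilb_n(\BC^2)$ (the paper uses Pandharipande--Tseng's Theorems 4 and 5 rather than Okounkov--Pandharipande, though the latter suffices for the three-point case), and match gradings via the age shift. But the step you yourself flag as ``where the genuine difficulty lies'' is exactly the content of the paper, and the mechanism you gesture at (a stratification of $\FM_{[n]}(S)$ with a stratum-by-stratum comparison, or a degeneration to the normal cone of the collision loci) is not what is done and would be hard to make work: the exceptional invariants are \emph{not} purely local to a formal neighbourhood of the discriminant, since the insertions are global classes on $\Hilb_n(S)$ and the answer involves $H^*(S)$ and the Chern roots of $T_S$. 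What the paper actually proves is a \emph{factorisation}: global integrals over Fulton--MacPherson spaces multiplied by universal local contributions from $\Hilb_{n_0}(T_S/S)$, which reduce to the equivariant theory of $\Hilb_{n_0}(\BC^2)$ under $t_i\mapsto z+\alpha_i(S)$.

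The missing idea is the one-parameter family of interpolating moduli spaces $\Hilb^{\epsilon}_{n,[m]}(S)$ and $\Sym^{\epsilon}_{n,[m]}(S)$ indexed by $\epsilon\in\BR_{>0}$, together with the master spaces at each wall $\epsilon_0=1/n_0$. The $\BC^*$-localisation on moduli of maps to the master space (made possible by the relative description of Proposition \ref{Proprel}, which is where the restriction to exceptional classes enters) produces the wall-crossing formulas of Theorems \ref{wsH} and \ref{wsS}; since for $\epsilon>1$ the interpolating spaces carry no nonzero exceptional curves, iterating across all walls yields Corollary \ref{cor}, whose two formulas have literally identical shape for $\Hilb_n(S)$ and $\Sym_n(S)$ and differ only in the $I$-functions. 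This also disposes of your proposed separate matching of Chen--Ruan obstruction bundles: the orbifold side is processed by the same machinery, so the comparison collapses to the equality of $I$-functions at $q=-1$ (Pandharipande--Tseng). Finally, your proposal never addresses the rationality of the series $\langle\cdots\rangle^{\sH_n}_{g}(q)$ in $q$ with no pole at $q=-1$, which is needed before the substitution $q=-1$ even makes sense; in the paper this comes from the rationality of the $\BC^2$ invariants fed through the $\psi$-class integrals on $\FM_n(S)$.
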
 

Previously, this was known for simply connected surfaces by \cite{LiQin}, using entirely different methods that rely on the Nakajima–Grojnowski action of the Heisenberg algebras. 

\subsection{Generalised crepant resolution conjecture}Theorem \ref{Ruan} in fact follows from a much more general statement, Theorem \ref{mainresult}, which, after substituting \( q = -1 \), asserts the equality of the Gromov--Witten invariants of \( \Hilb_n(S) \) in the exceptional curve classes with the degree-zero Gromov--Witten invariants of \( \Sym_n(S) \), for arbitrary genus and number of marked points. This, in particular, confirms a stronger version of the Hilbert--Chow crepant resolution conjectures proposed in much greater generality in \cite{BG, CIT, CRuan}.

To state the result, for $2g-2+k>0$, let 
\[
\langle \lambda_1, \dots, \lambda_k \rangle^{\sH_n}_{g}(q):=\sum_{d\geq 0}\langle \lambda_1, \dots, \lambda_k \rangle^{\sH_n}_{g,d}q^d
\]
denote the generating series of the Gromov--Witten invariants of $\Hilb_n(S)$ in exceptional curve classes, such that  $\lambda_i \in H^*(\Hilb_n(S),\BC)$ are some cohomology classes, $g$ is the genus of curves, and $d$ is the multiplicity of an exceptional curve class.   Similarly, let 
\[
\langle \lambda_1, \dots, \lambda_k \rangle^{\sS_n}_{g}
\]
be the Gromov--Witten invariants of  $\Sym_n(S)$ in the zero class. 

\begin{theorem} \label{mainresult} Assume  $2g-2+k>0$, then for all projective surfaces $S$, the series $\langle \lambda_1, \dots, \lambda_m \rangle^{\sH_n}_{g}(q)$ is the Taylor expansion of a rational function with no pole at $q=-1$.  Moreover, we have
	\[
	 \langle \lambda_1, \dots, \lambda_k \rangle^{\sH_n}_{g}(q)=\langle \lambda_1, \dots, \lambda_k \rangle^{\sS_n}_{g}
	\]
 after applying the identification (\ref{Nakajimaident}) and substituting $q=-1$. 
\end{theorem}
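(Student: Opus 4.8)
The plan is to prove that \emph{both} invariants appearing in the statement are local, in the sense that they are determined by the formal-neighbourhood structure of $S$ at its points, and then to reduce the comparison to the single universal local model $\BA^2 \cong T_pS$. Since every point of a smooth surface has tangent plane abstractly isomorphic to $\BA^2$, once locality is established the two sides must agree for all $S$ as soon as they agree for $\BA^2$. I would therefore split the argument into a global-to-local reduction, carried out uniformly for both theories, followed by an explicit analysis of the affine plane, from which both the rationality and the equality at $q=-1$ will be read off.

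For the reduction I would exploit the Fulton--MacPherson compactification $S[n]$, whose construction resolves the collisions of $n$ points on $S$ by iterated blow-ups of the diagonals: when a cluster of points comes together, a screen isomorphic to $\BP(T_pS \oplus \BC) \cong \BP^2$ is attached, and the internal configuration of the colliding points is recorded in the tangent chart $T_pS \cong \BA^2$. The exceptional stable maps to $\Hilb_n(S)$ are precisely those whose image is contracted by Hilbert--Chow, hence those supported over the diagonal strata of $\Sym_n(S)$; under degeneration to the normal cones of the diagonals these maps bubble off onto the FM screens. I would use this to establish a degeneration/gluing formula expressing $\langle \lambda_1,\dots,\lambda_k\rangle^{\sH_n}_g(q)$ as a sum over clusterings of the $n$ points, with each cluster contributing a local series computed on the corresponding screen; because the geometry of every screen is modelled on $T_pS\cong\BA^2$, each such contribution is independent of $S$. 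The degree-zero orbifold invariants $\langle\lambda_1,\dots,\lambda_k\rangle^{\sS_n}_g$ of $\Sym_n(S)=[S^n/\Sigma_n]$ are handled in parallel: they factor through the inertia components indexed by partitions (conjugacy classes in $\Sigma_n$), and the degree-zero constraint forces each orbit to sit over a single point of $S$, so that these invariants likewise decompose into the same clusters of purely local contributions.

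With the reduction in place it remains to treat $S=\BA^2$. Here I would use the torus action of $(\BC^*)^2$ on $\BA^2$ and virtual localization on the moduli of stable maps to $\Hilb_n(\BA^2)$. The fixed loci are governed by the combinatorics of partitions and by Hodge integrals on $\Mbar_{g,k}$; rationality of $\langle\lambda_1,\dots,\lambda_k\rangle^{\sH_n}_g(q)$ as a function of $q$ with no pole at $q=-1$ follows because each localization contribution is a rational function of $q$ whose denominators are controlled by the equivariant weights along the exceptional curves. The equality at $q=-1$ with the degree-zero orbifold theory of $\Sym_n(\BA^2)=[\BA^{2n}/\Sigma_n]$ is the crepant resolution correspondence for this local model, which I would verify by matching the two localization computations term by term: both reduce, after the identification (\ref{Nakajimaident}), to the same sums of Hodge integrals weighted by the representation theory of the symmetric group.

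The main obstacle I anticipate is the degeneration/gluing step: one must show that the virtual classes, the obstruction theories, and --- crucially --- the Nakajima--Grojnowski identification (\ref{Nakajimaident}) are all compatible with the FM degeneration, so that no excess or boundary contributions spoil the factorization into local pieces, and so that the clusterings on the Hilbert-scheme side match those on the symmetric-product side. Controlling the interaction between the Hilbert--Chow contraction and the FM screens --- in particular ensuring that contracted stable maps distribute correctly over the screens and that the relative and local theories glue without correction terms --- is where the technical heart of the proof lies.
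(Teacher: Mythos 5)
Your high-level strategy --- use Fulton--MacPherson screens to localize exceptional stable maps onto local models isomorphic to $T_pS$ and thereby reduce both theories to the affine plane --- is exactly the strategy of the paper. The gap is that you leave the global-to-local reduction as a hoped-for ``degeneration/gluing formula,'' and that formula \emph{is} the technical content of the proof; it does not exist off the shelf. A degeneration-to-the-normal-cone argument for stable maps to $\Hilb_n(S)$ that bubbles them onto FM screens would itself be a substantial theorem requiring a compactified moduli problem in which the bubbling is witnessed. What the paper actually constructs is a one-parameter family of interpolating spaces $\Hilb^{\epsilon}_{n,[m]}(S)$ and $\Sym^{\epsilon}_{n,[m]}(S)$ between $\Hilb_n(S)$, $\Sym_n(S)$ and $\FM_n(S)$, together with master spaces carrying a $\BC^*$-action; the fixed-locus analysis (Lemma \ref{important}, Proposition \ref{isomophism}) yields the wall-crossing formulas of Theorems \ref{wsH} and \ref{wsS}, and iterating them gives Corollary \ref{cor}, which is the precise form of your clustering formula. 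Note that it is not a naive sum of products over clusters: the local contributions enter as $I$-functions evaluated at $-\Psi_i$, i.e.\ with $\psi$-classes of the Fulton--MacPherson space substituted for the equivariant parameter, and one also needs Proposition \ref{Proprel} to know that exceptional maps are confined to fibers over $\mathcal{FM}_{[m]}(S)$ so that the relative obstruction theory can be used at all. Without a construction playing this role, the argument does not close.

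Two further points. First, your claim that each local contribution ``is independent of $S$'' is false as stated: the local pieces are $S$-relative invariants of $\Hilb_{n_0}(T_S/S)$, and by Lemma \ref{Iuniversal} they are obtained from the \emph{torus-equivariant} invariants of $\Hilb_{n_0}(\BC^2)$ by the substitution $t_i \mapsto z+\alpha_i(S)$, where $\alpha_i(S)$ are the Chern roots of $T_S$. The dependence on $S$ through $c(T_S)$ is essential, and the rationality and pole-freeness at $q=-1$ must survive this substitution, the expansion in the range $|z|>1$, and the final substitution of $\psi$-classes for $z$ (this is where the rationality over $\BQ(t_1,t_2)$ from \cite{PanT} and \cite[Corollary 6.4]{NHilb} are both needed). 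Second, you underestimate the affine-plane case: the rationality in $q$, the absence of a pole at $q=-1$, and the evaluation there for $\Hilb_n(\BC^2)$ versus $\Sym_n(\BC^2)$ are not a term-by-term localization match one can ``verify'' --- they are the main theorems of Pandharipande--Tseng \cite{PanT}, which the paper cites rather than reproves. Your outline implicitly proposes to reprove \cite{PanT}, which is a separate and substantial undertaking.
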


See \cite[Section 1.6]{N22} for references to some other instances of the Hilbert--Chow crepant resolution conjecture outside the case of exceptional curve classes. 

Our techniques can also  be used to establish various refinements of Theorem~\ref{mainresult}, including torus-equivariant and cohomological field theoretic versions of the crepant resolution conjecture. Moreover, using the results of~\cite{deCatMig}, it should  be possible to lift Theorem~\ref{Ruan} to the level of Chow groups. 

\subsection{Methods}The proof of Theorem \ref{mainresult} exploits another space closely related to $\Hilb_n(S)$ and $\Sym_n(S)$, namely,  the unordered Fulton--MacPherson compactification,
\[
\FM_n(S),
\] 
constructed in \cite{FM}. 
All three spaces are different compactifications of configuration spaces of points on $S$. However, $\FM_n(S)$ occupies a special place among them, taking the role of a bridge between $\Hilb_n(S)$ and $\Sym_n(S)$, as both  admit interpolating spaces to $\FM_n(S)$, which depend on a real parameter $\epsilon \in \BR_{>0}$, 
\[ 
\Hilb^{\epsilon}_n(S) \quad \text{and} \quad \Sym^\epsilon_n(S).
\]
The first arises through the author's notion of   $\epsilon$-weighted subschemes \linebreak \cite{NHilb}.  
The second is provided by Hassett's stability \cite{Hass}, later generalised by Routis \cite{Rou} to arbitrary dimension.  These interpolating spaces give rise to wall-crossing formulas. In \cite{NHilb}, they were explored in the context of tautological integrals on $\Hilb_n(S)$. Here, we apply them to Gromov--Witten invariants of $\Hilb_n(S)$. Using wall-crossing formulas,  we can reduce the crepant resolution conjecture of a general surface $S$ to the corresponding conjecture for the affine plane $\BC^2$, which was established by Pandharipande and Tseng in \cite{PanT}. The quantum multiplication by the divisor class was determined by Okounkov and Pandharipande in \cite{OkPa}; it is sufficient to deduce Theorem \ref{Ruan}. 

The case $2g - 2 + k \le 0$ is special because degree $d=0$ invariants are not defined. With appropriate conventions, it should still satisfy the crepant resolution conjecture; however, it was not pursued in \cite{PanT} and is therefore not pursued here.

%Note that if $g=0$ and $k=2$, the invariants for $\BC^2$ have a pole at $q=-1$, hence the assumption on $g$ and $k$ in Theorem \ref{mainresult} is necessary. 

\subsection{Exceptional curve classes} The main reason we restrict to exceptional curve classes is a slightly technical but simple result Proposition \ref{Proprel}, which is best illuminated through the perspective of product threefolds $S\times C$. Recall that by the moduli interpretation of $\Hilb_n(S)$, to a map 
\[
f \colon  C \rightarrow \Hilb_n(S),
\]
we can associate a 1-dimensional subscheme 
\[
\Gamma \subset S\times C,
\]
which is of degree $n$ over $C$. 
The class $f_*([C])$ is exceptional, if and only if $\Gamma$ is given by a nilpotent thickening of horizontal copies of $C$,
\[ 
\{ x\}\times C \subset S\times C. 
\]
The wall-crossing considered in Section \ref{Mapsmaster} can be viewed as associated to the parameter $\epsilon \in \BR_{>0}$ that controls the length of this thickening at the expense of introducing Fulton--MacPherson degenerations in the $S$-direction. Consequently, a large part of the technical results can be deduced from those of \cite{NHilb}, since the moduli space of horizontal subschemes on $S \times C$ behaves similarly to the moduli space of points on $S$; in particular, we can prevent two horizontal subschemes from colliding by applying the Fulton--MacPherson degeneration at a point $x \in S$.
For $\Sym_n(S)$, the same principle holds, but instead of the nilpotent thickening of subschemes, we vary the degrees of ramified covers of horizontal curves in $S\times C$.  More precisely, this heuristics is formalised in terms of the relative descriptions of moduli spaces of maps to both $\Hilb^{\epsilon}_n(S)$ and $\Sym^{\epsilon}_n(S)$, as stated in  Proposition \ref{Proprel} and, for master spaces, in Proposition \ref{masterrelative}.

 Since  curves in the exceptional  classes must have at least length 2 thickening at one of its components, by setting $\epsilon=1$ we obtain an empty moduli space of maps, unless the class is zero. Similarly, for $\Sym_n(S)$, only degree $1$ covers of  horizontal curves are allowed for $\epsilon=1$.  This means that Gromov--Witten invariants of $\Hilb_n(S)$ and $\Sym_n(S)$ can be described solely in terms of wall-crossing invariants, giving rise to formulas in Corollary \ref{cor} expressing them in terms of Gromov--Witten invariants of $\Hilb_n(\BC^2)$ and $\Sym_n(\BC^2)$, and $\psi$-integrals on $\FM_n(S)$.  Theorem \ref{mainresult} follows from Corollary \ref{cor} and the crepant resolution conjecture for Hilbert schemes of points on $\BC^2$. Theorem \ref{Ruan} is obtained from Theorem \ref{mainresult} by setting $g=0$ and $k=3$. 

For non-exceptional  classes, subschemes  $\Gamma \subset S \times C$ are no longer supported on horizontal curves. Hence,  Fulton--MacPherson degenerations at points $x \in S$ are not sufficient to prevent collisions of such subschemes in $S \times C$. More sophisticated wall-crossing techniques than those introduced in \cite{NHilb} are therefore required. 
\subsection{Fulton--MacPherson spaces} It is remarkable that  $\FM_n(S)$ interpolates between  $\Hilb_n(S)$ and $\Sym_n(S)$, especially in light of the fact that its cohomology is governed by completely different principles, e.g., it is far  from being isomorphic to $H^*(\Hilb_n(S),\BC)$. It nevertheless  interacts with the classical and Gromov--Witten intersection theories of $\Hilb_n(S)$ through the wall-crossing formulas established in \cite{NHilb} and the present work.  

Another connection between $\FM_n(S)$ and $\Hilb_n(S)$ is provided by their blow-up interpretations. On the one hand, in \cite{FM}, the ordered Fulton--MacPherson compactification $\FM_{[n]}(S)$ is constructed as an iterated blow-up of the strata of diagonals 
\[
\Delta=\{(x_1,\dots,x_n) \in S^n \mid x_i=x_j \text{ for  }i\neq j \} \subset S^n,
\]
 i.e., the locus where at least two points coincide. On the other hand, by a result of Haiman \cite{Hai}, the isospectral Hilbert scheme\footnote{It can  be viewed as an ordered version of Hilbert schemes.} $\Hilb^{\mathrm{iso}}_n(S)$ is given by the blow-up at the whole locus $\Delta$. In particular, by the universal property of blow-ups, we obtain a birational morphism, 
\[ 
\pi \colon \FM_{[n]}(S) \rightarrow \Hilb^{\mathrm{iso}}_n(S),
\]
which is surprising from the moduli-theoretic point of view, as $\pi$ assigns to each marked Fulton--MacPherson degeneration of $S$ a 0-dimensional subscheme on $S$. In light of this morphism, we put forward the following questions, the answers to which should clarify the relation between $\Hilb_n(S)$ and $\FM_n(S)$. 
\begin{question}
	 Can $\pi$ be factored through iterated blow-ups? If yes, are the intermediate spaces given by isospectral versions of moduli spaces of $\epsilon$-weighted subschemes from \cite{NHilb}? 
\end{question}
\begin{question}
 What are the pullbacks of Nakajima--Grojnowski  or tautological classes via $\pi$ in terms Fulton--MacPherson boundary divisors? Is there a relation to wall-crossing formulas from \cite{NHilb}?
 \end{question}
\begin{question}
 After replacing $\Hilb^{\mathrm{iso}}_n(S)$ by the main component of the isospectral Hilbert scheme, does $\pi$ exist in higher dimensions?
\end{question}

For Question 1, we expect a blow-up construction of $\Hilb^\epsilon_n(S)$ parallel to that of Hassett and Routis for moduli spaces of weighted points. 

Question 2 should clarify the relationship between the cohomologies of the two spaces and, in particular, the algebraic structures that govern them, namely the Heisenberg algebra and the Fulton--MacPherson operad.

An affirmative answer to Question 3 would imply that Fulton–MacPherson spaces yield resolutions of singularities of the main components of isospectral Hilbert schemes in arbitrary dimension. The blow-up description of Hilbert schemes established by Haiman was subsequently generalized by Ekedahl and Skjelnes in \cite{ES}. Consequently, Question 3 is  concerned with the geometry of the Ekedahl–Skjelnes ideal.

\subsection{GW/DT correspondence} The crepant resolution conjecture is related to the Gromov--Witten/Donaldson--Thomas correspondence, proposed in \cite{MNOP1, MNOP2}, for threefolds $S\times C$ through a different kind of wall-crossing formulas established in \cite{NePi} and \cite{N22}. The results in this work therefore imply the correspondence for $S\times C$ in classes that are zero on the first factor. 

  Let 
\[
\CC_{g,k} \rightarrow \Mbar_{g,k}
\]
be the universal genus $g$ curve with $k$ markings. One can consider the Donaldson--Thomas theory of the moving threefold 
\[ 
\pi_g \colon S \times \CC_{g,k} \rightarrow \Mbar_{g,k},
\]
that is, the virtual intersection theory of relative Hilbert schemes of 1-dimensional subschemes associated to $\pi_g$. In this theory, 
the insertions are given by specifying   relative incidence conditions at the vertical divisors associated with the marked points $p_i$,
\[ 
S\times \{p_i\} \subset S\times C , \quad i\in \{1, \hdots, k\},
\]
which correspond to cohomology classes $\lambda_i \in H^*(\Hilb_n(S),\BC)$. Let 
\[
\langle \lambda_1, \dots, \lambda_k \rangle^{\mathsf{DT}_n}_{g}(q)=\sum_{d\geq 0}\langle \lambda_1, \dots, \lambda_k \rangle^{\mathsf{DT}_n}_{g,d}q^d
\]
denote that generating series of Donaldson--Thomas invariants of $\pi_g$ in the curve class
\[
\ch_2=(0,n) \in H^2(S) \oplus  H^4(S)\subseteq H^4(S\times C)
\]
and with the third Chern  character
\[
\ch_3=d \in H^6(S\times C).
\]  The quasimap wall-crossing formula of \cite{NePi} gives rise to the  following DT/Hilb correspondence. 

\begin{theorem}[\hspace{-.01cm}\cite{NePi}] \label{quasimapwall} Assume $2g-2+k>0$, then 
\[	
\langle \lambda_1, \dots, \lambda_k \rangle^{\mathsf{DT}_n}_{g}(q)= \langle \lambda_1, \dots, \lambda_k \rangle^{\sH_n}_{g}(q). 
\]
For $2g-2+k\leq 0$, there is an explicit correction term computed in \cite[Proposition 8.6]{NePi}.\footnote{The result in \cite[Proposition 8.6]{NePi} is stated for del Pezzo surfaces, but it holds for any surface if the class is $(0,n)$. The assumption $2g-2+k>0$ is needed to use the divisor equation for the simplification of the wall-crossing terms in \cite[Theorem C]{NePi}.}
\end{theorem}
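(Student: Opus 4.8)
The plan is to realise both sides as invariants of a single moduli problem of $\epsilon$-stable quasimaps from the fibres of $\pi_g$ to $\Hilb_n(S)$, and then to connect the two chambers $\epsilon = 0^+$ and $\epsilon = \infty$ by the quasimap wall-crossing of \cite{NePi}. First I would set up the moduli-theoretic dictionary already recalled in the excerpt: a $1$-dimensional subscheme $\Gamma \subset S \times C$ that is finite flat of degree $n$ over $C$ is exactly a map $C \to \Hilb_n(S)$, the condition $\ch_2 = (0,n)$ forces $\Gamma$ to be supported on horizontal curves $\{x\} \times C$, and $\ch_3 = d$ records the exceptional multiplicity. Dropping flatness and allowing embedded or floating points along $C$ is precisely what introduces base points, so the full relative Hilbert scheme of $\pi_g$ in the class $((0,n),d)$ is the $\epsilon = 0^+$ quasimap chamber, computing $\langle \lambda_1, \dots, \lambda_k \rangle^{\mathsf{DT}_n}_{g}(q)$, whereas the $\epsilon = \infty$ chamber of honest stable maps to $\Hilb_n(S)$ computes $\langle \lambda_1, \dots, \lambda_k \rangle^{\sH_n}_{g}(q)$. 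The two chambers are separated by finitely many walls in $\epsilon \in \BR_{>0}$.

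Second, I would run the wall-crossing across each wall by means of a master space interpolating the two adjacent stabilities. Its virtual localisation residue expresses the jump in invariants as contributions of rational tails that carry a base point of fixed exceptional multiplicity and are glued to the principal component at a node. Because the support is purely horizontal, these tail contributions are controlled by the quasimap vertex, equivalently the $I$-function of $\Hilb_n(S)$ in the exceptional classes; this is the engine of \cite[Theorem C]{NePi}. Summing over the finitely many walls then yields a general wall-crossing formula relating $\langle \lambda_1, \dots, \lambda_k \rangle^{\mathsf{DT}_n}_{g}(q)$ to $\langle \lambda_1, \dots, \lambda_k \rangle^{\sH_n}_{g}(q)$ up to an explicit series of vertex corrections in $q$.

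Third, and this is the crux, I would simplify those corrections. Each correction attaches a rational tail at a node and hence produces a free node-marking on the principal component; when $2g - 2 + k > 0$ there is enough stability on that component to apply the divisor and string equations, pulling the node insertion back to the main curve. Summed against the tail multiplicities, the resulting vertex factors collapse and every wall-crossing term cancels, giving the asserted equality of generating series. When $2g - 2 + k \le 0$ the forgetful maps underlying the divisor equation do not exist, the corrections survive, and one is left with the explicit term of \cite[Proposition 8.6]{NePi}.

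The hard part is the vanishing in the third step: one must verify that the vertex corrections, after the divisor equation, genuinely telescope to zero rather than to a nontrivial automorphism of the quantum cohomology of $\Hilb_n(S)$. This is exactly where the special geometry of the class $(0,n)$ is decisive — horizontal support means the base-point contributions are of a restricted type and the insertions $\lambda_i$ act divisorially along the vertical divisors $S \times \{p_i\}$ — and where the hypothesis $2g-2+k>0$ must be used in an essential, non-formal way.
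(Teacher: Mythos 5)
The paper does not prove Theorem \ref{quasimapwall} at all --- it is imported verbatim from \cite{NePi}, with the footnote only recording that the divisor equation is what simplifies the wall-crossing terms of \cite[Theorem C]{NePi} under the hypothesis $2g-2+k>0$. Your sketch is a faithful reconstruction of that cited strategy ($\epsilon$-stable quasimaps interpolating between the relative Hilbert scheme at $\epsilon=0^{+}$ and stable maps to $\Hilb_n(S)$ at $\epsilon=\infty$, master-space localisation across finitely many walls, divisor/string equations to dispose of the vertex corrections), so it is essentially the same approach as the source the paper relies on; the only point to be careful about is that the corrections need not literally vanish term by term but are shown in \cite{NePi} to be absorbed after applying the divisor equation, which is where $2g-2+k>0$ enters.
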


Similarly, let
\[
\langle \lambda_1, \dots, \lambda_k \rangle^{\mathsf{GW}_n}_{g}(u)
=
\sum_{m\geq 0}
\langle \lambda_1, \dots, \lambda_k \rangle^{\mathsf{GW}_n}_{g,m}\,u^{m}.
\]
be the Gromov--Witten invariants of $\pi_g$ in the same curve class $(0,n)$, with the branching divisor of degree~$m$.
The degree of the branching divisor determines the genus of the source curves by the Riemann--Hurwitz formula, as explained in \cite[Lemma 2.6]{N22}. Then, the Gromov--Witten/Hurwitz wall-crossing formula \cite{N22} gives rise to the following GW/Sym correspondence. See \cite[Section 6.3]{N22} for how the degree of the branching divisor and the insertion of the divisor class $D$ are related. 
\begin{theorem}[\hspace{-.01cm}\cite{N22}] \label{GWwall} Assume $2g-2+k>0$, then 
\[	
\langle \lambda_1, \dots, \lambda_k \rangle^{\mathsf{GW}_n}_{g}(u)= \sum_{m\geq0} \frac{1}{m!} \langle \lambda_1, \dots, \lambda_k, \underbrace{D,\dots, D}_{m} \rangle^{\sS_n}_{g} u^m,
\]
where $D$ is the orbifold class associated to the partition $(2,1,\hdots,1)$. For $2g-2+k\leq0$, there is a correction term computed in \cite[Proposition 5.1]{N22}. 
\end{theorem}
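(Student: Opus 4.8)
The plan is to establish Theorem \ref{GWwall} as a Gromov--Witten/Hurwitz wall-crossing formula, interpolating between the two theories through the family of weighted symmetric products $\Sym^\epsilon_n(S)$ and the relative descriptions of maps into them. I would begin by pinning down the moduli-theoretic meaning of both sides in terms of the geometry of $S\times C$. Because the curve class is $(0,n)$, a map to $\Sym_n(S)$ is in the zero class on the $S$-factor, so by the relative description of Proposition \ref{Proprel} it is the same datum as a degree-$n$ admissible $\Sigma_n$-cover of the (moving) domain curve, realised as a horizontal family of points in $S\times C$ whose connected components are contracted to points of $S$; the insertions $\lambda_i$ impose the conditions pulled back from $H^*(\Sym_n(S),\BC)$ at the $k$ markings. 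Under the Abramovich--Corti--Vistoli identification of twisted stable maps to $[S^n/\Sigma_n]$ with admissible covers, the twisted sector $D$ labelled by the partition $(2,1,\dots,1)$ is exactly a transposition, hence a simple branch point: inserting $D$ at a marking forces a simple ramification there, and a configuration of $m$ such insertions is a branch locus of total degree $m$.

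Next I would match the two ends of the interpolation. At the symmetric-product end the moduli reduces to orbifold stable maps to $\Sym_n(S)$ in the zero class, where free branch points are no longer allowed and ramification survives only as orbifold marked points carrying the twisted sector $D$; this produces the right-hand side. At the opposite, threefold end the same moduli computes the relative Gromov--Witten theory of the moving threefold $\pi_g\colon S\times\CC_{g,k}\to\Mbar_{g,k}$ in class $(0,n)$, where ramification is instead recorded by the branching divisor of degree $m$ and tracked by $u^m$. The core of the argument is the comparison of these two conventions as $\epsilon$ crosses its walls: $m$ \emph{unmarked} simple branch points of total degree $m$ are traded for $m$ \emph{ordered} marked points each decorated by $D$, at the price of the symmetry factor $\tfrac{1}{m!}$. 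It is here that the hypothesis $2g-2+k>0$ enters, through the divisor equation, which collapses the a priori complicated wall-crossing contributions into the clean product $\tfrac{1}{m!}\langle\lambda_1,\dots,\lambda_k,D,\dots,D\rangle^{\sS_n}_g\,u^m$ and hence into the stated generating series; when $2g-2+k\le 0$ the divisor equation is unavailable and one must retain the correction term of \cite[Proposition 5.1]{N22}.

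I expect the main obstacle to lie in the comparison of virtual classes across the wall-crossing rather than in the combinatorics. Concretely, one constructs a master space as in Proposition \ref{masterrelative}, carrying a $\BC^*$-action whose fixed loci separate the threefold contributions from the $\Sym_n(S)$ contributions, and then runs virtual localisation to isolate the wall terms. The delicate point is to show that, in this zero-class, horizontal setting, the only surviving wall contributions come from bubbling off a single simple ramification, i.e.\ from the transposition sector $D$; the higher-ramification strata and the completed-cycle phenomena familiar from Gromov--Witten/Hurwitz theory must be shown to cancel or to be reabsorbed into the $D$-insertions, so that the degree $u^m$ of the branching divisor matches exactly $m$ copies of $D$. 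Establishing this compatibility of the virtual class on $\Sym^\epsilon_n(S)$ with the Riemann--Hurwitz bookkeeping is the heart of the matter; once it is in place, the Abramovich--Corti--Vistoli identification, the divisor equation, and the symmetry factor assemble into the claimed formula.
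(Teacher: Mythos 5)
The first thing to note is that the paper does not prove Theorem \ref{GWwall}: it is imported verbatim from \cite{N22}, and the present paper only uses it (together with Theorems \ref{mainresult} and \ref{quasimapwall}) to deduce the DT/GW correspondence for $\pi_g\colon S\times\CC_{g,k}\to\Mbar_{g,k}$. There is therefore no internal proof to compare against; what can be assessed is whether your sketch is a faithful reconstruction of the argument of \cite{N22}.

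In one essential respect it is not. You build the interpolation out of the machinery of the present paper --- the weighted symmetric products $\Sym^{\epsilon}_{n,[m]}(S)$, Proposition \ref{Proprel}, and the master space of Proposition \ref{masterrelative}. That wall-crossing interpolates between $\Sym_n(S)$ and the Fulton--MacPherson space $\FM_n(S)$, trading collisions of points for FM degenerations of $S$; its two endpoints are orbifold Gromov--Witten theory of $\Sym_n(S)$ and $\psi$-integrals on $\FM_n(S)$, and the relative Gromov--Witten theory of the threefold $\pi_g$ never appears as a fixed locus of that master space. The introduction is explicit that Theorem \ref{GWwall} rests on ``a different kind of wall-crossing formulas'' established in \cite{N22}: there the stability parameter controls the ramification behaviour of degree-$n$ covers of the moving domain curve inside $S\times C$, so that one extreme recovers stable maps to $S\times C$ (branching recorded by the degree-$m$ branching divisor, hence the variable $u$) and the other recovers twisted stable maps to $\Sym_n(S)$ (branching recorded by orbifold markings in the sector $D$). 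Your local analysis --- $D$ as the transposition sector, the $1/m!$ from ordering branch points, the divisor equation as the source of the hypothesis $2g-2+k>0$, and the need to control higher-ramification/completed-cycle contributions --- is consistent with how the paper glosses the statement, but grafting it onto the $\Sym^\epsilon$/master-space apparatus of this paper cannot produce the left-hand side $\langle\cdots\rangle^{\mathsf{GW}_n}_{g}(u)$ at all. As a reconstruction of the cited proof the proposal therefore has a structural gap, independent of the finer virtual-class issues you correctly flag.
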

Combining Theorems \ref{mainresult}, \ref{quasimapwall}, \ref{GWwall} and the divisor equation on $\Hilb_n(S)$, we obtain a wall-crossing proof of the DT/GW correspondence for $\pi_g$ in a class $(0,n)$; see \cite[Section 6.4]{N22} for more details.  
\begin{corollary} Assume $2g-2+k>0$, then 
	\[ 
	\langle \lambda_1, \dots, \lambda_k \rangle^{\mathsf{DT}_n}_{g}(q)=\langle \lambda_1, \dots, \lambda_k \rangle^{\mathsf{GW}_n}_{g}(u)
	\] 
	after substituting $q=-e^{iu}$. 
\end{corollary}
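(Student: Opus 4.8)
The plan is to assemble the identity from the three wall-crossing statements already recorded, using the divisor equation on $\Hilb_n(S)$ as the device that promotes the single numerical identity of Theorem~\ref{mainresult} (an equality of invariants at the point $q=-1$) into the full analytic substitution $q=-e^{iu}$. First I would rewrite both sides in terms of the symmetric product. On the Donaldson--Thomas side, Theorem~\ref{quasimapwall} gives
\[
\langle \lambda_1, \dots, \lambda_k \rangle^{\mathsf{DT}_n}_{g}(q)\big|_{q=-e^{iu}}
=\langle \lambda_1, \dots, \lambda_k \rangle^{\sH_n}_{g}\!\left(-e^{iu}\right),
\]
so it suffices to match the right-hand side with the Gromov--Witten series. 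On the Gromov--Witten side, Theorem~\ref{GWwall} expresses it as an exponential generating series in the orbifold divisor $D$ attached to $(2,1^{n-2})$,
\[
\langle \lambda_1, \dots, \lambda_k \rangle^{\mathsf{GW}_n}_{g}(u)
=\sum_{m\geq 0}\frac{u^m}{m!}\,\langle \lambda_1, \dots, \lambda_k, \underbrace{D,\dots,D}_{m}\rangle^{\sS_n}_{g}.
\]

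The next step is to feed Theorem~\ref{mainresult} into this sum, applied not to the bare insertions but to each term carrying $m$ extra copies of $D$ (which I abbreviate $D^m$). Since Theorem~\ref{mainresult} is stated for arbitrary insertions and any number of markings, and $2g-2+(k+m)>0$ follows from $2g-2+k>0$, and since under (\ref{Nakajimaident}) the class $D$ corresponds to a genuine divisor class $D$ on $\Hilb_n(S)$, we obtain $\langle \lambda_1,\dots,\lambda_k,D^{m}\rangle^{\sS_n}_{g}=\langle \lambda_1,\dots,\lambda_k,D^{m}\rangle^{\sH_n}_{g}(q)\big|_{q=-1}$. I would then invoke the divisor equation on $\Hilb_n(S)$, valid at each stage because $2g-2+k>0$: each insertion of $D$ multiplies the degree-$d$ coefficient by $\int_{\beta_d}D$, which is linear in the multiplicity $d$. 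At the level of the generating series this turns every $D$ insertion into the operator $c\,q\tfrac{d}{dq}$, with $c=\int_{\beta_1}D$ the pairing with the primitive exceptional class, so that
\[
\sum_{m\geq 0}\frac{u^m}{m!}\,\langle \lambda_1, \dots, \lambda_k, D^{m}\rangle^{\sH_n}_{g}(q)
=\exp\!\left(cu\,q\tfrac{d}{dq}\right)\langle \lambda_1, \dots, \lambda_k \rangle^{\sH_n}_{g}(q)
=\langle \lambda_1, \dots, \lambda_k \rangle^{\sH_n}_{g}\!\left(e^{cu}q\right),
\]
since $c\,q\tfrac{d}{dq}$ generates the dilation $q\mapsto e^{cu}q$. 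Evaluating at $q=-1$ and comparing with the Donaldson--Thomas side reduces the corollary to the single scalar identity $c=i$.

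The one genuinely quantitative point, and the step I expect to demand the most care, is therefore the bookkeeping of this constant: verifying that the pairing of $D$ with the primitive exceptional curve, computed with the Chen--Ruan and orbifold normalizations implicit in Theorems~\ref{GWwall} and \ref{mainresult}, produces exactly the factor $i$ that converts $e^{cu}$ into $e^{iu}$, which is precisely the computation carried out in \cite[Sections~6.3--6.4]{N22}. A secondary point is to justify the resummation analytically rather than merely formally. Here Theorem~\ref{mainresult} is what makes the argument legitimate: it guarantees that $\langle \lambda_1,\dots,\lambda_k\rangle^{\sH_n}_{g}(q)$ is a rational function with no pole at $q=-1$, so $u\mapsto \langle\lambda_1,\dots,\lambda_k\rangle^{\sH_n}_{g}(-e^{iu})$ is holomorphic near $u=0$ and its Taylor expansion there agrees term by term with the exponential series above. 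This upgrades the formal manipulation to an equality of power series and yields the substitution $q=-e^{iu}$.
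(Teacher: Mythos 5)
Your proposal is correct and follows essentially the same route as the paper, whose proof is precisely the one-line combination of Theorems \ref{mainresult}, \ref{quasimapwall}, \ref{GWwall} with the divisor equation on $\Hilb_n(S)$, deferring the normalization of the constant (your $c=i$, which arises from the factor $(-1)^{\mathrm{age}/2}$ with $\mathrm{age}(D)=1$ in the identification (\ref{isoL})) to \cite[Section 6.4]{N22}. Your expansion of the resummation step, using the rationality from Theorem \ref{mainresult} to justify the substitution $q=-e^{iu}$ analytically, is a faithful and correctly argued elaboration of what the paper leaves implicit.
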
	
We believe that these techniques extend beyond  the classes of the form $(0,n)$. The results of \cite{NePi,N22} indeed hold for all classes. On the other hand, a more general version of Corollary \ref{cor} should express arbitrary Gromov--Witten invariants of $\Hilb_n(S)$ and $\Sym_n(S)$  in terms of Gromov--Witten invariants of $\Hilb_n(\BC^2)$ and $\Sym_n(\BC^2)$, and Gromov--Witten invariants of $\FM_n(S)$. We would even speculate that this phenomenon should  arise for other (non-product) threefolds. The main evidence is the wall-crossing  between unramified and standard Gromov--Witten invariants considered in \cite{NuGW}, which, in a certain sense, generalises the wall-crossing from \cite{N22} to non-product geometries.

  Developing a theory analogous to unramified Gromov–Witten theory on the sheaf side remains an open problem. Developing a theory for arbitrary threefolds analogous to maps to Fulton–MacPherson spaces is an even more challenging task, which is directly related to the problem of constructing a moduli space of ``embedded" subschemes/sheaves/maps whose virtual intersection theory gives Gopakumar--Vafa invariants. 
\begin{question} Can the GW/DT correspondence be proved for all threefolds by using wall-crossing techniques to reduce it to the case of local curves?
	\end{question}

The recent proof of the GW/DT correspondence for non-negative geometries was carried out using symplectic techniques \cite{Par},  reducing the correspondence to the case of local curves. Note that it is not applicable to the relative geometry $\pi_g$ in the curve class $(0,n)$. 

\subsection{Acknowledgments}
The idea that the crepant resolution conjecture could be proved by relating Hilbert schemes and symmetric products via a common intermediate space was conceived during my PhD. However, it was undermined by a negative answer to the  question, “Does there exist a (sheaf-theoretic) stability condition interpolating between the two spaces?”, posed to Thorsten Beckmann.\footnote{The coarse quotient \(S^n/\Sigma_n\) can indeed be viewed as a moduli space of rank-\(1\) sheaves for the zero Gieseker stability, while \(\Hilb_n(S)\) is associated with a nonzero Gieseker stability. One could therefore speculate that \(\Sym_n(S)\) arises from a stability condition more general than Gieseker stability (for example, a Bridgeland stability condition).
} The idea remained speculative until I learned about complex Fulton–MacPherson spaces through the article of Kim, Kresch, and Oh \cite{KKO}.

  I thank Georg Oberdieck for helpful discussions on Nakajima–Grojnowski classes, and Rahul Pandharipande for insights into the results of \cite{PanT}. This research was supported by a Hermann-Weyl-instructorship at the \linebreak Forschungsinstitut
f\"ur Mathematik at ETH Z\"urich.

\subsection{Conventions} We work over the field of complex numbers $\BC$. Various spaces of ordered and unordered points will feature in this article. We reserve the notation $[n]$ for the set $\{1,\dots, n\}$ and use it whenever ordered points are considered. On the other hand, $n$ denotes an integer and is used in the case of unordered points.  
\section{Preliminaries}
\subsection{Nakajima--Grojnowski  classes} \label{NGbasis} Let 
\[
\Hilb_n(S)
\]
denote the moduli space of 0-dimensional  subschemes of length $n$ on $S$. Choose an ordered homogenous basis  
\[
\{\gamma_1, \dots, \gamma_r \}
\]
 of $H^*(S, \BC)$. Given a class $\gamma \in H^*(S, \BC)$ and an integer $k>0$, let 
\[ 
a_{k}(\gamma) \colon H^*(\Hilb_n(S),\BC) \rightarrow H^*(\Hilb_{n+k}(S),\BC) 
\]
be the Nakajima--Grojnowski raising operator \cite{Nak, Gro}. Consider now a cohomologically weighted partition of $n$, 
\begin{equation} \label{part}
\mu=((\mu_1, \gamma_{i_1}),\dots, (\mu_{\ell(\mu)}, \gamma_{i_{\ell(\mu)}})), 
\end{equation}
that is, a partition  $\sum \mu_j=n$  together with  classes $\gamma_{i_j} \in \{\gamma_1, \dots, \gamma_r \}$  assigned to its parts. We put a standard (partial) order on the parts of partitions, 
\begin{equation} \label{standard}
(\mu_j, \gamma_{i_j}) > (\mu_{j'}, \gamma_{i_{j'}}) \quad \text{if } \mu_j>\mu_{j'}\text{, or if }\mu_j=\mu_{j'}\text{ and }i_j>i_{j'}.
\end{equation}
 Using the operators $a_{k}(\gamma)$,  we  associate a Nakajima--Grojnowski  class to $\mu$, 
\[\lambda_{\sH}(\mu):=a_{\mu_1}(\gamma_{i_1})\dots a_{\mu_{\ell(\mu)}}(\gamma_{i_{\ell(\mu)}})\cdot 1_S \in H^*(\Hilb_n(S),\BC),\]
so that the product follows the standard order of the parts of $\mu$,\footnote{It is necessary put the order on the parts because the operators  $a_{k}(\gamma)$ for odd $\gamma$ and $k>0$ do not commute (they super-commute); for surfaces with the vanishing first Betti number $b_1(S)=0$ this is not relevant.} and  
\[
1_S \in H^0(\Hilb_0(S),\BC)
\] is the identity class of a point.  
By the results of Nakajima and Grojnowski \cite{Nak,Gro}, such classes provide a basis of  $H^*(\Hilb_n(S),\BC)$. They admit an alternative description in terms of nested Hilbert schemes, which we recall in Section \ref{epsilonNak}; it will be used for the construction of Nakajima--Grojnowski classes on interpolating spaces. 

With respect to the standard cohomological pairing on $H^*(\Hilb_n(S),\BC)$, we have
\begin{equation} \label{Hpairing}
\langle \lambda_{\sH}(\mu), \lambda_{\sH}(\eta) \rangle^{\sH_n}=(-1)^{\mathrm{age}(\mu)}\mathfrak{z}(\mu)\delta_{\mu,\eta^\vee},
\end{equation}
where 
\[
\mathfrak{z}(\mu)= |\Aut(\mu)|\cdot \prod_j \mu_j,
\]
and  $\mathrm{age}(\mu)= n-\ell(\mu)$. The dual partition $\eta^\vee$ has the same parts $\eta_j$ as $\eta$ but the corresponding classes are dual with respect to the standard cohomological pairing on $ H^*(S, \BC)$. Note that $\Aut(\mu)$ denotes the automorphism group of the cohomologically weighted partition $\mu$, not of $(\mu_1, \dots, \mu_{\ell(\mu)})$.  The identity (\ref{Hpairing}) can be  seen by using the Nakajima--Grojnowski action of the Heisenberg algebra on the cohomology of $\Hilb_n(S)$. 
\subsection{Orbifold cohomology} Let 
\[
\Sym_n(S):=[S^n/\Sigma_n]
\]
 denote the orbifold symmetric product of $S$, where $\Sigma_n$ is the symmetric group on $n$ elements.  The orbifold cohomology,  introduced\footnote{Orbifold cohomology was also conceived by Kontsevich, but his results were never published; see the Appendix of \cite{Abra} for a historical account.} by Chen and Ruan \cite{ChenRuan},
 \[
 H_{\mathrm{orb}}^*(\Sym_n(S),\BC),
 \]
  is defined via the singular cohomology of the inertia stack 
\[
\CI \Sym_n(S)=\Sym_n(S)\times_{\Delta} \Sym_n(S),
\]
where 
\[ 
\Delta \colon \Sym_n(S) \rightarrow \Sym_n(S)^2 
\]
is the diagonal embedding. Before presenting the precise definition, recall that  the inertia stack of a symmetric product can expressed as follows, 
\begin{equation} \label{Inertia}
\CI \Sym_n(S)=\coprod_{[\sigma]}[\Sym_n(S)^\sigma/C(\sigma)],
\end{equation}
where the disjoint union is taken over conjugacy classes $[\sigma]$  of the symmetric group $\Sigma_n$, the space $\Sym_n(S)^\sigma$ is the locus in $S^n$ fixed by a representative $\sigma$ of the conjugacy class $[\sigma]$, and $C(\sigma)$ is the centraliser of $\sigma$. 
Conjugacy classes of the symmetric group are in one-to-one correspondence with partitions of $n$, via the cycle lengths in the disjoint-cycle decomposition of a permutation.
In particular, we have  identifications, 
\begin{align}\label{ident1}
	\begin{split}
&1\rightarrow \prod_s \BZ^{m_s}_{s} \rightarrow C(\sigma) \rightarrow \prod_{s} \Sigma_{m_s} \rightarrow 1,  \\
& \Sym_n(S)^\sigma\cong S^{\ell(\sigma)},
\end{split}
\end{align}
where 
\[
(1^{m_1},\dots, s^{m_s} )=(\mu_1, \dots, \mu_{\ell(\mu)})
\] is the partition associated to the conjugacy class of $\sigma$ by decomposing $\sigma$ into disjoint cyclic permutations and recording their lengths, and $\ell(\sigma)=\ell(\mu)$. The integers $m_s$ denote multiplicities of the parts of the partition.  The subgroup  $\prod_s \BZ^{m_s}_{s}$  acts trivially on $S^{\ell(\sigma)}$, while 
\[
\prod_{s} \Sigma_{m_s}\cong \Aut(\mu_1,\dots,\mu_{\ell(\mu)}) 
\] permutes the factors of $S^{\ell(\sigma)}$ corresponding to equal parts. Consequently, observe the following identity, 
\[
|C(\sigma)|=|\Aut(\mu_1,\dots,\mu_{\ell(\mu)})|\cdot \prod_j \mu_j. 
\]
With respect to the correspondence between conjugacy classes and partitions, we define the orbifold cohomology of a symmetric product, 
\[ 
H_{\mathrm{orb}}^*(\Sym_n(S),\BC):=H^{*-2 \mathrm{age}(\sigma)}(\CI\Sym_n(S),\BC),
\]
such that 
\[
 \mathrm{age}(\sigma)= \mathrm{age}(\mu)=n-\ell(\mu)
\]

Consider now a cohomologically weighted partition $\mu$ as in (\ref{part}). By using the identification (\ref{ident1}), we can associate to it a class in  $H_{\mathrm{orb}}^*(\Sym_n(S),\BC)$,
\vspace{0.2cm}
\[
\lambda_{\sS}(\mu)=\sum_{\rho \in C(\sigma)}\rho^*(\gamma_{i_1} \otimes\hdots \otimes \gamma_{i_{\ell(\mu)}} ) \in H^{*-2\mathrm{age}(\sigma)}(S^{\ell(\sigma)},\BC)^{C(\sigma)},
\] 
where we use the natural isomorphism between the cohomology of a finite-group quotient and the invariant part of the cohomology. Classes $\lambda_S(\mu)$ span the $C(\sigma)$-invariant part of $H^{*}(S^{\ell(\sigma)},\BC)$, hence, they form a basis of $H_{\mathrm{orb}}^*(\Sym_n(S),\BC)$. See \cite{FB} for a detailed study of orbifold cohomologies of global quotients. 

A non-degenerate pairing on $H_{\mathrm{orb}}^*(\Sym_n(S),\BC)$ is defined to be the standard cohomological pairing on each component $[\Sym_n(S)^\sigma/C(\sigma)]$ of the inertia stack.  A direct calculation yields that
\begin{equation} \label{Spairing}
\langle  \lambda_{\sS}(\mu), \lambda_{\sS}(\eta) \rangle^{\sS_n}=\mathfrak{z}(\mu)\delta_{\mu,\eta^\vee},
\end{equation}
where the notation is the same as in (\ref{Hpairing}).  

\subsection{Isomorphism of cohomology groups with pairings} The cohomologies of both spaces,  
\[
H^*(\Hilb_n(S),\BC) \quad \text{and} \quad H_{\mathrm{orb}}^*(\Sym_n(S),\BC),
\] 
have a basis given by cohomologically weighted partitions $\mu$. Moreover, their parings, (\ref{Hpairing}) and (\ref{Spairing}), agree up to a sign. In particular, after a choice of ordered basis of $H^*(S,\BC)$, we have an isomorphism of two groups which respects the pairings, 
\begin{align} \label{isoL}
	\begin{split}
 H^*(\Hilb_n(S),\BC) &\xrightarrow{\sim}H_{\mathrm{orb}}^*(\Sym_n(S),\BC)\\
\lambda_\sH(\mu) &\mapsto (-1)^{\frac{\mathrm{age}(\mu)}{2}}\lambda_\sS(\mu).
\end{split}
\end{align}

For surfaces with the non-vanishing first Betti number $b_1(S)\neq0$, the isomorphism depends on the order of the basis  $\{\gamma_1, \dots, \gamma_r \}$; changing the order or the elements of the basis can change the sign of $L$ at some classes $\lambda_\sH(\mu) $. For this reason, the ordering is necessary. 
\subsection{Orbifold  ring structure}  The simplest way to define an associative ring structure on the orbifold cohomology  $H_{\mathrm{orb}}^*(\Sym_n(S),\BC)$ is to use the moduli space of degree $0$ representable maps from  $\p^1$ twisted at $(0,1, \infty)$, 
\[
\Mbar_{0,3}(\Sym_n(S), 0),
\]
which is constructed in \cite{AGV1, AGV}.
More precisely, at $(0,1, \infty)$, the projective line $\p^1$ is an orbifold with $\BZ_k$-stabilisers for  varying $k \in \BN$. Since $\CI \Sym_n(S)$ can be viewed as the moduli space of maps from $B\BZ_k$ to $\Sym_n(S)$ for varying $k$, we have evaluation morphisms at $(0,1, \infty)$,
\[
\ev_j \colon \Mbar_{0,3}(\Sym_n(S), 0) \rightarrow \CI \Sym_n(S), \quad j=1, 2, 3. 
\]
The $3$-point Gromov--Witten invariants associated to  $\lambda_j \in H_{\mathrm{orb}}^*(\Sym_n(S),\BC)$ are defined by the integration over $\Mbar_{0,3}(\Sym_n(S), 0)$, 
\[ 
\langle \lambda_1, \lambda_2, \lambda_3\rangle^{\sS_n}_{0} =\int_{[\Mbar_{0,3}(\Sym_n(S), 0)]^\mathrm{vir}} \prod^3_{j=1}\ev_j^*\lambda_j. 
\]
Using the  non-degenerate intersection pairing and the $3$-point invariants, we define a product of two elements,
\[
\lambda_1\cdot \lambda_2 \in  H_{\mathrm{orb}}^*(\Sym_n(S),\BC)
\] 
by 
\[ 
\langle \lambda_1 \cdot \lambda_2, \lambda_3\rangle^{\sS_n}= \langle \lambda_1, \lambda_2, \lambda_3\rangle^{\sS_n}_0. 
\]
It is associative by the Witten--Dijkgraaf--Verlinde--Verlinde (WDVV) equation.  

The product admits a more explicit expression by unpacking the definition of $\Mbar_{0,3}(\Sym_n(S), 0)$. Indeed, this moduli space can be viewed as a space of degree $n$ covers of $\p^1$ ramified at $(0,1,\infty)$ together with a constant map from the source of a cover to $S$. With respect to this identification, $[\Mbar_{0,3}(\Sym_n(S), 0)]^\mathrm{vir}$ can be expressed in terms of Chern classes of the Hodge bundle on moduli spaces of curves. 

More generally, we can  define higher-genus degree 0 Gromov--Witten invariants of $\Sym_n(S)$ with any number of insertions, 
\[ 
\langle \lambda_1, \dots, \lambda_k\rangle^{\sS_n}_{g} =\int_{[\Mbar_{g,k}(\Sym_n(S), 0)]^\mathrm{vir}} \prod^k_{j=1}\ev_j^*\lambda_j,
\]
which can also be interpreted in terms of spaces of ramified covers of genus $g$ curves together with constant maps to $S$.  
 
\subsection{Quantum cohomology}

Let 
\[
E \in H_2(\Hilb_n(S),\BZ)
\]
 be the primitive exceptional curve class  of the Hilbert--Chow morphism 
\[
\Hilb_n(S) \rightarrow S^n/\Sigma_n.
\] 
In terms of Nakajima--Grojnowski classes, it is given by 
\begin{equation*}
	E=\lambda_{\sH}(\mu) \quad \text{for} \quad \mu= ((2,\mathrm{pt}),(1,\mathrm{pt}) \dots, (1, \mathrm{pt})).
\end{equation*}

Consider the moduli spaces of stable maps to $\Hilb_n(S)$ in an exceptional curve class, 
 \[ 
 \Mbar_{g,k}(\Hilb_n(S), dE). 
 \]
 We define Gromov--Witten invariants of Hilbert schemes of points, 
\[ 
\langle \lambda_1, \dots, \lambda_k \rangle^{\sH_n}_{g,d} = \int_{[\Mbar_{g,k}(\Hilb_n(S), dE)]^\mathrm{vir}} \prod^k_{j=1} \ev_j^* \lambda_j. 
\]
Genus $0$ invariants provide a one-parameter quantum deformation of the standard ring structure of $H^*(\Hilb_n(S),\BC)$, 
\[ 
\langle  \lambda_1 \cdot_q  \lambda_2, \lambda_3 \rangle^{\sH_n}= \sum_{d\geq 0} \langle \lambda_1, \lambda_2, \lambda_3  \rangle^{\sH_n}_{0,d}q^d,
\]
where $q$ is a formal variable. In particular,  this equips $H^*(\Hilb_n(S), \BC)[\! [ q]\!]$ with an associative ring structure. 
% \ref{mainresult}, the series $\langle  \lambda_1 \cdot_q  \lambda_2, \lambda_3 \rangle^{\sH}$ are in fact Taylor expansions of rational functions with no pole at $q=-1$. 
\subsection{Fulton--MacPherson spaces}\label{FMspaces} Let $\FM_{[n]}(S)$ denote the ordered Fulton--MacPherson space of $n$ points, which is constructed as an iterated blow-up of $S^n$ in \cite{FM}, 
\[ 
\pi \colon \FM_{[n]}(S) \rightarrow S^n. 
\]
The cohomology of Fulton--MacPherson spaces is of different nature. While for Hilbert schemes of points and symmetric products it is governed by partitions, for Fulton--MacPherson spaces it is given by rooted trees.  More precisely, we have 
\[ 
H^*(\FM_{[n]}(S),\BC)=H^*(S^n)[D_I]/\mathrm{relations},
\]
such that $D_I$ are boundary divisors (i.e., exceptional divisors of $\pi$), and the relations involve the Chern classes of $S$, diagonals of $S^n$  and $D_I$, as shown in \cite{FM}. To a stable tree with $n$ leaves, we can associate a locus in $\FM_{[n]}(S)$, a generic point of which is  a marked degeneration of $S$ whose intersection graph is the given tree. Such loci can be realised as intersections of $D_I$. Heuristically, $D_I$ and their intersections are Fulton--MacPherson versions of classes associated to partitions $\mu$ discussed in previous sections.

We will not need the exact presentation of the cohomology, as it is not particularly useful for the intersection theory. Our primary interest lies in 
 the $\psi$-classes, 
\[ 
\Psi_i=\sum_{i\in I}D_I, \quad i=1,\dots,n. 
\]
 By \cite{NHilb}, they also admit a different interpretion in terms of rank 2 vector bundles $\BL_i$ associated to cotangent spaces at marked points, 
\[
\Psi_i=\frac{1}{2}(\mathrm{c}_1(\BL_i)+\pi_i^*\mathrm{c}_1(S)),
\]
where $\pi_i$ is given by composing $\pi$ with the projection to the $i$-th factor $p_i \colon S^n \rightarrow S$, 
\[ 
\pi_i:= p_i \circ \pi \colon  \FM_{[n]}(S) \rightarrow S. 
\] 
Using these structures, we can define the following intersection numbers 
\[ 
\langle  \Psi^{h_1}_1 \gamma_1, \dots, \Psi^{h_n}_n \gamma_n\rangle^{\FM_n} = \int_{[\FM_{[n]}(S)]} \prod^{n}_{i=1} \Psi_i^{h_i}\pi_i^* \gamma_i.
\]
Dilaton, string, and divisor equations (see \cite{NHilb} for the first two) give an effective method to compute these integrals. 
\section{Interpolating spaces}
\subsection{Moduli spaces of $\epsilon$-weighted subschemes}
We recall the definition of moduli spaces of $\epsilon$-weighted subschemes $\Hilb^{\epsilon}_n(S)$,  which interpolate between space $\Hilb_n(S)$ and $\FM_n(S)$. For the wall-crossing formula, we will need a slightly more general framework which allows  extra $m$ ordered markings on FM degenerations, $\Hilb^{\epsilon}_{n,[m]}(S)$, which were introduced in \cite{NHilb}.

 	\begin{definition} \label{defnweight}
	Let $\epsilon \in \BR_{>0}$ be real number.  Let $W$ be a FM degeneration together with a zero-dimensional subscheme $Z\subset W$ and $m$ distinct ordered point $\underline{p}:=\{p_1,\dots,p_m\}$. The triple $(W,Z,\underline{p})$ is $\epsilon$-weighted, if 
	\begin{enumerate} 
		\item $\underline{p} \cap Z=\emptyset$,
		\item $\underline{p} \subset W^{\mathrm{sm}}$ and $Z\subset W^{\mathrm{sm}}$, 
		\item for all $x \in W$, we have $\ell_x(Z)\leq 1/\epsilon$,
		\item  for all end components  $\p^{2} \subset W$, such that $\underline{p} \cap \p^{2}=\emptyset$,  we have $\ell(Z_{|\p^{2}})>1/\epsilon$, 
		\item the group $\{g \in \Aut(W, \underline{p}) \mid g^*Z=Z\}$ is finite.
	\end{enumerate} 	
\end{definition}

A moduli space of $\epsilon$-weighted subschemes is defined as the open substack of relative Hilbert schemes, 
\[
\Hilb^{\epsilon}_{n,[m]}(S) \subseteq \Hilb_n(\CW/\mathcal{FM}_{[m]}(S)),
\]
where 
\[
\CW \rightarrow \mathcal{FM}_{[m]}(S)
\] is the universal FM degeneration over the moduli stack of not necessarily stable  FM degenerations with $m$ marked points $\mathcal{FM}_{[m]}(S)$. These are proper and smooth Deligne--Mumford stacks by \cite{NHilb}.

\subsection{Nakajima--Grojnowski classes} \label{epsilonNak} 
 Although, the Nakajima--Grojnowski correspondences can  be similarly  constructed for $\Hilb^{\epsilon}_{n,[m]}(S)$, the nested Hilbert schemes approach is more suitable for the definition of Nakajima--Grojnowski classes on  $\Hilb^{\epsilon}_{n,[m]}(S)$. 

For a cohomologically weighted partition $\mu$, as in (\ref{part}),   let 
\[
\Hilb^{\epsilon}_{\mu,[m]}(S)
\]
be the $\mathcal{FM}_{[m]}(S)$-relative moduli spaces of flags of $0$-dimensional subschemes, 
\[
\underline{Z}=\{   Z_1 \subset Z_2  \subset \hdots \subset Z_{\ell(\mu)}\},
\]
such that $(W,Z_{\ell(\mu)},\underline{p})$ is $\epsilon$-weighted, and 
\[ 
\CO_{Z_{i}}/\CO_{Z_{i-1}}
\]
is a length $\mu_i$ sheaf supported on a single point; the parts of the partition are considered with respect to the standard order (\ref{standard}). There is a natural projection to $S^{\ell(\mu)}$ given by sending a flag $\underline{Z}$ to the support of the quotients followed by the contraction of FM degenerations, 

\begin{align} \label{correspondence0}
\begin{split}
q \colon \Hilb^{\epsilon}_{\mu,[m]}(S)& \rightarrow S^{\ell(\mu)}, \\
\underline{Z} &\mapsto (\mathrm{Supp}(\CO_{Z_{1}}),\hdots, \mathrm{Supp}(\CO_{Z_{\ell(\mu)}}/\CO_{Z_{\ell(\mu)-1}}) ).
\end{split}
\end{align}

There is also a projection given by forgetting all but the last element of the flag,
\begin{align*}
p \colon \Hilb^{\epsilon}_{\mu,[m]}(S) &\rightarrow \Hilb^{\epsilon}_{n,[m]}(S), \\
\underline{Z} &\mapsto Z_{\ell(\mu)}.
\end{align*}
The properness of the morphism $p$ readily follows from the properness of relative nested Quot schemes; we leave the details to the reader. We define a Nakajima--Grojnowski class associated to the cohomologically weighted partition $\mu$, 
\begin{align*}
\lambda_{\sH}(\mu):=p_*(q^*(\gamma_{i_1}\otimes \hdots \otimes \gamma_{i_\ell}) \cap  [\Hilb^{\epsilon}_{\mu,[m]}(S)]) \in H^*(\Hilb^{\epsilon}_{n,[m]}(S)).
\end{align*}
For Hilbert schemes $\Hilb_n(S)$, this agrees with the classes constructed by the Nakajima--Grojnowski correspondences; see, for example, the end of the proof of \cite[Theorem 2.4]{NOY}. 
%We define the relative Nakajima correspondence,
%\[ 
%N \subset \mathcal{FM}_{n}^{\epsilon}(S) \times_{\mathcal{FM}(S)} \FM_{n+k}^{\epsilon}(S)\times_{\mathcal{FM}(S)} \CW,
%\]
%as the closure of the locus of quadruples $(Z_1, Z_2, W,x)$, such that $Z_1$ and $Z_2$ are subschemes of a Fulton--MacPherson degeneration $W$ of length $n$ and $n+k$, respectively, and 
%\[ 
%Z_1 \subset Z_2, \quad \mathrm{supp}(Z_2)\setminus \mathrm{supp}(Z_1)=x \in W.  
%\]
%Let 
%\[
%\tau \colon \CW \rightarrow S
%\] be the universal contraction of a Fulton--MacPherson degeneration. As for Hilbert %schemes, we then define
% \[ 
%a_{k}(\gamma) \colon H^*(\FM_{n}^{\epsilon}(S) ,\BC) \rightarrow %H^*(\FM_{n+k}^{\epsilon}(S) ,\BC) 
% \]
%to be the operator given by 
%\[ 
%a_{k}(\gamma)(\lambda):=p_{2*}(p_1^*\lambda \cdot \tau^*\gamma \cap [N]).
%\]
%If $k>1/\epsilon$, the operator $a_{k}(\gamma)$ is set to be trivial. Following the same %conventions as in Section \ref{NGbasis}, we define classes associated to a %cohomologically weighted partition $\mu$, 
%\[ 
%\lambda_{\sH}(\mu):=a_{\mu_1}(\gamma_{i_1})\dots a_{\mu_\ell}(\gamma_{i_\ell})\cdot %1_S \in H^*(\FM_{n}^{\epsilon}(S), \BC).
%\]
%Unlike for Hilbert schemes, these classes do not span the cohomology of  %$\FM_{n}^{\epsilon}(S,\BC)$, since now there are also Fulton--MacPherson boundary %classes. However, we will not need to know much about these classes, apart from the %fact that they restrict to Nakajima--Grojnowski classes on fixed components of the master space in %Section \ref{}. 

\begin{remark} \label{classes} For effective algebraic classes $\gamma_{i_j}$, the classes $\lambda_{\sH}(\mu)$ are  closures of Nakajima--Grojnowski classes from the open dense subset of $\FM_{n,[m]}^{\epsilon}(S)$ parametrizing $\epsilon$-weighted  subschemes on $S$ (i.e., the FM degeneration $W$ is $S$ itself). 
\end{remark}

\subsection{Exceptional curve class} \label{exc} The spaces $\FM_{n,[m]}^{\epsilon}(S)$ possess an exceptional curve class $E$ for all $\epsilon<1$, which is defined as 
\[ 
E=\lambda_{\sH}(\mu) \quad \text{for} \quad \mu= ((2,\mathrm{pt}),(1,\mathrm{pt}) \dots, (1, \mathrm{pt})). 
\]
Geometrically, this class is given by letting the length-2 reduced structure vary on one point in $S$, while other $n-1$ points are distinct and fixed. Note that it is a zero class on $\FM_{n,[m]}(S)$, i.e., when $\epsilon=1$. This will make the Gromov--Witten theory of  $\FM_{n,[m]}(S)$  irrelevant for the purposes of the present work. 
%apart from the fact that for $\epsilon=0^+$ they recover standard Nakajima--Grojnowski classes. 
\subsection{Moduli spaces of $\epsilon$-weighted points}
We recall the definition of moduli spaces of $\epsilon$-weighted points $\Sym^{\epsilon}_n(S)$,  which interpolate between spaces $\Sym_n(S)$ and $\FM_n(S)$. They were defined by Hassett \cite{Hass} in dimension 1 and generalised by Routis \cite{Rou} to arbitrary dimensions; we also refer to \cite{NHilb}. As for Hilbert schemes, we will need a more general interpolating space which allows extra $m$  ordered markings on FM degenerations, $\Sym^{\epsilon}_{n, [m]}(S)$. 

\begin{definition}
Let  $\epsilon \in \BR_{>0}$ be a real number.  Let $W$ be a FM degeneration together with two sets of ordered points, $\underline{x}=\{x_1,\dots,x_n\}$ and $\underline{p}=\{p_1,\dots,p_m\}$, such that the points in $\underline{p}$ are pairwise distinct.  The triple $(W, \underline{x}, \underline{p})$ is $\epsilon$-weighted, if 
\begin{enumerate} 
	\item  $\underline{x} \cap  \underline{p}=\emptyset$,
	\item $\underline{x} \subset W^{\mathrm{sm}}$ and $\underline{p} \subset W^{\mathrm{sm}}$, 
	\item for all $x \in W$, we have $\mathrm{mult}_x(\sum_i x_i)\leq 1/\epsilon$, 
	\item  for all end components $\p^{2} \subset W$, such that $\underline{p} \cap \p^2=\emptyset$, we have 
	 $\mathrm{mult}({\sum_i x_i}_{|\p^{2}})>1/\epsilon$, 
	\item the group of automorphisms $ \Aut(W, \underline{x}, \underline{p})$ is finite.
\end{enumerate} 	
\end{definition}

We define a moduli space of ordered $\epsilon$-weighted points as the open substack of the relative $n$-fold products, 
\[
\FM^{\epsilon}_{[n+m]}(S) \subseteq \CW\times_{\mathcal{FM}_{[m]}(S)}\hdots \times_{\mathcal{FM}_{[m]}(S)}\CW,
\]
where, as before, $\CW \rightarrow \mathcal{FM}_{[m]}(S)$ is the universal FM degeneration over the moduli stack of not necessarily stable  FM degenerations with $m$ marked points  $\mathcal{FM}_{[m]}(S)$. They are smooth and proper varieties. The unordered version of this space is given by the $\Sigma_n$-quotient, 
\[
\Sym^{\epsilon}_{n,[m]}(S):= [\FM^{\epsilon}_{[n+m]}(S)/\Sigma_n],
\]
where $\Sigma_n$ acts on the first $n$ points. 
These spaces also admit blow-up description by \cite{Rou}. Such a blow-up description is currently missing for $\Hilb^{\epsilon}_{n,[m]}(S)$. 
\subsection{Orbifold classes} As for moduli spaces of $\epsilon$-weighted subschemes, we will construct orbifold classes on $\Sym^{\epsilon}_{n,[m]}(S)$ using the $\mathcal{FM}_{[m]}(S)$-relative versions of the inertia stack.  Consider the $\mathcal{FM}_{[m]}(S)$-relative inertia stack of $\Sym^{\epsilon}_{n,[m]}(S)$, 
\[
\CI\Sym^{\epsilon}_{n,[m]}(S):= \Sym^{\epsilon}_{n,[m]}(S) \times_{\Delta} \Sym^{\epsilon}_{n,[m]}(S),
\]
where 
\[ 
\Delta\colon  \Sym^{\epsilon}_{n,[m]}(S) \rightarrow \Sym^{\epsilon}_{n,[m]}(S)\times_{\mathcal{FM}_{[m]}(S)} \Sym^{\epsilon}_{n,[m]}(S). 
\]
is the $\mathcal{FM}_{[m]}(S)$-relative diagonal. The relative inertia stack parametrises pairs 
\[ 
(A,\alpha), 
\] 
where $A$ is an object in the stack $\Sym^{\epsilon}_{n,[m]}(S)$ (viewed as a fibered category), and $\alpha$ is an automorphism of $A$, such that $p(\alpha)=\mathrm{id}$ for the natural projection morphism   
\[
p \colon \Sym^{\epsilon}_{n,[m]}(S) \rightarrow \mathcal{FM}_{[m]}(S),
\] 
as explained, for example, in \cite[Tag 050P]{stacks-project}. Since $p$ is representable  by schemes, such automorphisms of objects arise  through the action of $\Sigma_n$. Hence, we obtain a description similar to (\ref{Inertia}),

\[
\CI\Sym^{\epsilon}_{n,[m]}(S)= \coprod_{[\sigma]} [\Sym^{\epsilon}_{n,[m]}(S)^\sigma/C(\sigma)].
\]
where, for an element $\sigma \in \Sigma_n$, 
\begin{equation} \label{fixedlocus}
\Sym^{\epsilon}_{n,[m]}(S)^\sigma:= \{x \in \Sym^{\epsilon}_{[n+m]}(S) \mid \sigma \cdot x\cong x, \quad p(\sigma \cdot x\cong x)=\id \},
\end{equation}
i.e., it is the locus fixed by the group element $\sigma$ relatively to $\mathcal{FM}_{[m]}(S)$. In particular, it excludes points fixed by $\sigma$ only up to automorphisms of FM degenerations.

Moreover, there exists a projection morphism given by contracting bubbles of FM degenerations 
\[ 
\tau \colon \Sym^{\epsilon}_{n,[m]}(S)^\sigma \rightarrow S^{\ell(\sigma)},
\]
which preserves the action of the centraliser $C(\sigma)$. 
We then define orbifold classes associated to cohomologically weighted partitions as before, 
\begin{multline*}
\lambda_{\sS}(\mu)=\tau^* \bigg(\sum_{\rho \in C(\sigma)}\rho^*(\gamma_{i_1} \otimes\hdots \otimes \gamma_{i_{\ell(\mu)}} ) \bigg) \\
\in H^{*-2\mathrm{age}(\sigma)}([\Sym^{\epsilon}_{n,[m]}(S)^\sigma/C(\sigma)],\BC).
\end{multline*}
Remark \ref{classes}  also applies in this case: for effective classes $\gamma_{i_j}$, the  classes $\lambda_\sS(\mu)$ are closures of orbifold classes from the open dense subset of $\Sym^{\epsilon}_{n,[m]}(S)$ parametrising $\epsilon$-weighted points on $S$.  
\section{Maps to interpolating spaces}
\subsection{Moduli spaces of maps to interpolating spaces}
Both interpolating spaces $\Hilb^{\epsilon}_{n,[m]}(S)$ and  $\Sym^{\epsilon}_{n,[m]}(S)$ are smooth and proper. Hence,  the moduli spaces of stable maps to these spaces are also proper and carry perfect obstruction theories. 

First, for the exceptional curve class $E$ defined in Section \ref{exc}, we have
\[ 
\Mbar_{g,k}(\Hilb^{\epsilon}_{n,[m]}(S), dE). 
\]
Note that the interpolating spaces $\Hilb^{\epsilon}_{n,[m]}(S)$ are Deligne--Mumford stacks in general, hence a priori it is necessary to consider twisted maps of \cite{AGV}. However, as we explain below in Proposition \ref{Proprel}, the moduli spaces of maps in an exceptional curve class admit  $\mathcal{FM}_{[m]}(S)$-relative descriptions, which make twisted maps unnecessary.  

Similarly, we have 
\[
\Mbar_{g,k}(\Sym^{\epsilon}_{n,[m]}(S), 0),
\]
which also admits a  $\mathcal{FM}_{[m]}(S)$-relative description. In this case, as for $\Sym_n(S)$, it is still necessary to use twisted maps. 
\subsection{Relative description}\label{Relative} Let us now consider a different but related moduli problem of maps to $\Hilb^{\epsilon}_{n,[m]}(S)$ and $\Sym^{\epsilon}_{n,[m]}(S)$.  By the construction of $\Hilb^{\epsilon}_{n,[m]}(S)$ as a $\mathcal{FM}_{[m]}(S)$-relative moduli space of subschemes, it admits a non-proper representable morphism 
\[ 
p\colon \Hilb^{\epsilon}_{n,[m]}(S) \rightarrow \mathcal{FM}_{[m]}(S). 
\]
Let 
\[ 
\Mbar_{g,k}^{\mathsf{rel}}(\Hilb^{\epsilon}_n(S), dE)
\]
be the  $\mathcal{FM}_{[m]}(S)$-relative moduli space of stable maps to $\Hilb^{\epsilon}_{n,[m]}(S)$. Since $p$ is not proper, there is no a priori reason for it to be proper. However, as we show in Proposition \ref{Proprel}, it is isomorphic to $\Mbar_{g,k}(\Hilb^{\epsilon}_{n,[m]}(S), dE)$. The same holds for $\Mbar_{g,k}^{\mathsf{rel}}(\Sym^{\epsilon}_{n,[m]}(S), 0)$. We start with the lemma that justifies the name ``exceptional curve class" for $E$ on $\Hilb^{\epsilon}_{n,[m]}(S)$.   
\begin{lemma} \label{exceptionalcurve} Let
	\begin{align*}
	\pi \colon \Hilb^{\epsilon}_{n,[m]}(S) &\rightarrow  \Sym^{\epsilon} _{[n+m]}(S)/\Sigma_n \\
	(W,Z,\underline{p}) &\mapsto(W,  \mathrm{Supp}(Z),\underline{p})  
	\end{align*}
	be the Hilbert--Chow morphism; the support $\mathrm{Supp}(Z)$ is weighted by the lengths of subschemes. Then we have 
	\[ 
	\pi_*(E)=0
	\]
	for the exceptional curve class $E$. 
\end{lemma}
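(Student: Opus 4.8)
The plan is to realise $E$ as the class of an explicit rational curve $C \subset \Hilb^{\epsilon}_{n,[m]}(S)$ and to check that the Hilbert--Chow morphism $\pi$ contracts $C$ to a point. First I would unwind the definition $E = \lambda_{\sH}(\mu)$ for $\mu = ((2,\mathrm{pt}),(1,\mathrm{pt}),\dots,(1,\mathrm{pt}))$ through the nested construction of Section~\ref{epsilonNak}, writing $E = p_*\big(q^*\alpha \cap [\Hilb^{\epsilon}_{\mu,[m]}(S)]\big)$ with $\alpha = \mathrm{pt}\otimes\cdots\otimes\mathrm{pt} \in H^*(S^{n-1})$. Capping with $q^*\alpha$ constrains the contracted supports of the successive quotients to a fixed tuple of generic distinct points $(x_1,\dots,x_{n-1}) \in S^{n-1}$, so that the generic fibre $C' := q^{-1}(x_1,\dots,x_{n-1})$ represents the capped cycle. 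Along $C'$ the FM degeneration is $W = S$ (the open locus of Remark~\ref{classes}), the markings $\underline{p}$ and the reduced points $x_2,\dots,x_{n-1}$ are fixed, and the only modulus is the length-$2$ structure at $x_1$, which ranges over $\mathbb{P}(T_{x_1}S) \cong \mathbb{P}^1$. Thus $E$ is represented, up to a positive multiple, by the image curve $C = p(C')$.

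For the contraction, the projection formula gives
\[
\pi_* E = \pi_* p_*\big(q^*\alpha \cap [\Hilb^{\epsilon}_{\mu,[m]}(S)]\big) = (\pi\circ p)_*\big(q^*\alpha \cap [\Hilb^{\epsilon}_{\mu,[m]}(S)]\big),
\]
so it suffices to show that $\pi\circ p$ is constant on $C'$. For each member of the family the total subscheme $Z$ has weighted support $2\cdot x_1 + x_2 + \cdots + x_{n-1}$, independent of the infinitesimal direction of the length-$2$ part; combined with the fixed data $(W,\underline{p}) = (S,\underline{p})$ this shows that the image $(S,\, 2x_1 + x_2 + \cdots + x_{n-1},\, \underline{p})$ is a single point of $\Sym^{\epsilon}_{[n+m]}(S)/\Sigma_n$. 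Hence $\pi$ maps $C$ to a point; the pushforward of a $1$-cycle contracted to a point vanishes in $H_2$, and therefore $\pi_* E = 0$.

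I expect the only delicate point to be the weighting bookkeeping together with the absence of FM bubbling along $C$. Concretely, one must verify that ``support weighted by lengths'' yields the constant configuration $2x_1 + x_2 + \cdots + x_{n-1}$ for every member of the family --- so that the length-$2$ subscheme always contributes multiplicity exactly $2$ at the single point $x_1$ --- and that $C'$ lies in the open locus where $W = S$. Both are governed by condition~(3) of Definition~\ref{defnweight}: a length-$2$ subscheme may be concentrated at a point of $S$ precisely when $1/\epsilon \ge 2$, which is the regime in which the geometric description of $E$ in Section~\ref{exc} is valid, and there the weighted support is manifestly constant along $\mathbb{P}(T_{x_1}S)$. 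Outside this range the representative of $E$ degenerates onto an FM bubble over $x_1$; there I would re-examine the weighted support, now read on the bubble and modulo its automorphisms, and confirm that the associated class is again contracted by $\pi$ (or vanishes), so that $\pi_* E = 0$ holds for every admissible $\epsilon$.
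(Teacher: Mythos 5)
Your argument is correct, but it reaches the conclusion by a genuinely different route than the paper. You show directly that the Hilbert--Chow image of the representative $\p^1\cong\p(T_{x_1}S)$ is the single weighted configuration $(S,\,2x_1+x_2+\cdots+x_{n-1},\,\underline{p})$, since the weighted support of a punctual length-$2$ subscheme forgets the tangent direction; the pushforward of a cycle contracted to a point then vanishes. The paper instead invokes Routis's presentation of the $\BQ$-divisor classes of $\Sym^{\epsilon}_{[n+m]}(S)$ as $\mathrm{Div}(S^{n+m})\oplus\langle D_I\rangle$, checks that the same representative $\p^1$ meets the $\pi$-pullback of every effective divisor in the $\Sigma_n$-invariant part trivially (the $D_I$ are supported on nontrivial FM degenerations, and the divisors from $\mathrm{Div}(S^{n+m})$ on loci where one of the $n+m$ moving points lies on a divisor of $S$), and concludes by duality. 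Your version is more elementary --- it needs only the definition of the weighted support rather than the divisor theory of the Hassett--Routis spaces --- and it isolates the geometric reason all of the paper's intersection numbers vanish; the paper's version buys independence from having to identify the image configuration explicitly. On the case you flagged, $1/2<\epsilon\le 1$: condition (3) of Definition~\ref{defnweight} forces $\ell_x(Z)\le 1$ at every point of every component of $W$, bubbles included, so the nested Hilbert scheme for a partition containing the part $(2,\mathrm{pt})$ supported at a single point is empty, $E=0$, and the lemma is vacuous there; that closes the case more cleanly than re-examining bubbled representatives.
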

\begin{proof}
	By \cite{Rou}, the $\BQ$-divisor classes of $\Sym^{\epsilon} _{[n+m]}(S)$ are given by 
	\[ 
	\mathrm{Div}(S^{n+m}) \oplus \langle D_I \rangle,
	\]
	where $\mathrm{Div}(S^n)$ are divisors on $S^{n+m}$, and $ \langle D_I \rangle$ is the linear span of boundary divisors $D_I$ associated with $I \subseteq \{1, \hdots, n+m\}$ satisfying the $\epsilon$-stability. For $\Sym^{\epsilon} _{[n+m]}(S)/\Sigma_n$, we take the $\Sigma_n$-invariant part of the vector space above.
	
	 The exceptional class $E$ can be represented by $\p^1$, which parametrises length $2$ non-reduced structures on a generic point in $S$, while other $n+m-1$ points are fixed and contained in $S$. Such curve does not intersect effective divisors from $\pi^*(\langle D_I \rangle^{\Sigma_n})$, because they can be represented by loci parametrising non-trivial marked FM degenerations of $S$. Similarly,  such a curve does not intersect effective divisors from $\pi^*(\mathrm{Div}(S^{n+m})^{\Sigma_n})$, as they can be represented by the loci parametrising $n+m$  moving points, such that one is contained in an effective divisor on $S$. We conclude that $\pi_*(E)$ must be zero.    
\end{proof}	

\begin{proposition} \label{Proprel}
	The natural morphisms 
	\[ 
	 \Mbar_{g,k}^{\mathsf{rel}}(\Hilb^{\epsilon}_{n,[m]}(S), dE) \rightarrow \Mbar_{g,k}(\Hilb^{\epsilon}_{n,[m]}(S), dE)
	\]
	\[
	\Mbar_{g,k}^{\mathsf{rel}}(\Sym^{\epsilon}_{n,[m]}(S), 0) \rightarrow \Mbar_{g,k}(\Sym^{\epsilon}_{n,[m]}(S), 0)
	\]
	are isomorphisms. 

\end{proposition}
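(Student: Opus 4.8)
The plan is to reduce both assertions to one geometric fact: a stable map in the class $dE$ (respectively, in the zero class on the symmetric product) becomes constant after composition with the projection $p$ to $\mathcal{FM}_{[m]}(S)$, and therefore factors through a single fibre of $p$, on which the relative and the absolute moduli problems visibly coincide.

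First I would note that the Hilbert--Chow morphism $\pi$ of Lemma \ref{exceptionalcurve}, followed by the projection of its target to $\mathcal{FM}_{[m]}(S)$ that remembers only the underlying FM degeneration, equals $p$. Lemma \ref{exceptionalcurve} gives $\pi_*(E)=0$, hence $p_*(E)=0$; on the symmetric-product side the class is already zero, so in both cases $p$ is numerically trivial on the relevant curves. Thus for any stable map $f\colon C \to \Hilb^{\epsilon}_{n,[m]}(S)$ of class $dE$ the composite $g=p\circ f$ carries a numerically trivial curve class.

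The central step is to promote this numerical triviality to the constancy of $g$. Because $\mathcal{FM}_{[m]}(S)$ is neither separated nor proper, one cannot simply pull back an ample class, so I would argue along the stratification of $\mathcal{FM}_{[m]}(S)$ by the dual tree of the FM degeneration. The computation in the proof of Lemma \ref{exceptionalcurve} shows that $E$ meets none of the pullbacks of the boundary divisors $D_I$; hence $g$ cannot pass between strata and remains inside the locally closed locus of a single degeneration type. Within such a stratum the degeneration is rigidified by the positions of the markings $\underline{p}$ and of the contracted support, all of which stay fixed along $E$ by its geometric description in Section \ref{exc}. This forces $g$ to be constant, and the conclusion spreads over an arbitrary base $T$ by the rigidity of numerically trivial families: since $p$ is representable, the induced map $C_T \to \mathcal{FM}_{[m]}(S)$ is constant on the fibres of $C_T \to T$ and therefore descends to a map $T \to \mathcal{FM}_{[m]}(S)$.

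Granting the factorization through a point $w$, the map $f$ lands in the fibre $p^{-1}(w)$, which is a closed subscheme of $\Hilb_n(W)$ for the fixed FM degeneration $W$ and hence proper; on maps into a single fibre relative stability over $\mathcal{FM}_{[m]}(S)$ and absolute stability agree, since no component dominates the base. I would then read this off at the level of functors of points: a $T$-family of absolute stable maps of class $dE$ yields, via the factorization of $g$, a morphism $T \to \mathcal{FM}_{[m]}(S)$ together with a relatively stable map into the pulled-back fibre, producing an inverse to the natural morphism $\Mbar_{g,k}^{\mathsf{rel}} \to \Mbar_{g,k}$; naturality makes the two constructions mutually inverse. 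The symmetric-product statement follows verbatim, with degree-zero twisted stable maps in place of maps of class $dE$ and the orbifold fibre $\Sym_n(W)$ in place of $\Hilb_n(W)$. The main obstacle is precisely the constancy of $g$: the non-separatedness of $\mathcal{FM}_{[m]}(S)$ makes numerical triviality strictly weaker than constancy, and it is in bridging this gap that the geometry of the exceptional class $E$ enters essentially.
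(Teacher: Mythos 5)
Your overall architecture matches the paper's: reduce everything to showing that $p\circ f$ is constant for any stable map $f\colon C\to \Hilb^{\epsilon}_{n,[m]}(S)$ of class $dE$, then produce the map $B\to\mathcal{FM}_{[m]}(S)$ by choosing a section on an \'etale cover and descending. The descent step and the reduction are fine. The gap is in your central step. You already observe that $p$ factors as the Hilbert--Chow morphism $\pi$ followed by the projection $\Sym^{\epsilon}_{[n+m]}(S)/\Sigma_n\to\mathcal{FM}_{[m]}(S)$, and that Lemma \ref{exceptionalcurve} gives $\pi_*(E)=0$. At that point you should stop: $\Sym^{\epsilon}_{[n+m]}(S)/\Sigma_n$ is \emph{projective}, so a connected proper curve mapping there with numerically trivial image class is contracted to a point (pair with an ample divisor); hence $\pi\circ f$ is constant and a fortiori $p\circ f$ is constant. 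This is exactly how the paper argues. Instead, you push the class all the way down to the non-proper, non-separated stack $\mathcal{FM}_{[m]}(S)$, where numerical triviality genuinely does not imply constancy, and then try to bridge that gap by hand.

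Your proposed bridge does not hold up. First, the vanishing $E\cdot \pi^*D_I=0$ does not imply that a curve in class $dE$ is disjoint from the boundary: an irreducible curve \emph{contained} in a divisor can have zero (or negative) intersection number with it, and curves in class $dE$ can certainly lie over non-trivial FM degenerations, i.e.\ inside $\pi^{-1}(D_I)$. So the claim that $g$ ``cannot pass between strata'' is not justified by the intersection computation. Second, and more seriously, your rigidification within a stratum appeals to ``the geometric description of $E$ in Section \ref{exc}'' — but that describes one explicit representative of the class $E$, whereas the proposition must be proved for \emph{every} stable map in the class $dE$. Controlling the positions of the markings and of the support for an arbitrary such map again requires a numerical argument (pairing with divisors pulled back from $S^{n+m}$) together with properness of the target — which is precisely the content of Lemma \ref{exceptionalcurve} applied on the proper space $\Sym^{\epsilon}_{[n+m]}(S)/\Sigma_n$. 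In short: the obstacle you correctly identify at the end of your proposal is real, but it is circumvented by not passing beyond the proper intermediate target, not by a stratification argument on $\mathcal{FM}_{[m]}(S)$.
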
	
\begin{proof} Let us start with $\Hilb^{\epsilon}_{n,[m]}(S)$, the argument presented below also applies to $\Mbar_{g,k}(\Sym^{\epsilon}_{n,[m]}(S), 0)$.  First, recall that the moduli problem of $\Mbar_{g,k}(\Hilb^{\epsilon}_{n,[m]}(S), dE)$ is given by flat families of curves $\CC$ over a base scheme $B$ with a map $f$ to $\Hilb^{\epsilon}_{n,[m]}(S)$, 
	
	\begin{equation*}
		\begin{tikzcd}[row sep = scriptsize, column sep = scriptsize]
			& \CC \arrow[d]  \arrow[r,"f"] &  \Hilb^{\epsilon}_{n,[m]}(S) \\
			&  B &  
		\end{tikzcd}
	\end{equation*}
	
	While the one of $\Mbar_{g,k}^{\mathsf{rel}}(\Hilb^{\epsilon}_{n,[m]}(S), dE)$ is given by the same data together with a map  from $B$ to $\mathcal{FM}_{[m]}(S)$, such that the square commutes, 
	
	\begin{equation*}
		\begin{tikzcd}[row sep = scriptsize, column sep = scriptsize]
			& \CC \arrow[d]  \arrow[r,"f"] &  \Hilb^{\epsilon}_{n,[m]}(S) \arrow[d,"p"] \\
			&  B \arrow[r]&  \mathcal{FM}_{[m]}(S)
		\end{tikzcd}
	\end{equation*}
	\noindent The natural morphism between two moduli spaces is given by forgetting the map from $B$ to $\mathcal{FM}_{[m]}(S)$,
	\begin{equation} \label{morphism}
		 \Mbar_{g,k}^{\mathsf{rel}}(\Hilb^{\epsilon}_{n,[m]}(S), dE) \rightarrow \Mbar_{g,k}(\Hilb^{\epsilon}_{n,[m]}(S), dE).
	\end{equation} 
	To construct the inverse of this morphism, we have to exhibit a canonical map from $B$ to $\mathcal{FM}_{[m]}(S)$ which commutes with $f$ for every family $f \colon \CC \rightarrow \Hilb^{\epsilon}_{n,[m]}(S)$ in a class $dE$.  To achieve this,  it is enough to show that every map $f$ from $C$ in the class $dE$ is contained in the fiber of $p$, or, in other words, $p \circ f$ is constant.    Indeed, in this case, we can pick a section $s$ of $\CC \rightarrow B$ on some \'etale cover $U$ of $B$, which then defines a map
	\[ 
	f \circ p\circ s\colon U \rightarrow \mathcal{FM}_{[m]}(S),
	\]
	and since $f$ is contained in the fibers of $p$, this map is independent of the section $s$, hence it descends to a map from $B$, 
	\[
	B \rightarrow  \mathcal{FM}_{[m]}(S),
	\] 
	which commutes with $f$. 
	This construction provides a morphism 
	\[
	 \Mbar_{g,k}(\Hilb^{\epsilon}_{n,[m]}(S), dE) \rightarrow  \Mbar_{g,k}^{\mathsf{rel}}(\Hilb^{\epsilon}_{n,[m]}(S), dE),
	\]
	 which is clearly an inverse of (\ref{morphism}). 
	
	It therefore remains to show that every map $f$ from a curve $C$ in the class $dE$ is contained in a fiber of $p$.  The morphism $p$ factors through $\Hilb^{\epsilon} _{[n+m]}(S)/\Sigma_n$,
	
		\begin{equation*}
		\begin{tikzcd}[row sep = scriptsize, column sep = scriptsize]
			& \Hilb^{\epsilon}_{n,[m]}(S) \arrow[d,"p"] \arrow[r,"\pi"] &  \Sym^{\epsilon} _{[n+m]}(S)/\Sigma_n \arrow[dl]\\
			&  \mathcal{FM}_{[m]}(S) &  
		\end{tikzcd}
	\end{equation*}
 While the composition
	\[ 
	 C \rightarrow \Hilb^{\epsilon}_{n,[m]}(S) \rightarrow  \Sym^{\epsilon} _{[n+m]}(S)/\Sigma_n,
	\]
is trivial by Lemma \ref{exceptionalcurve}, i.e., it contracts $C$ to a point. We conclude that $f$ must be contained in the fiber of $p$.  

For $\Sym^{\epsilon}_{n,[m]}(S)$, the claim follows immediately by the same argument, since the curve class is assumed to be $0$.  
	\end{proof}

The proceeding result can be interpreted in two ways: 
\begin{itemize}
	\item[(1)]  the absolute moduli space $\Mbar_{g,k}(\Hilb^{\epsilon}_{n,[m]}(S), dE)$ can be equipped with the $\mathcal{FM}_{[m]}(S)$-relative obstruction theory, 
	\item[(2)] the relative moduli space $\Mbar_{g,k}^{\mathsf{rel}}(\Hilb^{\epsilon}_{n,[m]}(S), dE)$ is proper.
\end{itemize}	
The $\mathcal{FM}_{[m]}(S)$-relative perspective will be necessary for the master space techniques in Section \ref{Mapsmaster}. 

\begin{corollary} Let  $T_p$ be the $p$-relative tangent bundle of $\Hilb^{\epsilon}_{n,[m]}(S)$, $\pi \colon \CC \rightarrow \Mbar_{g,k}(\Hilb^{\epsilon}_{n,[m]}(S), dE)$  be the universal curve, and $F$ be the universal map. Then  there exits a morphism 
	\[
	\Mbar_{g,k}(\Hilb^{\epsilon}_{n,[m]}(S), dE) \rightarrow \mathcal{FM}_{[m]}(S),
	\] 
	such that $R\pi_*F^*(T_p)$ defines a perfect relative obstruction theory. Moreover, the natural morphism between two absolute obstruction theories,
	\[ 
	\mathrm{cone}\left( \BT_{\mathcal{FM}_{[m]}(S)}[-1]  \rightarrow  R\pi_*F^*(T_p)\right)
	 \rightarrow R\pi_*F^*(T_{\Hilb^{\epsilon}_{n,[m]}(S)})  
	\]  
	is an isomorphism. The same holds for $\Mbar_{g,k}(\Sym^{\epsilon}_{n,[m]}(S), 0)$. 
\end{corollary}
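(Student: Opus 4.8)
The plan is to deduce everything from the relative description of Proposition \ref{Proprel} together with the relative tangent sequence of the morphism $p$. First I would produce the morphism to $\mathcal{FM}_{[m]}(S)$. By Proposition \ref{Proprel} the absolute moduli space is isomorphic to the relative one, which carries a tautological map to $\mathcal{FM}_{[m]}(S)$; concretely, the proof of that proposition shows that $p\circ F$ is constant along the fibres of the universal curve $\pi$, since each fibre is sent into a single fibre of $p$ (its class being contracted, by Lemma \ref{exceptionalcurve}). Hence $p\circ F$ descends to a morphism
\[
g\colon \Mbar_{g,k}(\Hilb^{\epsilon}_{n,[m]}(S), dE) \to \mathcal{FM}_{[m]}(S),
\]
and by construction $p\circ F = g\circ \pi$. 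This factorisation is the feature I would exploit repeatedly.

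Next I would verify that $R\pi_*F^*(T_p)$ is a perfect relative obstruction theory for $g$. Since $\Hilb^{\epsilon}_{n,[m]}(S)$ and $\mathcal{FM}_{[m]}(S)$ are smooth and $p$ is representable and smooth, $T_p$ is locally free, so $R\pi_*F^*(T_p)$ is perfect of amplitude $[0,1]$, being the pushforward of a vector bundle along a family of curves. That it is a relative obstruction theory is the standard Behrend--Fantechi deformation theory of maps into the smooth fibres of $p$: the relative moduli interpretation of Proposition \ref{Proprel} identifies the moduli space with fibrewise maps into these fibres, and the properness proved there guarantees the relative theory is honest. The same applies verbatim to $\Sym^{\epsilon}_{n,[m]}(S)$ in curve class $0$.

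For the comparison I would invoke the relative tangent sequence of the smooth morphism $p$,
\[
0 \to T_p \to T_{\Hilb^{\epsilon}_{n,[m]}(S)} \xrightarrow{dp} p^*\BT_{\mathcal{FM}_{[m]}(S)} \to 0,
\]
a short exact sequence of vector bundles. Pulling back along $F$ preserves exactness on $\CC$, and applying $R\pi_*$ yields a distinguished triangle
\[
R\pi_*F^*(T_p) \to R\pi_*F^*(T_{\Hilb^{\epsilon}_{n,[m]}(S)}) \to R\pi_*F^*(p^*\BT_{\mathcal{FM}_{[m]}(S)}) \xrightarrow{+1}.
\]
Rotating exhibits $R\pi_*F^*(T_{\Hilb^{\epsilon}_{n,[m]}(S)})$ as the cone of the boundary map out of $R\pi_*F^*(p^*\BT_{\mathcal{FM}_{[m]}(S)})[-1]$, which is the asserted isomorphism once the third term is identified with $\BT_{\mathcal{FM}_{[m]}(S)}$. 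Here the factorisation $p\circ F=g\circ\pi$ enters: $F^*p^*\BT_{\mathcal{FM}_{[m]}(S)} = \pi^*g^*\BT_{\mathcal{FM}_{[m]}(S)}$ is pulled back from the base, so by the projection formula its pushforward is controlled by $g^*\BT_{\mathcal{FM}_{[m]}(S)}$, and the boundary map is the one induced by $g$ and the differential $dp$ — matching the functoriality of obstruction theories under the smooth morphism $g$.

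The main obstacle I anticipate is precisely this last identification: pinning down that the $\mathcal{FM}_{[m]}(S)$-direction of the absolute theory is the one induced through $\pi^*$, and that the boundary map of the pushforward triangle coincides with the Behrend--Fantechi comparison map for $g$ rather than some a priori different connecting homomorphism. One must be careful that $R\pi_*\pi^*g^*\BT_{\mathcal{FM}_{[m]}(S)}$ is read off correctly from $R\pi_*\CO_{\CC}$, so that the connecting complex is genuinely the pullback $\BT_{\mathcal{FM}_{[m]}(S)}$ appearing in the statement. A prerequisite is the smoothness of $p$ (equivalently, that $T_p$ is a bona fide bundle), which I would take from the construction of $\Hilb^{\epsilon}_{n,[m]}(S)$ as a smooth relative Hilbert scheme in \cite{NHilb}. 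The $\Sym^{\epsilon}_{n,[m]}(S)$ case is formally identical: Proposition \ref{Proprel} again supplies $g$ and the factorisation, the relative tangent sequence of $p\colon\Sym^{\epsilon}_{n,[m]}(S)\to\mathcal{FM}_{[m]}(S)$ produces the analogous triangle, and the only new feature is that the maps are twisted, which leaves the formal structure of the argument unchanged.
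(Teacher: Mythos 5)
Your construction of the morphism to $\mathcal{FM}_{[m]}(S)$ and your verification that $R\pi_*F^*(T_p)$ is a perfect relative obstruction theory match the paper (Proposition \ref{Proprel} plus smoothness of $p$). The divergence — and the genuine gap — is in the second claim. You propose to apply $R\pi_*F^*$ to the relative tangent sequence of $p$, identify the third term $R\pi_*F^*(p^*\BT_{\mathcal{FM}_{[m]}(S)})$ with $\BT_{\mathcal{FM}_{[m]}(S)}$, and then rotate the triangle to read off the cone identification. But that identification fails: since $F^*p^*\BT_{\mathcal{FM}_{[m]}(S)}=\pi^*g^*\BT_{\mathcal{FM}_{[m]}(S)}$ is pulled back from the base of the curve, the projection formula gives
\[
R\pi_*F^*\bigl(p^*\BT_{\mathcal{FM}_{[m]}(S)}\bigr)\;\cong\; g^*\BT_{\mathcal{FM}_{[m]}(S)}\otimes R\pi_*\CO_{\CC},
\]
and $R\pi_*\CO_{\CC}$ is not $\CO$ once the genus is positive — it carries the extra piece $R^1\pi_*\CO_{\CC}$ (the dual Hodge bundle). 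So the rotated triangle exhibits $R\pi_*F^*(T_{\Hilb^{\epsilon}_{n,[m]}(S)})$ as a cone on $\bigl(g^*\BT_{\mathcal{FM}_{[m]}(S)}\otimes R\pi_*\CO_{\CC}\bigr)[-1]$, not on $\BT_{\mathcal{FM}_{[m]}(S)}[-1]$, and the two differ. You flag exactly this as ``the main obstacle,'' but flagging it is not resolving it: the rotation argument by itself cannot produce the statement.

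The paper sidesteps this by using only the unit map $\BT_{\mathcal{FM}_{[m]}(S)}\rightarrow R\pi_*\pi^*g^*\BT_{\mathcal{FM}_{[m]}(S)}$ to build a morphism of distinguished triangles, hence a morphism between the two cones — this is the ``natural morphism between two absolute obstruction theories'' in the statement — and then proves that this morphism is an isomorphism by an entirely separate argument: both sides are perfect obstruction theories of the same virtual dimension, and they agree over the dense open locus of maps landing in subschemes supported on $S$ itself (where $W=S$ and the relative and absolute theories coincide). Your proposal is missing this second step altogether; to repair it you would need to supply either that comparison-over-a-dense-locus argument or some other mechanism that kills the $R^1\pi_*\CO_{\CC}$ discrepancy, rather than asserting that the connecting term ``is genuinely the pullback $\BT_{\mathcal{FM}_{[m]}(S)}$.''
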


\begin{proof} The first claim is an immediate consequence of Proposition \ref{Proprel} and the smoothness of $p$.  
	
	For the second claim, the morphism between two obstruction theories is constructed  by applying the functor  $R\pi_*F^*(\hdots)$ to the canonical distinguished triangle 
	\[
	\BT_{\mathcal{FM}_{[m]}(S)}[-1]\rightarrow T_p \rightarrow T_{\Hilb^{\epsilon}_{n,[m]}(S)}\rightarrow ,
	\] 
	and using the natural morphism $\BT_{\mathcal{FM}_{[m]}(S)} \rightarrow  R\pi_*F^*(\BT_{\mathcal{FM}_{[m]}(S)})$, where we suppress pullbacks by projections to $\mathcal{FM}_{[m]}(S)$. Then, the result follows from the fact that both absolute obstruction theories are perfect and of the same virtual dimension, which  can be computed on the  locus of maps to the locus of points supported on $S$, over which the relative and absolute obstruction theories agree. 
\end{proof}	

 Let
\[ 
[\Mbar_{g,k}(\Hilb^{\epsilon}_{n,[m]}(S),dE)]^{\mathrm{vir}} \quad \text{ and } \quad [\Mbar_{g,k}(\Sym^{\epsilon}_{n,[m]}(S),0)]^{\mathrm{vir}} 
\]
be the virtual fundamental classes associated with moduli spaces of maps to interpolating spaces. By the preceding results, these classes can be defined either via the absolute or relative obstruction theories. 
\subsection{Invariants}
%Let us denote the invariants to these moduli spaces of maps as follows, 
%\begin{align*}
%	&\langle \lambda_1, \dots, \lambda_m \rangle^{\sH,\epsilon}_{g,k} = \int_{[\Mbar_{g,m}(\FM^\epsilon_n(S),dE)]^\mathrm{vir}} \prod^m_{i=1} \ev_i^* \lambda_i \\
%	&\langle \lambda_1, \dots, \lambda_m\rangle^{\sS,\delta}_{g} =\int_{[\Mbar_{g,m}(\FM^\delta_n(S), 0)]^\mathrm{vir}} \prod^m_{i=1}\ev_i^*\lambda_i. 
%\end{align*}
%Observe that since $\FM^\epsilon_n(S)$ is Deligne--Mumford, classes $\lambda_i$ live in the orbifold cohomology of $\FM^\epsilon_n(S)$. However, we will be interested only in the untwisted sector of the orbifold cohomology, i.e., its equivariant cohomology, which is a direct summand of the orbifold cohomology. 
 The moduli spaces of maps admit evaluation morphisms given by evaluating a map at a marked point  of the source curve, 
 \begin{align*}
 &\ev_j \colon \Mbar_{g,k}(\Hilb^{\epsilon}_{n,[m]}(S),dE) \rightarrow \Hilb^{\epsilon}_{n,[m]}(S) \\
 &\ev_j \colon \Mbar_{g,k}(\Sym^{\epsilon}_{n,[m]}(S),0) \rightarrow \Sym^{\epsilon}_{n,[m]}(S).
 \end{align*}
 
By the $\mathcal{FM}_{[m]}(S)$-relative description from Proposition \ref{Proprel}, we also have projection morphisms to $S$, 
 \begin{align*}
 &\widetilde{\ev}_i \colon \Mbar_{g,k}(\Hilb^{\epsilon}_{n,[m]}(S),dE) \rightarrow \mathcal{FM}_{[m]}(S) \rightarrow S \\ 
  &\widetilde{\ev}_i \colon \Mbar_{g,k}(\Hilb^{\epsilon}_{n,[m]}(S),0) \rightarrow \mathcal{FM}_{[m]}(S) \rightarrow S,
 \end{align*}
where the second morphisms are given by applying contraction morphisms to the marked point $p_i$ on FM degenerations of $S$.  Using these morphisms, we define  the following invariants
 \begin{multline*}
 	\langle \lambda_1, \dots, \lambda_k \mid \Psi_1^{h_1}\gamma_1, \hdots, \Psi_m^{h_m}\gamma_m \rangle^{\sH_n,\epsilon}_{g,d} \\
 	= \int_{[\Mbar_{g,k}(\Hilb^{\epsilon}_{n,[m]}(S),dE)]^\mathrm{vir}} \prod^k_{j=1} \ev_j^* \lambda_j \prod^m_{i=1}  \Psi_i^{h_i}\widetilde{\ev}^*_i\gamma_i
\end{multline*} 	
\begin{multline*}
 	\langle \lambda_1, \dots, \lambda_k \mid \Psi_1^{h_1}\gamma_1, \hdots, \Psi_m^{h_m} \gamma_m \rangle^{\sS_n,\epsilon}_{g} \\ =\int_{[\Mbar_{g,k}(\Sym^{\epsilon}_{n,[m]}(S), 0)]^\mathrm{vir}} \prod^k_{j=1}\ev_j^*\lambda_j\prod^m_{i=1}  \Psi_i^{h_i} \widetilde{\ev}^*_i\gamma_i,
 \end{multline*}
where $\Psi_i$ are $\psi$-classes on $\mathcal{FM}_{[m]}(S)$ defined as in Section \ref{FMspaces}. 
\section{Master space}

\subsection{Definitions} Let $\epsilon_0 \in \BR_{>0}$ be a number such that 
\[
n_0:=1/\epsilon_0 \in \BN.
\]
 Denote $\epsilon_+$ and $\epsilon_-$ values close to $\epsilon_0$ from the right and left, 
\[ 
\epsilon_- < \epsilon_0 < \epsilon_+. 
\]
To such $\epsilon_0$, we can associate a master space, 
\[
\BM\FM_{n,[m]}^{\epsilon_0}(S),
\]
defined in \cite{NHilb}. It carries a $\BC^*$-action, such that both 
 $\Hilb_{n,[m]}^{\epsilon_+}(S)$ and $\Hilb_{n,[m]}^{\epsilon_-}(S)$  are $\BC^*$-fixed components, 
 \[
\Hilb_{n,[m]}^{\epsilon_+}(S), \Hilb_{n,[m]}^{\epsilon_-}(S) \subset \BM\FM_{n,[m]}^{\epsilon_0}(S)^{\BC^*}.
 \] 
  The master space is a relative moduli space of 0-dimensional subschemes over the space
  \[
  \BM\widetilde{\mathcal{FM}}_{[m]}(S,n),
  \] 
  which  is given by an inductive blow-up of $\mathcal{FM}_{[m]}(S,n)$, followed by taking the total space of a  projectivised vector bundle; see \cite[Section 10.2]{NHilb} fore more details. 
  
  Similarly, there are analogously defined master spaces 
 \[
 \BM\Sym_{n,[m]}^{\epsilon_0}(S),
 \]
 which relate $\Sym_{n,[m]}^{\epsilon_+}(S)$ and $\Sym_{n,[m]}^{\epsilon_-}(S)$; it is a relative moduli space of weighted points over $  \BM\widetilde{\mathcal{FM}}_{[m]}(S,n)$. 
 Both kinds of master spaces are smooth and proper. We refer to Section \ref{Masterfixed} for the full description of their $\BC^*$-fixed components. 

In \cite{NHilb}, these spaces were used to relate tautological intersection theories of $\FM_n(S)$, $\Hilb_n(S)$,  and $\Sym_n(S)$. In this work, we will use them to compare the Gromov--Witten invariants, 
\[ 
\langle \lambda_1, \dots, \lambda_k \rangle^{\sH}_{g,d} \quad \text{and} \quad \langle \lambda_1, \dots, \lambda_k\rangle^{\sS}_{g}, \]
by applying the wall-crossings associated to the variations of $\epsilon$. 

\subsection{Nakajima--Grojnowski classes on master spaces} \label{Nakajimamaster}
We can similarly define Nakajima--Grojnowski classes on the master space $\BM \Hilb^{\epsilon_0}_{n,[m]}(S)$ by using   $\BM\widetilde{\mathcal{FM}}_{[m]}(S,n)$-relative nested Hilbert schemes. 

For a cohomologically weighted partition $\mu$, let 
\[ 
\BM \Hilb^{\epsilon_0}_{\mu,[m]}(S)
\]
be the $  \BM\widetilde{\mathcal{FM}}_{[m]}(S,n)$-relative moduli space of flags of $0$-dimensional subschemes, 
\[
\underline{Z}=\{ \emptyset\subset  Z_1 \subset Z_2  \subset \hdots \subset Z_{\ell(\mu)} \},
\]
such that $Z_{\ell(\mu)}$ is $\epsilon_0$-weighted, and \[ 
\CO_{Z_{i}}/\CO_{Z_{i-1}}
\]
is a length $\mu_i$ sheaf supported on a single point. Using the correspondence, 
\begin{equation} \label{correspondence}
	\begin{tikzcd} [row sep = small, column sep = small]
		&  &  \BM \Hilb^{\epsilon_0}_{\mu,[m]}(S) \arrow[dl,"q"'] \arrow[dr, "p"] &\\
		& S^{\ell(\mu)}  &  &\BM \Hilb^{\epsilon_0}_{n,[m]}(S)
	\end{tikzcd}
\end{equation}
we define the Nakajima--Grojnowski classes 
\[ 
\lambda_{\sH}(\mu):=p_*(q^*(\gamma_{i_1}\otimes \hdots \otimes \gamma_{i_{\ell(\mu)}})\cap [\BM \Hilb^{\epsilon_0}_{\mu,[m]}(S)] ) \in H^*(\BM \Hilb^{\epsilon_0}_{n,[m]}(S)). 
\] 
On master spaces, the exceptional class is
\begin{equation}\label{masterclass}
E=\lambda_{\sH}(\mu) \quad \text{for} \quad \mu= ((2,\mathrm{pt}),(1,\mathrm{pt}), \hdots, (1, \mathrm{pt})).
\end{equation}
Note that this is a class of homological degree $3$, because   $\BM\widetilde{\mathcal{FM}}_{[m]}(S,n)$ is a $\p^1$-bundle over $  \widetilde{\mathcal{FM}}_{[m]}(S,n)$, and, in particular, the difference between the dimensions of fibers of $q$ in (\ref{correspondence}) and in (\ref{correspondence0}) is $1$. The same applies to other Nakajima--Grojnowski classes on master spaces. 
\subsection{Orbifold classes on master spaces} Orbifold classes can also be defined on $\BM\Sym^{\epsilon_0}_{n,[m]}(S)$ using the $\BM\mathcal{FM}_{[m]}(S,n)$-relative inertia stacks. Consider the $\BM\mathcal{FM}_{[m]}(S,n)$-relative inertia stack of $\BM\Sym^{\epsilon_0}_{n,[m]}(S)$, 
\[
\CI\BM\Sym^{\epsilon_0}_{n,[m]}(S):= \BM\Sym^{\epsilon_0}_{n,[m]}(S) \times_{\Delta} \BM\Sym^{\epsilon_0}_{n,[m]}(S),
\]
where 
\[ 
\Delta\colon  \BM\Sym^{\epsilon_0}_{n,[m]}(S) \rightarrow \BM\Sym^{\epsilon_0}_{n,[m]}(S)\times_{\mathcal{FM}_{[m]}(S,n)} \BM\Sym^{\epsilon_0}_{n,[m]}(S)
\]
is the $\BM\mathcal{FM}_{[m]}(S)$-relative diagonal. We have 
\[ 
\CI\BM\Sym^{\epsilon_0}_{n,[m]}(S)= \coprod_{[\sigma]} [\BM\Sym^{\epsilon_0}_{n,[m]}(S)^\sigma/C(\sigma)],
\]
where the fixed locus associated to a group element $\sigma$ is defined as in (\ref{fixedlocus}) for the projection morphism 
\[
p\colon \BM\Sym^{\epsilon_0}_{n,[m]}(S) \rightarrow \BM\mathcal{FM}_{[m]}(S,n). 
\] 
 We define the orbifold classes on master spaces as follows, 
\begin{multline*}
\lambda_{\sS}(\mu)=\tau^* \bigg(\sum_{\rho\in C(\sigma)}\rho^*(\gamma_{i_1} \otimes\hdots \otimes \gamma_{i_\ell} ) \bigg) \\
\in H^{*-2\mathrm{age}(\sigma)}([\BM\Sym^{\epsilon}_{n,[m]}(S)^\sigma/C(\sigma)],\BC),
\end{multline*}
where 
\[ 
\tau \colon \BM\Sym^{\epsilon}_{n,[m]}(S)^\sigma \rightarrow S^{\ell(\sigma)}
\]
is the contraction morphism which commutates with the action of $C(\sigma)$. 

\subsection{Fixed components of master spaces} \label{Masterfixed}
We review the $\BC^*$-fixed components of the master space, discussed in more detail in \cite[Section 10]{NHilb}, 
\begin{equation} \label{fixed}
	\BM \Hilb^{\epsilon_0}_{n,[m]}(X)^{\BC^*}=F_- \sqcup F_+ \sqcup \coprod_{h\geq 1} F_{h},
\end{equation} 
the exact descriptions of the components in the above decomposition are explained below; the same discussion applies  to $\BM\Sym^{\epsilon_0}_{n,[m]}(S)$. The fixed components of moduli spaces of maps to master spaces will be given an analogous description in Section \ref{mapsfixed}. 
\subsubsection{ Fixed component $F_-$} This component admits the following identification,  
\[
F_-= \Hilb^{\epsilon_-}_{n,[m]}(S), \quad
N_{F_-}^{\mathrm{vir}}=\bz^{-1}\otimes \BM^\vee_{\widetilde{\CF\CM}_{[m]}(S,n)},\]
where  $ \BM_{\widetilde{\CF\CM}_{[m]}(S,n)}$ is the calibration bundle  on $\widetilde{\CF\CM}_{[m]}(S,n)$ pulled back to $F_-$, and $\bz^{-1}$ is the weight $-1$ representation of $\BC^*$. 

\subsubsection{Fixed component $F_+$} We define
\[\widetilde{\Hilb}^{\epsilon_+}_{n,[m]}(S):=  \Hilb^{\epsilon_+}_{n,[m]}(S) \times_{\CF\CM_{[m]}(S,n)} \widetilde{\CF\CM}_{[m]}(S,n),\] 
then
\[
F_+=\widetilde{\Hilb}^{\epsilon_+}_{n,[m]}(S),\quad
N_{F_+}^{\mathrm{vir}}=\mathbf{z}\otimes \BM_{\widetilde{\CF\CM}_{[m]}(S,n)},
\]
where  $\BM_{\widetilde{\CF\CM}_{[m]}(S,n)}$ is the calibration bundle on  $\widetilde{\CF\CM}_{[m]}(S,n)$ pulled back to $F_+$, and $\mathbf{z}$ is the weight $1$ representation of $\BC^*$. 
\subsubsection{Fixed components $F_{h}$ and $Y_h$} \label{componentswall} Components $F_h$  account for the difference between $F_-$ and $F_+$. For the purposes of comparing Gromov--Witten invariants, we will need to describe bigger subloci $Y_h$ of $\BM \Hilb^{\epsilon_0}_{n,[m]}(X)$ which contain $F_h$. 

 By the description of $F_h$ in \cite[Section 10.3]{NHilb}, the projection morphism  
\[
F_h \rightarrow \BM\mathcal{FM}_{[m]}(S,n)
\] 
factors through the substack of calibrated FM degenerations with exactly $h$ entangled tails, and $v_1\neq0$ and $v_2 \neq 0$.  Let us denote the closed substack of such FM degenerations as follows, 
\[ 
\BM \CY_h^* \subset \BM\mathcal{FM}_{[m]}(S,n). 
\]
The restriction 
\[ 
\BM \Hilb^{\epsilon_0}_{n,[m]}(X)_{|\BM \CY_h^*}
\]
parametrises $\epsilon_0$-weighted subschemes on FM degenerations from $\BM \CY_h^*$. Let 
\[ 
Y_h \rightarrow \BM \Hilb^{\epsilon_0}_{n,[m]}(X)_{|\BM \CY_h^*}
\]
be given by introducing a marked point on each entangled end components of FM degenerations in $\BM \Hilb^{\epsilon_0}_{n,[m]}(X)_{|\BM \CY_h^*}$. Let 
\[
\widetilde{\mathrm{gl}}_h \colon \widetilde{\CF\CM}_{[m+h]}(S,n')\times_{S^h} \CB^h \rightarrow \widetilde{\CF\CM}_{[m]}(S,n)
\]
be the gluing morphism given by attaching the $h$ entangled end components, such that  $\CB$ is the moduli space of end components, as defined in \cite[Section 10.4]{NHilb}. Finally, let 
\[
\widetilde{\Hilb}^{\epsilon_+}_{n',[m+h]}(S)^{\frac{1}{h}}
\]
be the stack of $h$-roots of the calibration bundle $\BM_{\widetilde{\CF\CM}_{[m+h]}(S,n')}$ pulled back from $\widetilde{\CF\CM}_{[m]}(S,n')$.

\begin{lemma} \label{important} We have an isomorphism 
	\[
		\widetilde{\mathrm{gl}}^*_hY_h \cong  \widetilde{\Hilb}^{\epsilon_+}_{n',[m+h]}(S)^{\frac{1}{h}} \times_{S^h} \prod_{i=1}^{h}  \Hilb_{n_0}(T_S/S), 
\]
such that
\[
\widetilde{\mathrm{gl}}^*_hF_h\cong (\widetilde{\mathrm{gl}}^*_hY_h)^{\BC^*}\cong 	 \widetilde{\Hilb}^{\epsilon_+}_{n',[m+h]}(S)^{\frac{1}{h}} \times_{S^h} \prod_{i=1}^{h}  \Hilb_{n_0}(T_S/S)^{\BC^*},
\]
where the $\BC^*$-action on the right is given by scaling the total space of the tangent bundle $T_S$. 
	
	\end{lemma}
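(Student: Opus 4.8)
The plan is to prove both isomorphisms by unwinding the moduli-theoretic content of the two sides and exhibiting the canonical decomposition of an $\epsilon_0$-weighted subscheme on a $\widetilde{\mathrm{gl}}_h$-glued FM degeneration into its bulk part and its $h$ entangled tails. By construction, $Y_h$ parametrises $\epsilon_0$-weighted subschemes on the degenerations of $\BM\CY_h^*$ together with a marked point on each of the $h$ entangled end components; pulling back along $\widetilde{\mathrm{gl}}_h$ presents each such degeneration as a bulk degeneration in $\widetilde{\CF\CM}_{[m+h]}(S,n')$, carrying $h$ extra marked points at the gluing nodes, with $h$ end components attached. The whole content of the lemma is to read off which moduli problem each resulting piece solves, so I would set up mutually inverse morphisms governed by this splitting.

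First I would recall from \cite[Section 10]{NHilb} the precise form of $\BM\CY_h^*$ (the locus of calibrated degenerations with exactly $h$ entangled tails and $v_1\neq 0$, $v_2\neq 0$) and of the gluing map $\widetilde{\mathrm{gl}}_h\colon \widetilde{\CF\CM}_{[m+h]}(S,n')\times_{S^h}\CB^h \to \widetilde{\CF\CM}_{[m]}(S,n)$, whose fibre product over $S^h$ precisely encodes the matching of the $h$ gluing points. I would then decompose an $\epsilon_0$-weighted subscheme $Z$ on a glued degeneration as $Z'\sqcup Z_1 \sqcup \cdots \sqcup Z_h$, with $Z'$ supported on the bulk and $Z_i$ on the $i$-th tail. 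The entangled tails are exactly the end components carrying the critical length $n_0 = 1/\epsilon_0$; after detaching them, the support and length conditions of Definition~\ref{defnweight} restricted to the bulk become the $\epsilon_+$-stability conditions with the $h$ nodes as additional markings, so $Z'$ is an $\epsilon_+$-weighted subscheme of length $n'$ with $m+h$ markings, giving the factor $\widetilde{\Hilb}^{\epsilon_+}_{n',[m+h]}(S)$. Each tail, with its marked point and gluing node lying over a point $x\in S$, is the fibrewise projective completion $\BP(T_S\oplus\CO)$ at $x$, and a length-$n_0$ subscheme supported in the affine chart $T_{S,x}$ away from the divisor at infinity is exactly a point of the relative Hilbert scheme $\Hilb_{n_0}(T_S/S)$ over $x$; taking the product over the $h$ tails and fibering over $S^h$ yields $\prod_{i=1}^h \Hilb_{n_0}(T_S/S)$.

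The $h$-th root $(\cdot)^{\frac{1}{h}}$ is the one genuinely nontrivial bookkeeping point, and it arises from the calibration datum: the master space is built in \cite[Section 10.2]{NHilb} as a relative moduli space over a projectivised bundle formed from the calibration bundle $\BM_{\widetilde{\CF\CM}}$, and the simultaneous entanglement of $h$ tails ties their scaling parameters together through a single $\BC^*$, so that the calibration pulled back through $\widetilde{\mathrm{gl}}_h$ acquires the $h$-th root recorded in $\widetilde{\Hilb}^{\epsilon_+}_{n',[m+h]}(S)^{\frac{1}{h}}$; I would extract this directly from the construction of $\CB$ and $\widetilde{\mathrm{gl}}_h$ in \cite[Section 10.4]{NHilb}. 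Assembling the bulk factor, the tail factors, and the root structure gives the first isomorphism. For the second, I would observe that the $\BC^*$-action is trivial on the bulk, which is genuinely off the wall and hence $\epsilon_+$-stable and unscaled, while it acts by fibrewise scaling of $T_S$ on each tail; therefore $(\widetilde{\mathrm{gl}}^*_h Y_h)^{\BC^*}$ is obtained by replacing each $\Hilb_{n_0}(T_S/S)$ with its scaling-fixed locus $\Hilb_{n_0}(T_S/S)^{\BC^*}$, and comparison with the description of $F_h$ in \cite[Section 10.3]{NHilb} identifies this with $\widetilde{\mathrm{gl}}^*_h F_h$.

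The main obstacle I anticipate is not the splitting of the subscheme, which is immediate from the support and length conditions, but the careful matching of the calibration data and the rigorous justification of the $h$-th root structure under gluing, together with checking that the fibre-product over $S^h$ and the marked-point conventions agree exactly on the two sides. This is precisely where the combinatorics of entanglement and the full construction of $\widetilde{\mathrm{gl}}_h$, $\CB$, and the calibration bundle from \cite[Section 10]{NHilb} must be invoked; once these are in place, the $\BC^*$-fixed-point computation yielding the second isomorphism is a formal consequence.
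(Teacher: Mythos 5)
Your overall strategy---decompose the $\epsilon_0$-weighted subscheme into a bulk part and $h$ tail parts, identify the bulk with an $\epsilon_+$-weighted subscheme with $h$ extra markings and each tail with a point of $\Hilb_{n_0}(T_S/S)$, then read off the fixed locus---matches the shape of the paper's argument. But there is a genuine gap in how you identify the tail factors. An entangled end component is abstractly $\p(T_{S,x}\oplus\BC)$ \emph{up to its automorphisms}, which are generated by translations and by the $\BC^*$-scaling fixing the node and the divisor at infinity. The marked point introduced in the definition of $Y_h$ rigidifies only the translations; the scaling automorphism survives, so a point of $Y_h$ records the subscheme on each tail only up to scaling, whereas a point of $\Hilb_{n_0}(T_S/S)$ is an honest subscheme of the fibre $T_{S,x}$. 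Your sentence ``each tail \dots\ is the fibrewise projective completion $\BP(T_S\oplus\CO)$ at $x$'' silently assumes a canonical linear identification that does not exist without further rigidification, so the morphism you propose to the product is not well defined as stated.

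The paper resolves this by passing to an auxiliary space $Y'$ obtained by additionally choosing a tangent vector at each node (the total space of $\BL^\vee(\CD_i)$ minus the zero section), which does rigidify the scalings and makes the identification with $\p(T_S\oplus\BC)$ canonical; one then quotients by the resulting $(\BC^*)^h$-action, using the relation $(\lambda,t_1,\dots,t_h)\mapsto(\lambda^{1/h}t_1^{-1},\dots,\lambda^{1/h}t_h^{-1})$ between this torus and the master-space $\BC^*$, and checks via the calibration data (the inverse construction of \cite[Lemma 6.5.5]{YZ}) that the descended map is an isomorphism. This step is not bookkeeping: it is simultaneously the source of the $h$-th root of the calibration bundle, of the weight substitution $z\mapsto z/h+\psi(\CD_i)$ used in (\ref{weighttwist}) and throughout the localisation computations, and of the assertion---which you state without justification---that the master-space $\BC^*$-action becomes fibrewise scaling of $T_S$ on each tail factor. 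Without the rigidify-and-descend argument, both the first isomorphism and the identification of the fixed locus in the second part remain unproved.
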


\begin{proof}
	The $h$ entangled end components  of FM degenerations in 
	\[
	\widetilde{\mathrm{gl}}^*_h \BM \Hilb^{\epsilon_0}_{n,[m]}(S)_{|\BM \CY_h^*}
	\]  
	can be trivialised by introducing the tangent vector at their nodes and  a point away from the node. The former rigidifies the $\BC^*$-scaling automorphisms  of end components, while the latter rigidifies the translation automorphism. More precisely, this corresponds to taking the total space without the zero section of tangent line bundles $\BL^\vee(\CD_i)$ associated to end components parametrised by $\CB$ in 
	\[
 \widetilde{\CF\CM}_{[m+h]}(S,n')\times_{S^h} \CB^h
	\]
	and adding a marked point to each end component.  Let us denote the resulting space by $Y'$, 
	\[ 
	Y' \rightarrow \BM\Hilb^{\epsilon_0}_{\mu,[m]}(X)_{|\BM \CY_h^*}.
	\]
	Over $Y'$, each end component can be identified with a fiber of $\p(T_S\oplus \BC)$ by sending a tangent vector at the node to some fixed vector $v_\infty$ at the divisor at infinity of $\p(T_S) \subset \p(T_S\oplus \BC)$, and the marked point to the origin $0$. This induces a morphism, 
	\begin{equation} \label{morphismY}
		\widetilde{\mathrm{gl}}^*_hY' \rightarrow \widetilde{\Hilb}^{\epsilon_+}_{n',[m+h]}(S)^{\frac{1}{h}} \times_{S^h} \prod_{i=1}^{h}  \Hilb_{n_0}(T_S/S),
	\end{equation}
which to a $\epsilon_0$-weighted subscheme associates a subscheme on FM degenerations with trivialised end components.

The space $\widetilde{\mathrm{gl}}^*_hY'$ admits a $\BC^* \times (\BC^*)^h$-action, such that the first factor $\BC^*$ comes form the scaling action on the master space, while $(\BC^*)^h$ scales the tangent vectors at the nodes. The space on the right also admits a $(\BC^*)^h$-action given by scaling each factor  $\Hilb_{n_0}(T_S/S)$. The two actions are related by 
\begin{align}\label{torusaction}
	\begin{split}
	\BC^*\times (\BC^*)^h &\rightarrow (\BC^*)^h \\
	(\lambda, t_1, \hdots, t_h) &\mapsto (\lambda^{1/h}t_1^{-1},\hdots, \lambda^{1/h}t_h^{-1}),
	\end{split}
\end{align}
the detailed explanation of the relation between weights of two spaces is given in the proof of \cite[Lemma 6.5.6]{YZ}.  The quotient of $Y'$ by  $(\BC^*)^h$ removes the choice of the tangent vectors at the nodes of end components, and 
\[ 
\widetilde{\mathrm{gl}}^*_hY'/(\BC^*)^h=\widetilde{\mathrm{gl}}^*_hY
\]
After twisting the morphism (\ref{morphismY}) with the $(\BC^*)^h$-action, it descends to morphism, 
\begin{equation} \label{mor}
	\widetilde{\mathrm{gl}}^*_hY_h \rightarrow  \widetilde{\Hilb}^{\epsilon_+}_{n',[m+h]}(S)^{\frac{1}{h}} \times_{S^h} \prod_{i=1}^{h}  \Hilb_{n_0}(T_S/S), 
\end{equation}
by the arguments from \cite[Lemma 6.5.5]{YZ}, we can construct an inverse of the morphism above  
\[
\widetilde{\Hilb}^{\epsilon_+}_{n',[m+h]}(S)^{\frac{1}{h}} \times_{S^h} \prod_{i=1}^{h}  \Hilb_{n_0}(T_S/S) \rightarrow  \widetilde{\mathrm{gl}}^*_hY_h,
\]
which amounts to furnishing FM degenerations with calibrations.  This shows that (\ref{mor}) is in fact an isomorphism. 

 By the description of the $\BC^*\times (\BC^*)^h$-action, the relation between the $\BC^*$-fixed loci of two spaces is also clear. 
\end{proof}

By the construction of $Y'$ and the $(\BC^*)^h$-action, the weights $t_i$ of $(\BC^*)^h$ become 
\[
-\psi(\CD_i)=\mathrm{c}_1(\BL^\vee(\CD_i))
\] after taking the quotient of $Y'$ by $(\BC^*)^h$.  By the relation between torus actions (\ref{torusaction}), the weight $z$ of $\BC^*$ acting on $Y$  is therefore identified with 
\begin{equation} \label{weighttwist}
	z/h+\psi(\CD_i)
\end{equation}
on each factor $  \Hilb_{n_0}(T_S/S)$. The only difference between the $\BC^*$-localisation on $\widetilde{\mathrm{gl}}^*_h F_h$ and $\widetilde{\mathrm{gl}}^*_h\BM\Hilb^{\epsilon_0}_{\mu,[m]}(X)_{|\BM \CY^*_h}$  is the normal bundle of $\widetilde{\mathrm{gl}}^*_h F_h$: in the latter case, the factors corresponding to translations are removed. 

 %Then $\widetilde{\mathrm{gl}}_h^*F_{h}$ is isomorphic to the stack of $h$-roots of the calibration line bundle $\BM_{\widetilde{\CF\CM}_{[m+h]}(S,n')}$ on 
%\begin{equation*}
%  \widetilde{\Hilb}^{\epsilon_+}_{n',[m+h]}(S) \times_{S^h} \prod_{i=1}^{h} V_{n_0},
%\end{equation*}
%where 
%\[ 
%V_{n_0}: =\Hilb_{n_0}(T_S/S)^{\BC^*}\subset \Hilb_{n_0}(T_S/S),
%\]
%  Since  $\BM_{\widetilde{\CF\CM}_{[m+h]}(S,n')}$ is pulled back from $\widetilde{\CF\CM}_{[m]}(S,n')$, the space $\widetilde{\mathrm{gl}}_h^*F_{h}$ is, in fact, a fiber-product 
  
  	%\begin{equation} \label{fiberprod}
  %	\begin{tikzcd}
  	%	&\widetilde{\mathrm{gl}}_h^*F_{h}  \arrow[d]  \arrow[r] &  \widetilde{\Hilb}^{\epsilon_+}_{n',[m+h]}(S) \times_{S^h} \prod_{i=1}^{h} V_{n_0} \arrow[d] \\
  	%	&  \widetilde{\CF\CM}_{[m+h]}(S,n')^{\frac{1}{h}} \arrow[r]&  \widetilde{\CF\CM}_{[m+h]}(S,n')
  %	\end{tikzcd}
  %\end{equation}
 %where $\widetilde{\CF\CM}_{[m+h]}(S,n')^{\frac{1}{h}}$ is the stack of $h$-roots of $\BM_{\widetilde{\CF\CM}_{[m+h]}(S,n')}$. We will denotes this fiber product as 
%\[ 
% \widetilde{\Hilb}^{\epsilon_+}_{n',[m+h]}(S)^{\frac{1}{h}} \times_{S^h} \prod_{i=1}^{h} V_{n_0}. 
% \] 
%The normal bundles of $F_h$ are described in terms of the identification of $F_h$ with the fiber product above. 
 \subsubsection{Normal bundles of $Y_h$} We will not need to know the normal bundles of $F_h$, although their description will follow immediately  from  Lemma \ref{important} and the description of the normal bundles of $Y_h$ (by which we mean the pullbacks of normal bundles from $\BM \Hilb^{\epsilon_0}_{n,[m]}(S)_{|\BM \CY_h^*}$). We will  provide the description of the latter, as it will  be needed for Section \ref{mapsfixed}.

  The normal bundles of $Y_h$ are equal to those of 
  \[ 
  \BM \CY_h^*  \subset \BM\mathcal{FM}_{[m]}(S,n) 
  \] 
  by the construction of $Y_h$. They in turn are determined by \cite[Lemma 2.5.5]{YZ} (see also the proof of \cite[Lemma 6.5.6]{YZ}), taking the following form in the $K$-theory,  
  %the components $F_h$ can be determined by splitting the tangent bundle of the master space $\BM \Hilb^{\epsilon_0}_{n,[m]}(S)$ into the $\BM\widetilde{\CF\CM}_{[m]}(S,n)$-relative tangent bundle and the tangent bundle of $\BM\widetilde{\CF\CM}_{[m]}(S,n)$, 

\begin{multline} \label{normalbundle}
N_{  \BM \CY_h^* }=\bigg[ \bz^{-1/h}\otimes\BL^\vee(\CE_{m+1})\otimes \BL^\vee(\CD_{m+1}  ) \otimes\CO(-\sum^{\infty}_{i=h} \CY_i) \bigg] \\
- \bigg[\bigoplus^h_{i=1} (T_S \otimes \bz^{1/h} \otimes \BL(\CD_i))\bigg],
\end{multline}
such that $\bz^{1/h}$ is the $h$-root of $\bz$, which exists after passing to the stack of $h$-roots, $\CY_i$ are closures of loci of FM degenerations with $i+1$ entangled tails, and $\BL(\hdots)$ are cotangent line bundles defined in \cite[Sections 9.2 and 10.5]{NHilb}.  Here, the second summand corresponds to the translation automorphisms of entangled end components, while the first summand is the normal bundle of the divisor in $\widetilde{\CF\CM}_{[m]}(S,n)$ parametrising FM degenerations with $h$ entangled tails. 
%See \cite{} fore more details. 

 %On the other hand, the part of the normal bundle coming from the relative tangent bundle $\BT^{\mathrm{vir}}_{\BM \Hilb^{\epsilon_0}_{n,[m]}(S)/\BM\widetilde{\CF\CM}_{[m]}(S,n)}$ corresponds to
%\[ 
%N_2=\bigg[\bigoplus^k_{\ell=1} N_{V_{n_0}} (\bz^{1/h} \otimes \BL^\vee(\CD_i))\bigg],
%\]
%where $N_{V_{n_0}}(\bz)$ is the equivariant normal bundle of $\BC^*$-fixed component of $\Hilb_{n_0}(T_S/S)$, 
%\[ 
%V_{n_0} =\Hilb_{n_0}(T_S/S)^{\BC^*}\subset \Hilb_{n_0}(T_S/S),
%\]
%as defined in \cite{}. More precisely, 
%\[ 
%N_{V_{n_0}}(\bz):=\bigoplus_{w\in \BZ} N_{V_{n_0},w}\otimes \bz^w,
%\]
%such that $N_{V_{n_0},w}$ are its weight $w$ summands. Observe that $N_1$ and $\BL^\vee(\CD_i)$ are pullbacked  from $\widetilde{\CF\CM}_{[m]}(S,n)$.
 
 \section{Splitting of classes on master spaces}
 \subsection{Splitting of Nakajima--Grojnowski classes}
 
In this section, we will show how $\BC^*$-equivariant Nakajima--Grojnowski and orbifold classes restrict to the wall-crossing components $Y_h$ of the master spaces, considered in Section \ref{componentswall}. Let us start with the Nakajima--Grojnowski classes. 

By Lemma \ref{important}, it essentially amounts to showing how  a $\BC^*$-equivariant Nakajima--Grojnowski class  $\lambda_{\sH}(\mu)$  splits on the product 

\begin{equation} \label{themap}
 \widetilde{\Hilb}^{\epsilon_+}_{n',[m+h]}(S)^{\frac{1}{h}} \times_{S^h} \prod_{i=1}^{h}  \Hilb_{n_0}(T_S/S).
\end{equation}
If $\gamma_{i_j}=\mathbb{1}$ for all parts in a partition $\mu$, then the class $\lambda_{\sH}(\mu)$, up to the prefactor $\Aut(\mu)$, can be represented by  the locus of $0$-dimensional subschemes, whose multiplicity profile is given by the partition $(\mu_1,\hdots,\mu_{\ell(\mu)})$. Hence, on the product above, these multiplicities are simply distributed among its  factors. Next lemma makes this precise for all cohomologicaly weighted partitions. 
 \begin{lemma} \label{Nsplitting} We have
 	\[
 	\lambda_{\sH}(\mu)_{\mid \widetilde{\mathrm{gl}}_h^*Y_h}=\sum_{\mu^0, \mu^1, \hdots \mu^h} \lambda_{\mathsf{H}}(\mu^0)\cdot \prod^h_{i=1} \lambda_{\mathsf{H}}(\mu^i)(z/h+\psi(\CD_i),
 	\]
 	such that the sum is taken over all ordered decompositions of $\mu$, 
 	\[
 	\mu=\mu^0\sqcup \mu^1 \sqcup \hdots \sqcup \mu^h,
 	\]
 	into pairwise distinct and possible empty subpartitions $\mu^i \subseteq \mu$. The classes  $\lambda_{\sH}(\mu^0)$ are Nakajima--Grojnowski classes on $\widetilde{\Hilb}^{\epsilon_+}_{n',[m+h]}(S)$ pulled back to the stack of $h$-roots. While the classes  $\lambda_{\sH}(\mu^i)(z)$ are $\BC^*$-equivariant Nakajima--Grojnowski classes on $\Hilb_{n_0}(T_S/S)$. 
 	
 \end{lemma}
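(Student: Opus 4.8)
The plan is to compute the restriction directly from the nested-Hilbert-scheme definition of $\lambda_{\sH}(\mu)$ in Section~\ref{epsilonNak}, using the correspondence (\ref{correspondence}) and the product description of $\widetilde{\mathrm{gl}}_h^*Y_h$ from Lemma~\ref{important}. First I would pull back the two maps $q\colon\BM\Hilb^{\epsilon_0}_{\mu,[m]}(S)\to S^{\ell(\mu)}$ and $p\colon\BM\Hilb^{\epsilon_0}_{\mu,[m]}(S)\to\BM\Hilb^{\epsilon_0}_{n,[m]}(S)$ along $\widetilde{\mathrm{gl}}_h^*Y_h\to\BM\Hilb^{\epsilon_0}_{n,[m]}(S)$, setting $\CN$ to be the fibre product of the nested space with $\widetilde{\mathrm{gl}}_h^*Y_h$ over $\BM\Hilb^{\epsilon_0}_{n,[m]}(S)$. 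Since $p$ is proper, base change gives $\lambda_{\sH}(\mu)_{\mid\widetilde{\mathrm{gl}}_h^*Y_h}=p_*\bigl(q^*(\gamma_{i_1}\otimes\cdots\otimes\gamma_{i_\ell})\cap[\CN]\bigr)$, so the whole problem reduces to describing $\CN$ together with the restrictions of $q$ and $p$ to it.

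The geometric heart is the decomposition of $\CN$. Over $\widetilde{\mathrm{gl}}_h^*Y_h$ an $\epsilon_0$-weighted subscheme lives on an FM degeneration whose $h$ entangled end components have been trivialised, as in Lemma~\ref{important}, to fibres of $\p(T_S\oplus\BC)$, contributing the factors $\Hilb_{n_0}(T_S/S)$, while the remainder sits on the main degeneration, contributing $\widetilde{\Hilb}^{\epsilon_+}_{n',[m+h]}(S)^{\frac{1}{h}}$. A flag $\emptyset\subset Z_1\subset\cdots\subset Z_{\ell(\mu)}$ has each successive quotient $\CO_{Z_i}/\CO_{Z_{i-1}}$ supported at a single point, so recording on which component that point lies partitions the parts of $\mu$ into an ordered decomposition $\mu=\mu^0\sqcup\mu^1\sqcup\cdots\sqcup\mu^h$; because supports on different components are disjoint, the successive-quotient conditions decouple and the sub-flags on distinct components are independent. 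This yields an isomorphism
\[
\CN \cong \coprod_{\mu^0\sqcup\cdots\sqcup\mu^h=\mu} \widetilde{\Hilb}^{\epsilon_+}_{\mu^0,[m+h]}(S)^{\frac{1}{h}} \times_{S^h} \prod_{i=1}^h \Hilb_{\mu^i}(T_S/S),
\]
compatible with $q$, which splits as the product of support maps into $S^{\ell(\mu^0)}\times\prod_i S^{\ell(\mu^i)}$ (the factors for $\mu^i$ mapping diagonally to the attaching point $s_i\in S$ of the $i$-th end component), and with $p$, which splits as the product of the forgetful maps to $\widetilde{\Hilb}^{\epsilon_+}_{n',[m+h]}(S)^{\frac{1}{h}}$ and to the factors $\Hilb_{n_0}(T_S/S)$.

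With the decomposition in hand I would apply $p_*$ and the projection formula summand by summand. On each summand the Künneth factorisation of $q^*(\gamma_{i_1}\otimes\cdots\otimes\gamma_{i_\ell})$ splits according to the distribution of the parts, and $p_*$ of each factor is by construction the Nakajima--Grojnowski class of the corresponding subpartition: $\lambda_{\sH}(\mu^0)$ on the main factor and $\lambda_{\sH}(\mu^i)$ on the $i$-th copy of $\Hilb_{n_0}(T_S/S)$. Finally I would insert the $\BC^*$-weights: by the weight identification (\ref{weighttwist}) the master-space parameter $z$ acts on the $i$-th end-component factor as $z/h+\psi(\CD_i)$, so the classes there become the equivariant classes $\lambda_{\sH}(\mu^i)(z/h+\psi(\CD_i))$, and summing over all ordered decompositions reproduces the claimed formula.

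The hard part will be the clean decomposition of $\CN$ and the accompanying combinatorial bookkeeping, rather than the formal pushforward. The decoupling of the flag must be checked against Definition~\ref{defnweight}: it relies on $\epsilon_0$-weighting placing the relevant supports on disjoint components with the correct lengths on the entangled tails, so that no quotient condition links two factors and only decompositions respecting these length constraints contribute. The more delicate point is that, when $\mu$ has repeated parts, the automorphism prefactors implicit in the normalisation of each $\lambda_{\sH}(\mu^i)$ must reassemble into that of $\lambda_{\sH}(\mu)$ — this is exactly what the summation over ordered decompositions into \emph{pairwise distinct} subpartitions encodes — and for odd classes $\gamma_{i_j}$ one must additionally track the super-commutation signs produced when the parts are regrouped by component into the standard order (\ref{standard}). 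These sign and automorphism factors are routine but are where an error is most likely to enter.
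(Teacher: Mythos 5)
Your proposal is correct and follows essentially the same route as the paper: restrict the nested-Hilbert-scheme correspondence to the wall-crossing locus, use the product description from Lemma~\ref{important} to decompose the nested space into connected components indexed by ordered decompositions $\mu=\mu^0\sqcup\cdots\sqcup\mu^h$ (with components declared empty when lengths mismatch), identify each component as a product of nested Hilbert schemes whose pushforward is the product of the corresponding Nakajima--Grojnowski classes, and substitute the weight identification (\ref{weighttwist}). Your base-change formulation of the restriction step and the closing caveats about automorphism normalisation and super-commutation signs are slightly more explicit than the paper's treatment, but the argument is the same.
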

 \begin{proof} Consider the nested Hilbert scheme associated to $\BM \Hilb^{\epsilon_0}_{n,[m]}(X)_{|\BM \CY_h^*}$, 
 	\[
 	\BM \Hilb^{\epsilon_0}_{\mu,[m]}(X)_{|\BM \CY_h^*}, 
 	\]
 	that is, it parametrises flags $\underline{Z}$, such that 
 	\[
 	Z_{\ell(\mu)} \in \BM \Hilb^{\epsilon_0}_{\mu,[m]}(X)_{|\BM \CY_h^*}.
 	\]  
 	By the relative construction of nested Hilbert schemes, it is clear that the fundamental class of the nested Hilbert scheme $[\BM\Hilb^{\epsilon_0}_{\mu,[m]}(X)]$ restricts to the fundamental class $[\BM\Hilb^{\epsilon_0}_{\mu,[m]}(X)_{|\BM \CY_h^*}]$. Hence the Nakajima--Grojnowski class $\lambda_{\sH}(\mu)$ on the master space restricts to the Nakajima--Grojnowski class on 	$\BM \Hilb^{\epsilon_0}_{n,[m]}(X)_{|\BM \CY_h^*}$, defined by the correspondence 
 	\begin{equation} \label{correspondence2}
 		\begin{tikzcd} [row sep = small, column sep = small]
 			&  &  	
 			\BM \Hilb^{\epsilon_0}_{\mu,[m]}(S)_{|\BM \CY_h^*} \arrow[dl,"q"'] \arrow[dr, "p"] &\\
 			& S^{\ell(\mu)}  &  & \BM \Hilb^{\epsilon_0}_{n,[m]}(S)_{|\BM \CY_h^*}
 		\end{tikzcd}
 	\end{equation}

 	By Lemma \ref{important} and the discussion afterwards, to establish the claim, we therefore have to consider  the $\BC^*$-equivariant Nakajima--Grojnowski classes on 	
 	\begin{equation} \label{product2}
 	\widetilde{\Hilb}^{\epsilon_+}_{n',[m+h]}(S)^{\frac{1}{h}} \times_{S^h} \prod_{i=1}^{h}  \Hilb_{n_0}(T_S/S), 
 	\end{equation}
 where $\BC^*$ acts diagonally on all copies of $T_S$. 
 For brevity, let 
 	\[ 
 	N_{\mu,h}
 	\]
 	be the nested Hilbert scheme associated with the product above, i.e., it parametrises flags $\underline{Z}$, such that $Z_{\ell(\mu)}$ belongs to the product. 
 	
 	 Consider a subscheme $Z$ that belongs to (\ref{product2}), it  splits into a disjoint union of $0$-dimensional subschemes contained in each of the factors of (\ref{product2}),  
 	\[ 
 Z= Z_{0,\ell(\mu) } \sqcup Z_{1,\ell(\mu) } \sqcup \hdots \sqcup  Z_{h,\ell(\mu) }.
 	\]
 	Hence, the nested Hilbert scheme $N_{\mu,h}$ decomposes accordingly: since a sheaf $\CO_{Z_i}/\CO_{Z_{i-1}}$ is required to be supported on one point point, the flag $\underline{Z}$ splits among the components of $Z$. More precisely, for every decomposition of the partition $\mu $ into possibly empty subpartitions, 
 	\[ 
 	\mu=\mu^0\sqcup \mu^1 \sqcup \hdots \sqcup \mu^h,
 	\]
 	there is a connected component of $N_{\mu,h}$, which we denote by $N_{\mu^0, \hdots, \mu^h}$, 
 	\begin{equation} \label{decomp}
 N_{\mu,h}= \coprod_{\mu^0, \hdots, \mu^h } N_{\mu^0, \hdots, \mu^h}, 
 	\end{equation}
 	such that if $|\mu^i |$ is not equal to the length of points on the corresponding component for some $i$, we declare $N_{\mu^0, \hdots, \mu^h}$ to empty. 
 	
 	 Each component  $N_{\mu^0, \hdots, \mu^h}$ can in turn be realised as a product of nested Hilbert schemes, 
 	\[ 
 	\widetilde{\Hilb}^{\epsilon_+}_{\mu^0,[m+h]}(S)^{\frac{1}{h}} \times_{S^h} \prod_{i=1}^{h} \Hilb_{\mu^i, n_0}(T_S/S). 
 	\] 
 	The pushorward of the fundamental class of this product to (\ref{product2}) is the product of pushforwards of fundamental classes of 
 	\[
 	\widetilde{\Hilb}^{\epsilon_+}_{\mu^0,[m+h]}(S)  \times_{S^h} \prod_{i=1}^{h} \Hilb_{n_0}(T_S/S),
 	\] 
 	and
 	\[
 	\widetilde{\Hilb}^{\epsilon_+}_{n',[m+h]}(S) \times_S \Hilb_{\mu^\ell, n_0}(T_S/S) \times_{S^{h-1}} \prod_{i\neq \ell} \Hilb_{ n_0}(T_S/S). 
 	\]  
 	This is exactly equal to  
 	\[ 
 	\lambda_{\mathsf{H}}(\mu^0)\cdot \prod^h_{i=1} \lambda_{\mathsf{H}}(\mu^i)(z).
 	\]
 	Summing over components in (\ref{decomp}) and substituting (\ref{weighttwist}), we finally obtain the statement of the lemma. 
 	%\begin{equation*}
 	%\begin{tikzcd}
 	%	& \BM F_{\mu, k \mid Y'}  \arrow[d]  \arrow[r] &  \FM^{\epsilon_+}_{\mu,[m]}( \CW \times_{S^k} T_S) \arrow[d] \\
 	%	&  \BM F_{ k \mid Y'} \arrow[r]&  \FM^{\epsilon_+}_{n,[m]}( \CW \times_{S^k} T_S), 
 	%	\end{tikzcd}
 %\end{equation*}
\end{proof}

\subsection{Splitting of orbifold classes} 
The restrictions of orbifold classes satisfy the same splitting formula as restrictions of Nakajima--Grojnowski classes in Lemma \ref{Nsplitting}. Let $X_h$ be a space defined in the same way as $Y_h$, but for $\BM\Sym^{\epsilon_0}_{n,[m]}(S)$.
\begin{lemma} We have
\[
\lambda_{\mathsf{S}}(\mu)_{|\widetilde{\mathrm{gl}}_h^*X_h}=\sum_{\mu^0, \mu^1, \hdots \mu^h} \lambda_{\mathsf{S}}(\mu^0)\cdot \prod^h_{i=1} \lambda_{\mathsf{S}}(\mu^i)(z/h+\psi(\CD_i)),
\]
such that all terms have the same description as in Lemma \ref{Nsplitting}. 
\end{lemma}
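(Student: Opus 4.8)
The plan is to mirror the proof of Lemma \ref{Nsplitting}, replacing the nested Hilbert scheme correspondence by the relative inertia stack used to define the orbifold classes. First I would record the $\Sym$-analogue of Lemma \ref{important}: since $X_h$ is produced from $\BM\Sym^{\epsilon_0}_{n,[m]}(S)_{|\BM\CY_h^*}$ by exactly the same trivialisation of the $h$ entangled end components that produces $Y_h$ from the Hilbert master space, the same argument gives an isomorphism
\[
\widetilde{\mathrm{gl}}_h^* X_h \cong \widetilde{\Sym}^{\epsilon_+}_{n',[m+h]}(S)^{\frac{1}{h}} \times_{S^h} \prod_{i=1}^{h} \Sym_{n_0}(T_S/S),
\]
compatibly with the $\BC^*$-action, so that by (\ref{weighttwist}) the weight $z$ restricts to $z/h+\psi(\CD_i)$ on the $i$-th fibre factor. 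It therefore suffices to compute the restriction of the $\BC^*$-equivariant orbifold class $\lambda_{\sS}(\mu)$ to this product.

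The key step is to analyse the relative inertia stack of the displayed product. Because the relative inertia stack of a fibre product over the scheme $S^h$ is the fibre product of the relative inertia stacks of the factors, and because the inertia of a global quotient is the disjoint union $\coprod_{[\sigma]}[Y^\sigma/C(\sigma)]$ over conjugacy classes, the components of $\CI(\widetilde{\mathrm{gl}}_h^* X_h)$ are indexed by tuples $(\sigma^0,\sigma^1,\dots,\sigma^h)$ of conjugacy classes, one for each factor. Concretely, the support of a point of $\widetilde{\mathrm{gl}}_h^* X_h$ is distributed among the main component and the $h$ entangled tails; a permutation $\sigma$ fixing it relative to the Fulton--MacPherson base must preserve each of these geometrically distinct components, hence decomposes as $\sigma=\sigma^0\sigma^1\cdots\sigma^h$ with $\sigma^i$ acting on the points carried by the $i$-th factor, and its centraliser factors as $C(\sigma)\cong C(\sigma^0)\times\prod_{i=1}^h C(\sigma^i)$. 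On cycle types this is precisely the statement that the shape of $\mu$ is partitioned among the factors, giving the same indexing set of decompositions $\mu=\mu^0\sqcup\mu^1\sqcup\cdots\sqcup\mu^h$ into possibly empty subpartitions that appears in Lemma \ref{Nsplitting}.

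With this decomposition in hand, I would observe that both the contraction morphism $\tau$ and the averaging operator $\sum_{\rho\in C(\sigma)}\rho^*$ factor through the product: the identification $S^{\ell(\sigma)}\cong S^{\ell(\sigma^0)}\times\prod_i S^{\ell(\sigma^i)}$ exhibits $\tau$ as the product of the contractions on each factor, and by the factorisation of $C(\sigma)$ the average splits as the product of the averages over $C(\sigma^0)$ and each $C(\sigma^i)$, while the tensor $\gamma_{i_1}\otimes\cdots\otimes\gamma_{i_{\ell(\mu)}}$ distributes onto the blocks indexed by $\mu^0,\dots,\mu^h$. Hence each component of the inertia contributes exactly $\lambda_{\sS}(\mu^0)\cdot\prod_{i=1}^h\lambda_{\sS}(\mu^i)$, with the age shift adding linearly over the factors and the $\BC^*$-weight entering as $z/h+\psi(\CD_i)$ on the fibre factors. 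Summing over all decompositions yields the claimed formula, and all terms acquire the description asserted in Lemma \ref{Nsplitting}.

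I expect the only genuine subtlety to be the group-theoretic factorisation $C(\sigma)\cong C(\sigma^0)\times\prod_i C(\sigma^i)$ on the relevant locus: a priori the centraliser in $\Sigma_n$ of a product of permutations acting on equinumerous point sets can interchange those sets, so one must use that the $h+1$ components are distinguished---the main component versus the separately indexed tails---and that an element of the relative inertia acts by the identity on the Fulton--MacPherson base, which rules out such interchanges. Once this is settled, the remaining bookkeeping (compatibility of $\tau$ with the product, distribution of the cohomology weights, and the age and $\BC^*$-weight contributions) runs in exact parallel with the proof of Lemma \ref{Nsplitting}.
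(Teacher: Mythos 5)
Your argument follows the paper's proof essentially verbatim: reduce to the product description of $\widetilde{\mathrm{gl}}_h^*X_h$ via the $\Sym$-analogue of Lemma \ref{important}, decompose the $\sigma$-fixed locus (equivalently the relative inertia stack) according to how the disjoint cycles of $\sigma$ distribute among the $h+1$ factors, factor the centraliser and the averaging operator $\sum_{\rho\in C(\sigma)}\rho^*$ accordingly, and sum over decompositions $\mu=\mu^0\sqcup\cdots\sqcup\mu^h$ after substituting the weight twist (\ref{weighttwist}). The only addition is your explicit justification that working relative to the Fulton--MacPherson base rules out centraliser elements interchanging equinumerous blocks, a point the paper leaves implicit in asserting $C(\sigma)=\prod_{i=0}^h C(\sigma(i))$.
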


\begin{proof}

By the same procedure as in the case of Hilbert schemes in Lemma \ref{Nsplitting}, it is enough to consider the splitting of orbifold classes on products

 	\begin{equation}  \widetilde{\Sym}^{\epsilon_+}_{n',[m+h]}(S)^{\frac{1}{h}} \times_{S^h} \prod_{i=1}^{h}  \Sym_{n_0}(T_S/S).
\end{equation}
The orbifold class $\lambda_{\mathsf{S}}(\mu)$ is defined via the locus fixed by an element $\sigma \in \Sigma_n$ in the conjugacy class associated with the partition $\mu$, 
\begin{equation} \label{masterfixed}
	\left(
	\widetilde{\Sym}^{\epsilon_+}_{n',[m+k]}(S)^{\frac{1}{h}} \times_{S^h} \prod_{i=1}^{h} \Sym_{n_0}(T_S/S) \right)^\sigma,
\end{equation}
which in turn is defined as in (\ref{fixedlocus}). We can represent $\sigma$ in terms of disjoint cyclic permutations
\[ 
\sigma=\sigma_{1} \cdot \hdots \cdot \sigma_{\ell(\mu)},
\] 
such that $\sigma_j$ is of length $\mu_j$. For a decomposition of the partition $\mu$ into subpartitions  
\[ 
\mu=\mu^0\sqcup \mu^1 \sqcup \hdots \sqcup \mu^h,
\]
let 
\[
\sigma(i):=\prod_{\mu_j  \in \mu^i} \sigma_j
\]
denote the permutation associated to the subpartition $\mu^i$.
Then (\ref{masterfixed}) decomposes accordingly
\begin{equation} \label{masterfixed2}
	(\ref{masterfixed})= \coprod_{\mu^0, \hdots, \mu^h }  \left(	\widetilde{\Sym}^{\epsilon_+}_{n',[m+h]}(S)^{\sigma(0)}\right)^{\frac{1}{h}} \times_{S^h} \prod_{i=1}^{h}  \Sym_{n_0}(T_S/S)^{\sigma(i)}, 
\end{equation}
such that we define the fixed locus to be empty if $|\mu^i|$ does not match the number of points of the corresponding space for some $i$. The centraliser group $C(\sigma)$ also decomposes as the product of centraliser groups of each $\sigma(i)$ viewed as elements of the symmetric group $\Sigma_{\ell(\mu^i)}$, 
\[
C(\sigma)=\prod^h_{i=0} C(\sigma(i)).
\]
Hence on a  components from (\ref{masterfixed}), the orbifold class $\lambda_{\sS}(\mu)$ is given by 
\begin{multline*}
\tau^*\left(\sum_{\rho\in C(\sigma) } \rho^*( \gamma_{i_1}\otimes \hdots \otimes \gamma_{i_{\ell(\mu)}})\right)\\
=\tau^* \left( \left(\sum_{\rho \in C(\sigma(0))} 
 \rho^*(\underline{\gamma_1}) \right) \otimes \hdots  \otimes \left(\sum_{\rho \in C(\sigma(h))} 
 \rho^*(\underline{\gamma_h}) \right)\right)\\ 
  = \lambda_{\mathsf{H}}(\mu^0)\cdot \prod^h_{i=1} \lambda_{\mathsf{H}}(\mu^i)(z),
\end{multline*}
where 
\[
\underline{\gamma_\ell}=\bigotimes_{\mu_j \in \mu^i }\gamma_{i_j}. 
\] 
Summing over components in (\ref{masterfixed2}), substituting (\ref{weighttwist}), we  obtain the statement of the lemma.  

%\begin{equation*}
\end{proof}
 
\section{Maps to  master spaces} \label{Mapsmaster}
\subsection{Relative spaces of maps to master spaces} Due to the relative description of moduli spaces of maps to interpolating spaces in Section \ref{Relative}, we can use $\BM \widetilde{\mathcal{FM}}_{[m]}(S)$-relative moduli spaces of maps to master spaces to compare the invariants. Let 
\begin{align*} \label{relativemaps}
\Mbar_{g,k}^\rel(\BM\Hilb^{\epsilon_0}_{n,[m]}(S), dE) \quad  \text{and} \quad   \Mbar_{g,k}^\rel(\BM\Sym^{\epsilon_0}_{n,[m]}(S), 0)
\end{align*}
be the $\BM\widetilde{\mathcal{FM}}_{[m]}(S)$-relative moduli space of maps to mater spaces $\BM\Hilb_{n,[m]}^{\epsilon_0}(S)$ and $\BM\Sym^{\epsilon_0}_{n,[m]}(S)$, respectively, such that the exceptional  class $E$ is defined in (\ref{masterclass}). Note that $E$ is a class of homological degree $3$ on $\BM\Hilb_{n,[m]}^{\epsilon_0}(S)$. By definition, a $\BM\widetilde{\mathcal{FM}}_{[m]}(S)$-relative map $f$,
  	\begin{equation} \label{fiberprod1}
	\begin{tikzcd}[row sep = scriptsize, column sep = scriptsize]
		&C  \arrow[d]  \arrow[r,"f"] &  \BM\Hilb_{n,[m]}^{\epsilon_0}(S) \arrow[d] \\
		&   \Spec(\BC) \arrow[r,"\iota"]&  \BM\widetilde{\mathcal{FM}}_{[m]}(S)
	\end{tikzcd}
\end{equation}
is in the class $dE$, if 
\[ 
f_*([C])=d\iota^*(E). 
\]
A version of Proposition \ref{Proprel} also holds in this case. 
\begin{proposition} \label{masterrelative} The natural morphisms 
\begin{align*}
&\Mbar_{g,k}^\rel(\BM\Hilb^{\epsilon_0}_{n,[m]}(S), dE) \rightarrow  \Mbar_{g,k}(\BM\Hilb^{\epsilon_0}_{n,[m]}(S), d\iota^*(E)) \\
&\Mbar_{g,k}^\rel(\BM\Sym^{\epsilon_0}_{n,[m]}(S), 0) \rightarrow \Mbar_{g,k}(\BM\Sym^{\epsilon_0}_{n,[m]}(S), 0) 
\end{align*}		
are isomorphisms.  In particular, the relative moduli spaces are proper. 
\end{proposition}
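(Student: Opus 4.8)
The plan is to mirror the proof of Proposition~\ref{Proprel}, with the master base $\BM\widetilde{\mathcal{FM}}_{[m]}(S)$ taking the role of $\mathcal{FM}_{[m]}(S)$. As in diagram~(\ref{fiberprod1}), the relative moduli problem records a family of stable maps $f\colon \CC \to \BM\Hilb^{\epsilon_0}_{n,[m]}(S)$ over a base $B$ together with a compatible morphism $B \to \BM\widetilde{\mathcal{FM}}_{[m]}(S)$, while the absolute problem forgets the latter; the natural morphism in the statement is this forgetful one. To construct its inverse I would, exactly as in Proposition~\ref{Proprel}, produce a canonical map $B \to \BM\widetilde{\mathcal{FM}}_{[m]}(S)$ out of the remaining data. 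For this it suffices to show that the composition of $f$ with the structure morphism $p\colon \BM\Hilb^{\epsilon_0}_{n,[m]}(S) \to \BM\widetilde{\mathcal{FM}}_{[m]}(S)$ is constant on each fiber of $\CC \to B$: one then picks a section $s$ of $\CC \to B$ over an \'etale cover $U \to B$, forms $p\circ f\circ s\colon U \to \BM\widetilde{\mathcal{FM}}_{[m]}(S)$, observes that fiberwise constancy of $p\circ f$ makes this independent of $s$, and concludes that it descends to the desired morphism on $B$. This morphism is visibly inverse to the forgetful one.

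The crux is therefore the master analogue of Lemma~\ref{exceptionalcurve}. First I would factor $p$ through the relative Hilbert--Chow morphism over the master base,
\[
\pi\colon \BM\Hilb^{\epsilon_0}_{n,[m]}(S) \rightarrow \BM\Sym^{\epsilon_0}_{[n+m]}(S)/\Sigma_n,
\]
sending a weighted subscheme to its length-weighted support, and show that $\pi$ contracts the curve representing $\iota^*(E)$. Although $E$ is a class of homological degree $3$ on the master space by the remark following~(\ref{masterclass}), the maps in question lie in the honest curve class $\iota^*(E)$ obtained by restricting to the fiber of $\BM\widetilde{\mathcal{FM}}_{[m]}(S)$ over a point $\iota$; over such a fiber the master structure trivializes and one is simply looking at $\epsilon_0$-weighted subschemes on a single calibrated FM degeneration. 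There $\iota^*(E)$ is represented, just as in Section~\ref{exc}, by a $\p^1$ varying a length-$2$ nonreduced structure at one point while the other $n+m-1$ points stay fixed and distinct; its length-weighted support is then constant along this $\p^1$, so $\pi$ maps the $\p^1$ to a point. Hence $\pi_*(\iota^*(E))=0$, and for any $f$ in class $d\iota^*(E)$ the composition $\pi\circ f$, and a fortiori $p\circ f$, contracts every fiber of $\CC \to B$, which is the fiberwise constancy required above.

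For $\BM\Sym^{\epsilon_0}_{n,[m]}(S)$ the assertion is immediate, as in Proposition~\ref{Proprel}: a map in curve class $0$ has constant coarse map, hence $p\circ f$ is constant and no contraction argument is needed. The hard part, to the extent there is one, will be the careful construction of $\pi$ over the master base and the check that $p$ factors through it, since $\BM\widetilde{\mathcal{FM}}_{[m]}(S)$ is an inductive blow-up of $\mathcal{FM}_{[m]}(S,n)$ followed by a projectivized bundle, so one must verify that the length-weighted support morphism is well defined across the boundary and the entangled-tail loci; the contraction statement itself reduces cleanly to Lemma~\ref{exceptionalcurve} fiber by fiber. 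Once the inverse is in hand, the two moduli spaces are isomorphic, and properness of the relative spaces follows, as they are thereby identified with the proper absolute moduli spaces of stable maps to the smooth proper stacks $\BM\Hilb^{\epsilon_0}_{n,[m]}(S)$ and $\BM\Sym^{\epsilon_0}_{n,[m]}(S)$.
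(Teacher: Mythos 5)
Your proposal is correct in outline and follows the paper's overall strategy — reduce to showing that $p\circ f$ is fiberwise constant and then run the section/descent argument of Proposition~\ref{Proprel} to build the inverse — but it handles the one genuinely new ingredient differently. The paper does \emph{not} construct a Hilbert--Chow morphism over the master base. Instead it composes further down, $\Mbar_{g,k}^\rel(\BM\Hilb^{\epsilon_0}_{n,[m]}(S), dE) \to \BM\widetilde{\mathcal{FM}}_{[m]}(S,n) \to \widetilde{\mathcal{FM}}_{[m]}(S,n)$, invokes the arguments of Proposition~\ref{Proprel} (i.e.\ Lemma~\ref{exceptionalcurve}) to contract maps over the non-master base, and then disposes of the remaining calibration direction by observing that $\BM\widetilde{\mathcal{FM}}_{[m]}(S,n)\to\widetilde{\mathcal{FM}}_{[m]}(S,n)$ is a $\p^1$-bundle whose hyperplane class meets $d\iota^*(E)$ trivially; a curve contracted over the base of a $\p^1$-bundle and of degree zero against $\CO(1)$ is a point. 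Your route instead absorbs the calibration direction into a relative Hilbert--Chow morphism $\pi$ to $\BM\Sym^{\epsilon_0}_{[n+m]}(S)/\Sigma_n$ and contracts the explicit representative of $\iota^*(E)$ there. This works, and since $\BM\Sym^{\epsilon_0}_{[n+m]}(S)/\Sigma_n$ lives over $\BM\widetilde{\mathcal{FM}}_{[m]}(S,n)$ it does take care of the calibration automatically; but it shifts the entire burden onto the construction of $\pi$ over the master base — exactly the step you defer as ``the hard part''. Note that this construction is not purely formal: taking length-weighted supports can only enlarge stabilizers, so one must check that condition (5) of $\epsilon_0$-stability survives (it does, since conditions (3) and (4) force at least two support points on each unmarked end component, rigidifying the translation--dilation automorphisms), in addition to verifying good behaviour over the entangled-tail and degenerate-calibration loci. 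The paper's two-step factorization is lighter precisely because it reuses Lemma~\ref{exceptionalcurve} verbatim and only adds the elementary hyperplane-class observation; your version is a viable alternative provided you actually carry out the construction of $\pi$, and it would be self-contained in a way the paper's citation of ``the arguments from Proposition~\ref{Proprel}'' is not.
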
	

\begin{proof} Consider the projection 
	\[ 
	\Mbar_{g,k}^\rel(\BM\Hilb^{\epsilon_0}_{n,[m]}(S), dE) \rightarrow \BM\widetilde{\mathcal{FM}}_{[m]}(S,n) \rightarrow \widetilde{\mathcal{FM}}_{[m]}(S,n).
	\]
By the arguments from Proposition \ref{Proprel}, all maps in a class $d\iota^*(E)$ are contracted by this projection. On the other hand, the projection
\[ 
 \BM\widetilde{\mathcal{FM}}_{[m]}(S,n) \rightarrow \widetilde{\mathcal{FM}}_{[m]}(S,n)
\]
is a $\p^1$-bundle, such that the $\p^1$-fibers parametrise the calibration of a FM degeneration. It is clear that the class $d\iota^*(E)$ intersects the Hyperplane class of the $\p^1$-bundle trivially. We conclude that maps in the class $d\iota^*(E)$ must be contracted by the projection
\[
\Mbar_{g,k}^\rel(\BM\Hilb^{\epsilon_0}_{n,[m]}(S), dE) \rightarrow \BM\widetilde{\mathcal{FM}}_{[m]}(S).
\]
Hence by the arguments from Proposition \ref{Proprel}, we obtain the claim for the spaces $\Mbar_{g,k}^\rel(\BM\Hilb^{\epsilon_0}_{n,[m]}(S), dE)$. The case of $\Mbar_{g,k}^\rel(\BM\Sym^{\epsilon_0}_{n,[m]}(S), 0)$ follows immediately, since the curve class is $0$ in this case. 
\end{proof}	

We equip these relative moduli spaces of maps with the $\BM\widetilde{ \mathcal{FM}}_{[m]}(S)$-relative obstruction theory given by 
\[
R\pi_*F^*(T_p),
\]
where $T_p$ is the relative tangent bundle of 
\[ 
p \colon \BM\Hilb^{\epsilon_0}_{n,[m]}(S) \rightarrow \BM \widetilde{\mathcal{FM}}_{[m]}(S). 
\]
Note that in this case, the relative and absolute obstruction theories are no longer isomorphic. The virtual dimensions are different because of $\p^1$-fibers parametrising calibrations of FM degenerations. They give rise to additional obstructions in the absolute case; this is analogous to the difference between obstruction theories of maps to $X$ and $X\times  \p^1$. 
 
\subsection{Fixed components of spaces of maps to the master spaces} \label{mapsfixed} Consider the $\BC^*$-actions on 
\begin{align*} \label{relativemaps}
	\Mbar_{g,k}^\rel(\BM\Hilb^{\epsilon_0}_{n,[m]}(S), dE) \quad  \text{and} \quad   \Mbar_{g,k}^\rel(\BM\Sym^{\epsilon_0}_{n,[m]}(S), 0)
\end{align*}
induced by $\BC^*$-actions on master spaces. The following lemma describes the associated $\BC^*$-fixed locus. 
\begin{lemma} \label{torusfixed} We have 
	\begin{multline*}
		\Mbar_{g,k}^\rel(\BM\Hilb^{\epsilon_0}_{n,[m]}(S), dE)^{\BC^*}\\
		=\Mbar_{g,k}^\rel(F_-,dE) \sqcup \Mbar_{g,k}^\rel(F_+,dE) \sqcup \coprod_{h\geq 1}\Mbar_{g,k}^\rel(\BM\Hilb^{\epsilon_0}_{n,[m]}(S)_{|\BM \CY_h^*}, dE)^{\BC^*}.
	\end{multline*} 
	The same holds for $\Mbar_{g,k}^\rel(\BM\Sym^{\epsilon_0}_{n,[m]}(S), 0)$. 
\end{lemma}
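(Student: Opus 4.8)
\section*{Proof proposal}

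The plan is to reduce everything to the geometry of the projection
\[
p \colon \BM\Hilb^{\epsilon_0}_{n,[m]}(S) \to \BM\widetilde{\mathcal{FM}}_{[m]}(S,n)
\]
and its $\BC^*$-fixed fibres. First I would recall, from the proof of Proposition \ref{masterrelative}, that every relative stable map in the class $dE$ is contracted by $p$: for a point $(C,f,\iota)$ of $\Mbar_{g,k}^\rel(\BM\Hilb^{\epsilon_0}_{n,[m]}(S), dE)$ the composite $p\circ f$ is constant, equal to $\iota$, so the entire source curve $C$ maps into the single fibre $p^{-1}(\iota)=\BM\Hilb^{\epsilon_0}_{n,[m]}(S)_{|\iota}$. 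This is the key structural input: it prevents the localisation graph of a $\BC^*$-fixed map from having edges travelling between distinct base points, and confines the whole analysis to one fibre at a time. The $\Sym$ case is identical, with the contraction immediate since the class is $0$.

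Next I would observe that, since $p$ is $\BC^*$-equivariant and $\iota$ is recovered from $f$, a point of the relative moduli space is $\BC^*$-fixed if and only if its base point $\iota$ is a $\BC^*$-fixed point of $\BM\widetilde{\mathcal{FM}}_{[m]}(S,n)$ and the induced map $f\colon C\to p^{-1}(\iota)$ is $\BC^*$-fixed for the action on the fibre. Using the description of the fixed components recalled in Section \ref{Masterfixed}, I would then classify the $\BC^*$-fixed points of the base into exactly three disjoint types: the two extremal sections of the $\p^1$-bundle $\BM\widetilde{\mathcal{FM}}_{[m]}(S,n)\to\widetilde{\mathcal{FM}}_{[m]}(S,n)$, over which the master space restricts to the fixed components $F_-$ and $F_+$ respectively, and the loci $\BM\CY_h^*$ (with exactly $h$ entangled tails and $v_1,v_2\neq 0$), over which it restricts to $\BM\Hilb^{\epsilon_0}_{n,[m]}(S)_{|\BM\CY_h^*}$. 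The disjointness of these three loci yields the disjointness of the three terms in the statement, consistent with the decomposition (\ref{fixed}).

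The core is a case analysis on the type of $\iota$. Over the two extremal sections the fibre of $p$ is \emph{pointwise} $\BC^*$-fixed, reflecting that $F_-$ and $F_+$ coincide with the restriction of $\BM\Hilb^{\epsilon_0}_{n,[m]}(S)$ over these sections; hence any relative map into such a fibre is automatically $\BC^*$-fixed, and collecting these over all $\iota$ in the section reproduces $\Mbar^\rel_{g,k}(F_-,dE)$ and $\Mbar^\rel_{g,k}(F_+,dE)$ with no fixed-locus condition. Over $\BM\CY_h^*$ the fibre instead carries a nontrivial induced $\BC^*$-action (the calibration is scaled while $v_1,v_2\neq0$), so here one genuinely records the $\BC^*$-fixed maps, giving $\coprod_{h\geq1}\Mbar^\rel_{g,k}(\BM\Hilb^{\epsilon_0}_{n,[m]}(S)_{|\BM\CY_h^*},dE)^{\BC^*}$. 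Conversely, every such relative map manifestly arises from a $\BC^*$-fixed point of the ambient moduli space, which provides the reverse inclusion and hence equality; the argument for $\Mbar^\rel_{g,k}(\BM\Sym^{\epsilon_0}_{n,[m]}(S),0)$ is word for word the same.

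I expect the main obstacle to be the geometric content of the third step: pinning down that the fibres of $p$ over the two extremal sections are pointwise fixed, so that the first two terms carry no $(-)^{\BC^*}$, while the fibres over $\BM\CY_h^*$ are only fixed up to automorphisms of the FM degeneration (so that $F_h\subsetneq\BM\Hilb^{\epsilon_0}_{n,[m]}(S)_{|\BM\CY_h^*}$ and the superscript must remain), together with verifying that no further base fixed points are missed. All of this must be extracted from the explicit master-space construction of \cite{NHilb} summarised in Section \ref{Masterfixed}; the remaining ingredients—contraction over the base, equivariance of $p$, and disjointness—are formal.
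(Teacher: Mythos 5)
Your proposal is correct and follows essentially the same route as the paper: both arguments hinge on Proposition \ref{masterrelative} to confine a fixed map to a single fibre of $p$, then classify the $\BC^*$-fixed loci of the base and observe that over $p(F_\pm)$ the restriction of the master space is exactly the pointwise-fixed component $F_\pm$ (so no superscript is needed), while over $\BM\CY_h^*$ the fibre carries a nontrivial action and the fixed-locus condition must be retained. The paper phrases the base-classification step slightly differently (a fixed map lands on a $\BC^*$-invariant orbit, which must connect fixed points, hence lies in the fibre product over $p(F_\bullet)$), but the content is the same.
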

\begin{proof}
 Assume $f$ is $\BC^*$-fixed, then it must map onto a $\BC^*$-invariant orbit. Moreover, by Proposition \ref{masterrelative}, $f$ is contracted by 
	\[ 
	p \colon \BM\Hilb^{\epsilon_0}_{n,[m]}(S) \rightarrow	\BM \widetilde{\mathcal{FM}}_{[m]}(S).  
	\] 
	Since a $\BC^*$-invariant orbit must connect two $\BC^*$-fixed points, we conclude that the map $f$ must be contained in a fiber-product
  	\begin{equation} \label{fiberprod1}
	\begin{tikzcd} [row sep = scriptsize, column sep = scriptsize]
	 & 	\BF \arrow[d]  \arrow[r,] &  \BM\Hilb_{n,[m]}^{\epsilon_0}(S) \arrow[d] \\
		&  p(F_\bullet)\arrow[r]&  \BM\widetilde{\mathcal{FM}}_{[m]}(S)
	\end{tikzcd}
\end{equation}
where $\bullet \in \{-,+, h\}$, and  $p(F_\bullet)$ is the reduced stack-theoretic image of $F_\bullet$ with respect to the projection $p$. If $\bullet=-/+$, then $\BF=F_{-/+}$, and 
\[
\Mbar_{g,k}^\rel(F_{-/+},dE)^{\BC^*}=\Mbar_{g,k}^\rel(F_{-/+},dE),
\]
since $F_{-/+}$ are divisorial $\BC^*$-fixed components. 
If $\bullet=h$ for some $h$, then  $\BF=\BM\Hilb^{\epsilon_0}_{n,[m]}(S)_{|\BM \CY_h^*}$. We thereby obtain the claim of the lemma. 
 The same conclusion applies to $\BM\Sym^{\epsilon_0}_{n,[m]}(S)$. 
 \end{proof}	

In light of Lemma \ref{torusfixed}, we denote the $\BC^*$-fixed components of spaces of maps to $\BM\Hilb^{\epsilon_0}_{n,[m]}(S)$ as follows,  
\[ 
\Mbar_{g,k}^\rel(\BM\Hilb^{\epsilon_0}_{n,[m]}(S), dE)^{\BC^*}=M_- \sqcup M_+ \sqcup \coprod_{h, \underline{d}} M_{h,\underline{d}} 
\]
such that 
\begin{align*}
&M_-=	\Mbar_{g,k}^\rel(F_-, dE) \\
&M_+= \Mbar_{g,k}^\rel(F_+, dE)\\
&M_{h,\underline{d}}=\Mbar_{g,k}^\rel(\BM\Hilb^{\epsilon_0}_{n,[m]}(S)_{|\BM \CY_h^*}, dE)^{\BC^*},
\end{align*}	
and where 
\[
\underline{d}=(d_0, \hdots, d_h)\in \BZ^{h+1}
\]is a partition of $d$ with possibly zero parts, which specifies the degrees of maps on each factor in the product from Lemma \ref{important}. Even more detailed description of these spaces is provided below. 

The $\BC^*$-fixed components of spaces of maps to $\BM\Sym^{\epsilon_0}_{n,[m]}(S)$ admit an analogous description with the difference that there is no splitting of the degree $d$.  

\subsubsection{Fixed component $M_-$} By Section \ref{Masterfixed}, we have
\[ 
M_-\cong	\Mbar_{g,k}^\rel(\Hilb^{\epsilon_-}_{n,[m]}(S) , dE) .
\]
The normal bundle of  $\Hilb^{\epsilon_-}_{n,[m]}(S)$ comes from $\widetilde{\CF\CM}_{[m]}(S,n)$. Hence, by the relative construction of spaces of maps, the normal bundle of $M_-$ is the pullback of $\mathbf{z}^{-1}\otimes \BM_{\widetilde{\CF\CM}_{[m]}(S,n)}^\vee$ via $M_- \rightarrow \widetilde{\CF\CM}_{[m]}(S,n)$. 
\subsubsection{Fixed component $M_+$}  Since our spaces of maps are $\BM \widetilde{\mathcal{FM}}_{[m]}(S)$-relative, by Section \ref{Masterfixed}, we have 
\[ 
M_+\cong	\Mbar_{g,k}^\rel(\Hilb^{\epsilon_+}_{n,[m]}(S),dE) \times_{\CF\CM_{[m]}(S,n)} \widetilde{\CF\CM}_{[m]}(S,n),
\]
such that its normal bundle is also the pullback of $\mathbf{z}\otimes \BM_{\widetilde{\CF\CM}_{[m]}(S,n)}$ via  $M_+ \rightarrow \widetilde{\CF\CM}_{[m]}(S,n)$.
\subsubsection{Fixed components $M_{h,\underline{d}}$} Finally, by Lemma \ref{important}, the wall-crossing components $M_{h,\underline{d}}$ admit the following description after pulling them back via the gluing morphism $\widetilde{\mathrm{gl}}_h$, 
\[ 
\widetilde{\mathrm{gl}}_h^*M_{h,\underline{d}}\cong  \Mbar_{g,k}^\rel( \widetilde{\Hilb}^{\epsilon_+}_{n',[m+h]}(S)^{\frac{1}{h}} \times_{S^h} \prod_{i=1}^{h} \Hilb_{n_0}(T_S/S), \sum d_iE)^{\BC^*}, 
\]
such that the $\BC^*$-action is given by the diagonal action on each copy of $T_S$. Note that introducing the marked points in the definition of the space $Y_h$ does not change the $\BC^*$-locus, but only its normal bundle, because 
\[
Y_h \rightarrow \BM\Hilb^{\epsilon_0}_{n,[m]}(S)_{|\BM \CY_h^*}
\]
is a quotient by translations on end components. Moreover, since our moduli spaes are $\BM\widetilde{\CF\CM}_{[m]}(S,n)$-relative,  taking $h$-roots of  $\BM_{\widetilde{\CF\CM}_{[m+h]}(S,n')}$ commutes with taking the moduli spaces of maps and the $\BC^*$-fixed locus.  

Moduli spaces of maps to products admit projections to the products of moduli spaces of maps, 
\begin{multline} \label{proj}
	\pr \colon \Mbar_{g,k}^\rel( \widetilde{\Hilb}^{\epsilon_+}_{n',[m+h]}(S) \times_{S^h} \prod_{i=1}^{h} \Hilb_{n_0}(T_S/S), \sum^h_{i=0} d_i E) \rightarrow \\
	\Mbar_{g,k}^\rel( \widetilde{\Hilb}^{\epsilon_+}_{n',[m+h]}(S),d_0E) \times_{S^h} \prod_{i=1}^{h} \Mbar_{g,k}^\rel( \Hilb_{n_0}(T_S/S),d_i E),
\end{multline}
which might involve stabilisation of source curves (i.e., contractions of rational tails). This projection also commutes with taking the $h$-roots. 
\subsubsection{Normal bundles of fixed components  $M_{h,\underline{d}}$} \label{normalmaps}
The normal complexes of these spaces can be determined using Lemma \ref{important} and the discussion that follows after it. First, we can split the absolute virtual tangent complex of the moduli spaces of maps into its $\BM\widetilde{\CF\CM}_{[m]}(S,n)$-relative virtual tangent complex and the tangent complex of $\BM\widetilde{\CF\CM}_{[m]}(S,n)$,
\begin{multline}
\BT^{\mathrm{vir}}_{	\Mbar_{g,k}^\rel(\BM\Hilb^{\epsilon_0}_{n,[m]}(S), dE) /\BM\widetilde{\CF\CM}_{[m]}(S,n)}  \\
\rightarrow \BT^{\mathrm{vir}}_{ 	\Mbar_{g,k}^\rel(\BM\Hilb^{\epsilon_0}_{n,[m]}(S), dE) } \rightarrow \BT_{\BM\widetilde{\CF\CM}_{[m]}(S,n)} \rightarrow.  
\end{multline}
Let us also split the normal complex $N_{M_{h,\underline{d}}}$ in the $K$-theory accordingly, 
\[
N_{M_{h,\underline{d}}}=N_1+N_2,
\]
where $N_1$ comes from $\BM\widetilde{\CF\CM}_{[m]}(S,n)$. 

The moving part of $\BT_{\BM\widetilde{\CF\CM}_{[m]}(S,n)}$ gives rise to $N_{  \BM \CY_h^* }$, which is described in (\ref{normalbundle}), 
\[ 
N_1=N_{  \BM \CY_h^* }. 
\]
On the other hand, the moving part of  $ \BT^{\mathrm{vir}}_{ 	\Mbar_{g,k}^\rel(\BM\Hilb^{\epsilon_0}_{n,[m]}(S), dE) /\BM\widetilde{\CF\CM}_{[m]}(S,n)}$ is
\[
N_2=\bigoplus^h_{i=1} N^{\mathrm{vir}}_{V_{n_0,d_i}} ( \bz^{1/h}\otimes \BL^\vee(\CD_i)),
\]  
where  $\BL^\vee(\CD_i)$ are pulled back from $\widetilde{\CF\CM}_{[m]}(S,n)$, 
\[ 
V_{n_0,d_i}:= \Mbar_{g,k}^\rel( \Hilb_{n_0}(T_S/S),d_i E)^{\BC^*},
\]
and $N^{\mathrm{vir}}_{V_0}(\bz)$ is the equivariant virtual normal complex of $V_{n_0,d_i}$. The twist comes from  (\ref{torusaction}). 

Since the projection morphism (\ref{proj}) involves just stabilisation of curves (i.e., it contracts rational tails and bridges), the obstruction theories of both spaces are compatible in the sense of \cite{Man}. In particular,  the virtual fundamental class of the source space pushes forward to the virtual fundamental class of the target space. The latter admits a product expression, 
\[ 
[\Mbar_{g,k}^\rel( \widetilde{\Hilb}^{\epsilon_+}_{n',[m+h]}(S),d_0E)]^{\mathrm{vir}}\times_{S^h} \prod_{i=1}^{h} [ \Mbar_{g,k}^\rel( \Hilb_{n_0}(T_S/S),d_i E)]^{\mathrm{vir}}. 
\]
Overall, this is known as a product formula for Gromov--Witten virtual fundamental cycles \cite{Behr}. Note that it holds for $\BC^*$-equivariant classes and is compatible with the localisation. 

For brevity, let us denote 
\[
\CI_{n_0,d_i}(z):=\mathrm{e}_{\BC^*}(N_{V_{ n_0,d_i}}( \bz))^{-1}.
\]
Assembling everything together, we obtain the expression of the localisation contribution from $M_{h,\underline{d}}$ after forgetting the $h$-roots and applying the projection. 

\begin{proposition} \label{isomophism} With respect to the identification from Lemma \ref{important} and projection (\ref{proj}), the virtual fundamental class is described as follows
\begin{multline*} \pr_*([\widetilde{\mathrm{gl}}_h^*M_{h,\underline{d}}]^{\mathrm{vir}} )	= \frac{1}{h}
		[\Mbar_{g,k}^\rel( \widetilde{\Hilb}^{\epsilon_+}_{n',[m+h]}(S),d_0E)]^{\mathrm{vir}}\times_{S^h} \prod_{i=1}^{h}[V_{n_0,d_i}]^{\mathrm{vir}},
\end{multline*}
while the Euler class of the virtual normal complex is given by
\begin{multline*}		
		  	\frac{1}{\mathrm{e}_{\BC^*}(\widetilde{\mathrm{gl}}_h^* N_{M_{h,\underline{d}}}^{\mathrm{vir}})}\\
		  	=\pr^*\left(\frac{\prod^h_{i=1}(\sum_j \mathrm{c}_j(S)(z/h+\psi(\CD_i))^j) )}{-z/h-\psi(\CD_1)-\psi(\CE_{m+1})-\sum^{\infty}_{i=h}\CY_i} \cdot \prod^h_{i=1} \CI_{n_0,d_i}(z/h+\psi(\CD_i))\right) .
\end{multline*}
The same holds for the wall-crossing components of 	$\Mbar_{g,k}^\rel(\BM\Sym^{\epsilon_0}_{n,[m]}(S), 0)$. 
\end{proposition}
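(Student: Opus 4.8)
The plan is to reduce the entire statement to the product description of $M_{h,\underline{d}}$ furnished by Lemma~\ref{important}, and then to compute the virtual class and the virtual normal complex factor by factor. First I would record, as already noted in Section~\ref{mapsfixed}, that after pulling back along the gluing morphism the fixed component is
\[
\widetilde{\mathrm{gl}}_h^* M_{h,\underline{d}}
\cong
\Mbar_{g,k}^\rel\!\Big(
\widetilde{\Hilb}^{\epsilon_+}_{n',[m+h]}(S)^{\frac{1}{h}}
\times_{S^h}
\prod_{i=1}^{h}\Hilb_{n_0}(T_S/S),\ \textstyle\sum_i d_i E
\Big)^{\BC^*},
\]
with $\BC^*$ acting by diagonal scaling on the $T_S$-factors and $\underline{d}=(d_0,\dots,d_h)$ recording the degree on each factor. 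Here one uses that forming moduli of maps, passing to the $\BC^*$-fixed locus, and adjoining $h$-th roots of the calibration bundle all commute, which is exactly what the relative construction of the spaces of maps together with Lemma~\ref{important} provides.

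For the virtual class I would invoke the product formula for Gromov--Witten virtual cycles \cite{Behr} along the projection $\pr$ of (\ref{proj}). Since $\pr$ only stabilises the source curve (it contracts rational tails and bridges), the two obstruction theories are compatible in the sense of \cite{Man}, so $\pr_*$ of the virtual class of the source equals the external product of the virtual classes of the factors. The remaining point is the prefactor $\tfrac1h$: the root stack $\widetilde{\Hilb}^{\epsilon_+}_{n',[m+h]}(S)^{\frac{1}{h}}$ is a $\mu_h$-gerbe over $\widetilde{\Hilb}^{\epsilon_+}_{n',[m+h]}(S)$, so forgetting the $h$-th roots contributes exactly $\tfrac1h$ upon pushforward. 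This yields the first displayed identity of the proposition.

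The virtual normal complex is handled through the splitting $N_{M_{h,\underline{d}}}=N_1+N_2$ of Section~\ref{normalmaps}, together with multiplicativity of the inverse equivariant Euler class over direct sums and the rule $\mathrm{e}(A-B)=\mathrm{e}(A)/\mathrm{e}(B)$ on $K$-theory classes. For $N_1=N_{\BM\CY_h^*}$ I would substitute (\ref{normalbundle}): the positive line-bundle summand $\bz^{-1/h}\otimes\BL^\vee(\CE_{m+1})\otimes\BL^\vee(\CD_{m+1})\otimes\CO(-\sum_{i\geq h}\CY_i)$ has equivariant first Chern class $-z/h-\psi(\CE_{m+1})-\psi(\CD_1)-\sum_{i\geq h}\CY_i$ (using $\mathrm{c}_1(\BL^\vee(\CD))=-\psi(\CD)$ and the relabelling $\CD_{m+1}\mapsto\CD_1$ induced by $\widetilde{\mathrm{gl}}_h$), producing the denominator; while the subtracted rank-$2h$ summand $\bigoplus_{i=1}^h T_S\otimes\bz^{1/h}\otimes\BL(\CD_i)$ contributes the numerator $\prod_{i=1}^h\big(\sum_j \mathrm{c}_j(S)(z/h+\psi(\CD_i))^j\big)$ through the product of the equivariant Euler classes of the twisted tangent bundles, each twist having weight $z/h+\psi(\CD_i)$ since $\mathrm{c}_1(\BL(\CD_i))=\psi(\CD_i)$. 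For $N_2=\bigoplus_{i=1}^h N^{\mathrm{vir}}_{V_{n_0,d_i}}(\bz^{1/h}\otimes\BL^\vee(\CD_i))$ I would use the definition $\CI_{n_0,d_i}(z)=\mathrm{e}_{\BC^*}(N_{V_{n_0,d_i}}(\bz))^{-1}$ together with the weight identification (\ref{weighttwist})--(\ref{torusaction}), which replaces the equivariant parameter by $z/h+\psi(\CD_i)$ on each factor, giving $\prod_{i=1}^h\CI_{n_0,d_i}(z/h+\psi(\CD_i))$. Pulling back by $\pr^*$ assembles these contributions into the second displayed formula. The $\Sym$ case is the same argument verbatim, with the simplification that the curve class is $0$ throughout, so there is no partition $\underline{d}$ of the degree to track.

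I expect the main obstacle to be the bookkeeping of equivariant weights and $h$-th root twists: one must verify that the substitution $z\mapsto z/h+\psi(\CD_i)$ is applied consistently in both $N_1$ and $N_2$ (this is precisely the content of (\ref{torusaction}) and \cite[Lemma~6.5.6]{YZ}), and that the cotangent line bundles $\BL(\CD_i)$, $\BL^\vee(\CE_{m+1})$, $\BL^\vee(\CD_{m+1})$ appearing in (\ref{normalbundle}) match the $\psi$-classes named in the statement after the relabelling induced by $\widetilde{\mathrm{gl}}_h$. By contrast, establishing the $\tfrac1h$ gerbe factor and checking the compatibility hypothesis of the product formula under $\pr$ are comparatively routine given the cited results.
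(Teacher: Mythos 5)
Your proposal is correct and follows essentially the same route as the paper: the paper's proof of Proposition \ref{isomophism} consists of the single sentence that it ``follows directly from the analysis in Section \ref{mapsfixed}'', and your argument is precisely a careful unwinding of that analysis — Lemma \ref{important} for the identification of the fixed locus, the product formula along $\pr$ with compatibility of obstruction theories, the $1/h$ from the root gerbe, and the splitting $N_{M_{h,\underline{d}}}=N_1+N_2$ with the weight substitution $z\mapsto z/h+\psi(\CD_i)$ from (\ref{torusaction}) and (\ref{weighttwist}).
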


\begin{proof} This follows directly from the  analysis in Section \ref{mapsfixed}. 
	\end{proof}

\section{Wall-crossing} 
\subsection{Definition of $I$-functions} Below, we introduce classes in  $H^*(S)[z^\pm]$, called  $I$-functions. They are responsible for the wall-crossing of Gromov--Witten invariants of Hilbert schemes, and arise as integrals on wall-crossing components, which are described in Proposition \ref{isomophism}. We define them via  $S$-relative moduli spaces of maps to the total space of the tangent bundle $T_S$, 
\begin{equation*}
I^{\mathsf{H}_n}_{g,d}\Bigg(\prod^{k}_{j=1} \lambda_{\mathsf{H}}(\mu^j),z\Bigg):= 
\mathrm{e}_{\BC^*}(\bz \otimes T_S  ) \cdot \tau_* \left(  \frac{ \prod^k_{j=1} \ev^*_j\lambda_{\mathsf{H}}(\mu^j)  \cap [V_{n,d}]^{\mathrm{vir}}}{\mathrm{e}_{\BC^*}(N_{V_{n,d}}^\mathrm{vir}(\mathbf{z}))}\right) 
\end{equation*}
where, as in Section \ref{normalmaps}, 
\[ 
V_{n,d}:= \Mbar_{g,k}^\rel( \Hilb_{n}(T_S/S),d E)^{\BC^*},
\]
and 
\[ 
\tau \colon \Mbar_{g,k}^\rel( \Hilb_n(T_S/S), dE) \rightarrow S 
\]
is the natural projection associated with the $S$-relative moduli spaces of maps to $T_S \rightarrow S$. We Taylor expand rational functions in $z$ in the range $|z|>1$. 

We have the same definition for $I$-functions responsible for the wall-crossing of Gromov--Witten invariants of the orbifold symmetric product,
\begin{equation*}
	I^{\mathsf{S}_n}_{g}\Bigg(\prod^{k}_{j=1} \lambda_{\mathsf{S}}(\mu^j),z\Bigg):= 
	\mathrm{e}_{\BC^*}(\bz \otimes T_S  ) \cdot \tau_* \left(  \frac{ \prod^k_{j=1} \ev^*_j\lambda_{\mathsf{S}}(\mu^j)  \cap [V_{n}]^{\mathrm{vir}}}{\mathrm{e}_{\BC^*}(N_{V_{n}}^\mathrm{vir}(\mathbf{z}))}\right) 
\end{equation*}
where
\[ 
V_{n}:= \Mbar_{g,k}^\rel( \Sym_{n}(T_S/S),0)^{\BC^*},
\]
and  $\tau$ is the same kind of projection. 
\begin{lemma} \label{Iuniversal} Let $T$ be the 2-torus acting on $\BC^2$ with weights $t_1$ and $t_2$, and let $\alpha_1(S)$ and $\alpha_2(S)$ be the Chern roots of $S$. Under the change of variables 
	\[
	t_i \mapsto z+\alpha_i(S),
	\] 
	we have
	\begin{multline*}
	I^{\mathsf{H}_n}_{g,d}\Bigg(\prod^{k}_{j=1} \lambda_{\mathsf{H}}(\mu^j),z \Bigg)\\
	= \mathrm{e}_{\BC^*}(\bz \otimes T_S  )  \cdot  \int_{ [\Mbar_{g,k}( \Hilb_n(\BC^2), dE)^{\BC^*}]^\mathrm{vir}} \frac{\prod^k_{j=1} \ev^*_j\lambda_{\mathsf{H}}(\mu^j)}{\mathrm{e}_{\BC^*}(N^\mathrm{vir}(\mathbf{z}))} \in H^*(S)[z^\pm],
	\end{multline*}
where, on $\BC^2$, the class $\lambda_{\mathsf{H}}(\mu)$ associated to a partition $\mu$ cohomologically weighted by classes on $S$ is defined as 
	\[
	\prod^{\ell(\mu)}_{j=1} \gamma_{i_j} \cdot (a_{\mu_1}\dots a_{\mu_{\ell(\mu)}}\cdot 1_{\BC^2}). 
	\] 
	The same holds for the orbifold symmetric product, 
	\begin{multline*}
	I^{\mathsf{S}_n}_{g}\Bigg(\prod^{k}_{j=1} \lambda_{\mathsf{S}}(\mu^j),z\Bigg) \\
	=\mathrm{e}_{\BC^*}(\bz \otimes T_S  ) \cdot \int_{ [\Mbar_{g,k}( \Sym_n(\BC^2), 0)^{\BC^*}]^\mathrm{vir}} \frac{\prod^k_{j=1} \ev^*_j\lambda_{\mathsf{S}}(\mu^j)}{\mathrm{e}_{\BC^*}(N^\mathrm{vir}(\mathbf{z}))} \in H^*(S)[z^\pm],
	\end{multline*}
	such that the class $\lambda_{\mathsf{S}}(\mu)$ is defined as
	\[ 
	 = \prod^{\ell(\mu)}_{j=1} \gamma_{i_j} \cdot \bigg(\sum_{\rho\in C(\sigma )}\rho^*(\mathbb{1}_{\BC^2}\otimes\hdots \otimes \mathbb{1}_{\BC^2} ) \bigg).
	\]
	\end{lemma}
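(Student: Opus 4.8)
The plan is to treat every ingredient of the $I$-function as the associated-bundle, or ``universal'', version of the corresponding object on the fibre $\BC^2$, and to track how the single scaling weight $z$ combines with the Chern roots of $T_S$. First I would record that $\Hilb_n(T_S/S) \to S$ is the fibre bundle associated to the frame bundle of $T_S$, with fibre $\Hilb_n(\BC^2)$, via the natural action of $\mathrm{GL}_2$ (hence of its maximal torus $T$) on $\BC^2$. The same description applies to the relative moduli space $V_{n,d} = \Mbar_{g,k}^{\rel}(\Hilb_n(T_S/S),dE)^{\BC^*}$, to its universal curve and map, to its evaluation morphisms, and — crucially — to its $S$-relative perfect obstruction theory $R\pi_* F^*(T_p)$, since the relative tangent bundle $T_p$ of $\Hilb_n(T_S/S)\to S$ restricts on each fibre to the tangent bundle of $\Hilb_n(\BC^2)$. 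Thus $V_{n,d}$, with its virtual class and equivariant virtual normal complex, is the associated bundle of the corresponding $T$-equivariant data on $\Mbar_{g,k}(\Hilb_n(\BC^2),dE)^{\BC^*}$.

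Next I would reduce the structure group to the maximal torus by the splitting principle and write $T_S = L_1 \oplus L_2$ with $\mathrm{c}_1(L_i)=\alpha_i(S)$. Because the $\BC^*$-action scales the total space of $T_S$, it scales each $L_i$ with weight $1$; hence the $i$-th coordinate direction of the fibre $\BC^2 = L_1 \oplus L_2$ carries total equivariant first Chern class $z + \alpha_i(S)$. This is exactly the substitution $t_i \mapsto z + \alpha_i(S)$: every $T$-weight $t_i$ occurring in the fibrewise computation of $[V_{n,d}]^{\mathrm{vir}}$ and of $\mathrm{e}_{\BC^*}(N^{\mathrm{vir}}(\mathbf{z}))$ is replaced by $z + \alpha_i(S)$ upon forming the bundle over $S$, so that the fibre $\BC^2$ is computed with its full torus $T$ while the two equivariant parameters are pinned to the Chern roots shifted by the scaling weight.

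It then remains to match the cohomology classes and integrate along the fibre. On $\Hilb_n(T_S/S)$ every subscheme is supported in a single fibre $T_{S,s}\cong \BC^2$, so in the nested-Hilbert-scheme construction of $\lambda_{\mathsf H}(\mu)$ the support morphism $q$ factors through the small diagonal $S \hookrightarrow S^{\ell(\mu)}$; hence $q^*(\gamma_{i_1}\otimes\cdots\otimes\gamma_{i_{\ell(\mu)}})$ becomes $\prod_j \gamma_{i_j}\in H^*(S)$ while the remaining fibrewise data is precisely $a_{\mu_1}\cdots a_{\mu_{\ell(\mu)}}\cdot 1_{\BC^2}$. This reproduces the stated $\BC^2$-class $\prod_j \gamma_{i_j}\cdot(a_{\mu_1}\cdots a_{\mu_{\ell(\mu)}}\cdot 1_{\BC^2})$, and the identical factorisation of the relative inertia stack (whose $\sigma$-fixed locus is again fibrewise over $S$) yields the orbifold class on $\Sym_n(\BC^2)$. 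Since $\tau$ is the projection of the associated bundle to $S$, the projection formula shows that its pushforward of a fibrewise $T$-equivariant integrand equals the fibre integral — a rational function of $t_1,t_2$ — with $t_i\mapsto z+\alpha_i(S)$; the prefactor $\mathrm{e}_{\BC^*}(\mathbf z\otimes T_S)$ is the same symbol on both sides and passes through unchanged. The argument for $\Sym_n$ is verbatim, with nested Hilbert schemes replaced by the relative inertia stack.

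\textbf{Main obstacle.} The delicate point is making the first two steps precise at the level of virtual geometry: one must verify that the $S$-relative obstruction theory of $V_{n,d}$, its $\BC^*$-fixed-locus decomposition, and the moving part of its virtual normal complex are genuinely the associated-bundle version of the $T$-equivariant theory on the fibre $\Mbar_{g,k}(\Hilb_n(\BC^2),dE)^{\BC^*}$, so that the weights twist by exactly $z+\alpha_i(S)$ with no spurious contribution from the base $S$. Equivalently, one must check that virtual localisation on the total space of $T_S$ is the universal localisation on the fibre; this equivariant-universality verification is the technical heart of the lemma.
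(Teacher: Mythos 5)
Your proposal is correct and follows essentially the same strategy as the paper, which proves the lemma by citing \cite[Proposition 4.1]{NHilb}: there too one splits $T_S$ into line bundles (via the relative flag variety and a deformation argument, i.e.\ the splitting principle you invoke) and then base-changes to the universal trivialisation of the split bundle, which is exactly your associated-bundle reduction identifying the equivariant parameters as $t_i \mapsto z+\alpha_i(S)$. The ``main obstacle'' you flag --- that the relative obstruction theory and virtual normal complex are genuinely the associated-bundle versions of the fibrewise $T$-equivariant data --- is precisely the content delegated to that cited proposition, so your outline matches the paper's proof in both structure and substance.
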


\begin{proof} Exactly the same argument as in \cite[Proposition 4.1]{NHilb} applies. The proof uses the relative flag variety of $T_S$ and a deformation argument to split $T_S$ into a sum of line bundle. For a split vector bundle, the claim follows by taking a base change to the universal trivialisation of the vector bundle, i.e., the total space of the vector bundle without the zero section. 
\end{proof}	

\subsection{Wall-crossing formulas}

\begin{theorem} \label{wsH}We have 
	\begin{multline*}
	\langle \lambda_{\mathsf H}(\mu^1), \dots, \lambda_{\mathsf H}(\mu^k) \rangle^{\sH_{n},\epsilon_-}_{g,d} \\
	=\sum_{h, \underline{d}, \underline{\mu} } \frac{1}{h!}\Bigg\langle \lambda_{\sH}(\mu^{1,0}), \hdots,\lambda_{\sH}(\mu^{k,0}) \ \Bigg| \ \prod^{h}_{i=1}I^{\mathsf{H}_{n_0}}_{g,d_i}\Bigg(\prod^{k}_{j=1} \lambda_{\mathsf{H}}(\mu^{j,i}),-\Psi_i\Bigg) \Bigg\rangle^{\sH_{n-hn_0},\epsilon_+}_{g,d_0},  
	\end{multline*}
	where we sum over positive integers $h$, length $h+1$ partitions of $d$, length $h+1$ decompositions of cohomologically weighted partitions $\mu^j$, 
	\[ 
\mu^j=\mu^{j,0} \sqcup \mu^{j,1} \sqcup \hdots \sqcup \mu^{j,h}, 
	\]
	such that $d_i$ can be zero, and $\mu^{j,i}$ can empty. If the size of a partition does match $n_0$ or $n-hn_0$, then we set it to be zero. Negative powers of $\psi$-classes are also set to be zero. 
	\end{theorem}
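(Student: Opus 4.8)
The plan is to run a \(\BC^*\)-virtual-localization argument on the proper master space of maps and to extract the wall-crossing formula as a residue identity. I would work throughout with the \(\BM\widetilde{\CF\CM}_{[m]}(S,n)\)-relative space \(\Mbar_{g,k}^{\rel}(\BM\Hilb^{\epsilon_0}_{n,[m]}(S), dE)\), which is proper by Proposition \ref{masterrelative} and carries the \(\BC^*\)-action whose fixed locus is described in Lemma \ref{torusfixed} as \(M_-\sqcup M_+\sqcup\coprod_{h,\underline d}M_{h,\underline d}\). Consider the equivariant integral of \(\prod_{j=1}^k\ev_j^*\lambda_{\sH}(\mu^j)\) against the virtual class. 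Since the master space of maps is proper and the integrand is an honest (non-localised) equivariant class, this integral is a polynomial in \(z\); in particular, expanding in the range \(|z|>1\), its coefficient of \(z^{-1}\) vanishes. Applying the virtual localization theorem and equating that coefficient with the sum of the \(z^{-1}\)-coefficients of the contributions of \(M_-\), \(M_+\) and the \(M_{h,\underline d}\) produces the desired relation, once each contribution is identified.

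The two divisorial components supply the main terms. On \(F_\pm\) there are no entangled tails, so the classes \(\lambda_{\sH}(\mu^j)\) restrict to ordinary Nakajima--Grojnowski classes with no \(z\)-dependence, and by Section \ref{Masterfixed} the virtual normal bundles are the line bundles \(\bz^{-1}\otimes\BM^\vee_{\widetilde{\CF\CM}_{[m]}(S,n)}\) and \(\bz\otimes\BM_{\widetilde{\CF\CM}_{[m]}(S,n)}\). Expanding \(1/e_{\BC^*}(N_{M_\mp})\) at \(z=\infty\), the \(z^{-1}\)-coefficient of the \(M_-\)-contribution equals \(-\langle\lambda_{\sH}(\mu^1),\dots,\lambda_{\sH}(\mu^k)\rangle^{\sH_n,\epsilon_-}_{g,d}\), which I would move to the left-hand side, while the \(M_+\)-contribution yields the term without \(I\)-function insertions (the \(h=0\) summand, with empty product of \(I\)-functions and \(d_0=d\)).

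The wall-crossing components \(M_{h,\underline d}\) with \(h\ge1\) produce the correction terms. Here I would pull back along the gluing morphism \(\widetilde{\mathrm{gl}}_h\) and combine Lemma \ref{important}, Proposition \ref{isomophism}, and the splitting Lemma \ref{Nsplitting}: the virtual class factorises as a product over the \(\epsilon_+\)-factor and the \(h\) tail-factors \(V_{n_0,d_i}\), the Nakajima class \(\lambda_{\sH}(\mu^j)\) splits over the ordered decompositions \(\mu^j=\mu^{j,0}\sqcup\cdots\sqcup\mu^{j,h}\), and the inverse Euler class of the virtual normal complex is the product of the tail factors \(\CI_{n_0,d_i}(z/h+\psi(\CD_i))\) with the gluing factor \(1/(-z/h-\psi(\CD_1)-\psi(\CE_{m+1})-\sum_{i=h}^{\infty}\CY_i)\) and the Chern contribution \(\prod_{i=1}^{h}(\sum_j \mathrm{c}_j(S)(z/h+\psi(\CD_i))^j)\). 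Recognising \(e_{\BC^*}(\bz\otimes T_S)\cdot\tau_*(\,\cdots\cap[V_{n_0,d_i}]^{\vir})\) as the definition of \(I^{\sH_{n_0}}_{g,d_i}\), each tail assembles into an \(I\)-function insertion, and the gluing factor performs the node residue, whose effect is to substitute the \(I\)-function variable \(z/h+\psi(\CD_i)\) by the relative \(\psi\)-class \(-\Psi_i\) on the surviving moduli space \(\Mbar_{g,k}^{\rel}(\Hilb^{\epsilon_+}_{n-hn_0,[m+h]}(S),d_0E)\); the prefactor \(1/h!\) arises from the \(S_h\) permuting the \(h\) interchangeable entangled tails after summing over the ordered decompositions and degree splittings.

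The main obstacle is precisely this last identification. The conceptual engine—properness forcing the \(z^{-1}\)-coefficient to vanish, plus the fixed-locus decomposition already established in Lemmas \ref{torusfixed}, \ref{important} and Proposition \ref{isomophism}—is in place, so the difficulty is to carry out the node residue correctly: one must convert the localisation contribution of each tail, carrying the fractional weight \(z/h\) coming from the torus relation \eqref{torusaction} and the \(h\)-root structure, into a genuine relative insertion \(I^{\sH_{n_0}}_{g,d_i}(\prod_j\lambda_{\sH}(\mu^{j,i}),-\Psi_i)\), and to verify that the Euler factor \(e_{\BC^*}(\bz\otimes T_S)\) recombines with the Chern-class factor of Proposition \ref{isomophism} exactly into the normalisation defining the \(I\)-functions. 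Everything else is bookkeeping of degrees, signs, and the combinatorics of the decompositions \(\mu^j=\mu^{j,0}\sqcup\cdots\sqcup\mu^{j,h}\) and the degree partitions \(\underline d\).
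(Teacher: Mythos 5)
Your proposal is correct and follows essentially the same route as the paper: the paper's proof is precisely the residue of the $\BC^*$-localisation formula on the proper relative master space of maps, with the fixed-locus contributions read off from Proposition \ref{isomophism} and Lemma \ref{Nsplitting}, and the final simplification (your "node residue" converting the tail contributions with weight $z/h+\psi(\CD_i)$ into $I$-function insertions at $-\Psi_i$) delegated to the analogous computation in \cite[Section 10.6]{NHilb}. You have correctly identified both the structure of the argument and the one genuinely technical step it relies on.
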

\begin{proof} The result follows from taking the residue in the localisation formula on 	$\Mbar_{g,k}^\rel(\BM\Hilb^{\epsilon_0}_{n,[m]}(S), dE)$, as for tautological integrals in \cite{NHilb}. The contribution of the $\BC^*$-fixed components can be determined through their descriptions in Proposition \ref{isomophism}  and the splitting formula for Nakajima--Grojnowski classes in Lemma \ref{Nsplitting}. Since the expressions in Proposition \ref{isomophism} are of the same shape as in \cite[Proposition 10.12]{NHilb}, the same simplification of the localisation formula from  \cite[Section 10.6]{NHilb}  applies to this setting, with the difference that we also sum over decompositions of partitions and degrees of exceptional curve classes. 
\end{proof}	
The same result holds for invariants associated to symmetric products. 

\begin{theorem} \label{wsS} We have 
	\begin{multline*}
	\langle \lambda_{\sS}(\mu^1), \dots, \lambda_{\sS}(\mu^k) \rangle^{\sS_n,\epsilon_-}_{g}\\
	=\sum_{h,\underline{\mu}} \frac{1}{h!}\Bigg\langle \lambda_{\sS}(\mu^{1,0}), \hdots,\lambda_{\sS}(\mu^{k,0})\ \bigg | \ \prod^{h}_{i=1}I^{\mathsf{S}_{n_0}}_{g}\Bigg(\prod^{k}_{j=1} \lambda_{\sS}(\mu^{j,i}),-\Psi_i\Bigg) \Bigg\rangle^{\sS_{n-hn_0},\epsilon_+}_{g},  
	\end{multline*}
	where we sum over positive integers $h$ and length $h+1$ decompositions of cohomologically weighted partitions $\mu^j$, such that $\mu^{j,i}$ can be empty. The same conventions as in Theorem \ref{wsH} apply in this case. 
\end{theorem}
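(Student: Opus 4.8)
The plan is to run the master-space localisation argument of Theorem~\ref{wsH} essentially verbatim, replacing each Hilbert-scheme input by its symmetric-product counterpart. Concretely, I would apply the virtual $\BC^*$-localisation formula to the moduli space $\Mbar_{g,k}^\rel(\BM\Sym^{\epsilon_0}_{n,[m]}(S),0)$, which is proper by Proposition~\ref{masterrelative}, integrating the equivariant insertions $\prod_j\ev_j^*\lambda_{\sS}(\mu^j)$ against the virtual fundamental class, and then take the residue in the resulting localisation formula exactly as for the tautological integrals in \cite{NHilb}. As in \cite[Section~10.6]{NHilb}, the two divisorial fixed components $M_-$ and $M_+$ will produce the $\epsilon_-$ and $\epsilon_+$ invariants straddling the wall $\epsilon_0$, while the components $M_h$ supply the wall-crossing corrections indexed by the number $h$ of entangled tails.

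The combinatorial bookkeeping then proceeds just as in the Hilbert-scheme case. By Lemma~\ref{torusfixed} the $\BC^*$-fixed locus decomposes as $M_-\sqcup M_+\sqcup\coprod_{h\ge 1}M_h$, and Proposition~\ref{isomophism}, which is stated for both master spaces, identifies the virtual class and the Euler class of the virtual normal complex of each $M_h$, after pulling back along $\widetilde{\mathrm{gl}}_h$ and projecting as in~(\ref{proj}), as a product of a core contribution over $\widetilde{\Sym}^{\epsilon_+}_{n',[m+h]}(S)^{\frac{1}{h}}$ and $h$ identical contributions over the local model $V_{n_0}=\Mbar_{g,k}^\rel(\Sym_{n_0}(T_S/S),0)^{\BC^*}$. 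To distribute the insertion classes across these factors I would invoke the splitting formula for orbifold classes (the orbifold analogue of Lemma~\ref{Nsplitting}), which expresses $\lambda_{\sS}(\mu^j)|_{\widetilde{\mathrm{gl}}_h^*X_h}$ as a sum over ordered decompositions $\mu^j=\mu^{j,0}\sqcup\mu^{j,1}\sqcup\cdots\sqcup\mu^{j,h}$ with the equivariant weight substitution~(\ref{weighttwist}), namely $z\mapsto z/h+\psi(\CD_i)$ on the $i$-th tail. Recognising each per-tail factor as the $I$-function $I^{\mathsf{S}_{n_0}}_g(\prod_j\lambda_{\sS}(\mu^{j,i}),z)$ of the preceding subsection, and evaluating it at the cotangent weight, which becomes $-\Psi_i$ after the integration over the glued entangled tails, yields precisely the claimed insertions $I^{\mathsf{S}_{n_0}}_g(\cdots,-\Psi_i)$; the symmetry factor $1/h!$ arises from the $S_h$-symmetry permuting the $h$ indistinguishable tails together with the normalisations recorded in Proposition~\ref{isomophism}.

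Because the curve class on $\BM\Sym^{\epsilon_0}_{n,[m]}(S)$ is identically $0$, the sole simplification relative to Theorem~\ref{wsH} is that there is no partition $\underline{d}$ of a degree to sum over, so the sum runs only over $h$ and the decompositions $\underline{\mu}$. Since the expressions furnished by Proposition~\ref{isomophism} have the same shape as \cite[Proposition~10.12]{NHilb}, the simplification of the localisation sum carried out in \cite[Section~10.6]{NHilb} transcribes directly, now with the orbifold splitting lemma and the $I$-functions $I^{\mathsf{S}_{n_0}}_g$ in place of their Hilbert-scheme analogues.

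The step I expect to be the main obstacle is the identification of the wall-crossing contributions with the $I$-functions under the weight substitution~(\ref{weighttwist}) in the presence of twisted stable maps and the $h$-root gerbe structure: one must verify that the product formula for Gromov--Witten virtual classes of \cite{Behr} and the compatibility of obstruction theories under the projection~(\ref{proj}) persist for maps to the orbifold factors $\Sym_{n_0}(T_S/S)$, and that the age shifts and the inertia-stack decomposition of $\BM\Sym^{\epsilon_0}_{n,[m]}(S)$ do not interfere with the residue extraction. All of this is local in nature and, once the orbifold splitting lemma is in hand, reduces to the same analysis as in the Hilbert-scheme setting; I would therefore treat it by citing the relevant steps of \cite{NHilb} and indicating the required modifications rather than repeating the computation.
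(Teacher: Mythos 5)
Your proposal is correct and follows essentially the same route as the paper, whose proof of this theorem simply defers to the argument for Theorem~\ref{wsH}: residue extraction in the $\BC^*$-localisation formula on $\Mbar_{g,k}^\rel(\BM\Sym^{\epsilon_0}_{n,[m]}(S),0)$, combined with Proposition~\ref{isomophism}, the orbifold splitting lemma, and the simplification from \cite[Section 10.6]{NHilb}. Your observation that the only structural difference is the absence of the degree partition $\underline{d}$ (since the curve class is $0$) matches the paper exactly.
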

\begin{proof} Same as for Theorem \ref{wsS}. 
	\end{proof}

By applying Theorem \ref{wsH} and \ref{wsS} repeatedly, we obtain the following expressions of Gromov--Witten invariants of $\Hilb_n(S)$ and $\Sym_n(S)$ in terms of Fulton--MacPherson integrals and Gromov--Witten invariants of $\Hilb_n(\BC^2)$ and $\Sym_n(\BC^2)$. Observe that for $\epsilon >1$, the spaces $\Hilb^{\epsilon}_{n,[m]}(S)$ and $\Sym^{\epsilon}_{n,[m]}(S)$ are empty, hence, after crossing all walls, we obtain integrals on Fulton--MacPherson spaces, defined in Section \ref{FMspaces}. 

\begin{corollary} \label{cor} We have 
\begin{align*}
	&	\langle \lambda_{\mathsf H}(\mu^1), \dots, \lambda_{\mathsf H}(\mu^k) \rangle^{\sH_n}_{g,d}= \sum_{\underline{n}, \underline{d}, \underline{\mu} } \frac{1}{h!}\Bigg\langle  \prod^{h}_{i=1}I^{\mathsf{H}_{n_i}}_{g,d_i}\Bigg(\prod^{k}_{j=1} \lambda_{\mathsf{H}}(\mu^{j,i}),-\Psi_i\Bigg) \Bigg\rangle^{\mathsf{FM}_h} \\
		& \langle \lambda_{\sS}(\mu^1), \dots, \lambda_{\sS}(\mu^k) \rangle^{\sS_n}_{g}= \sum_{\underline{n},\underline{\mu}} \frac{1}{h!} \Bigg \langle  \prod^{h}_{i=1}I^{\sS_{n_i}}_{g}\Bigg(\prod^{k}_{j=1} \lambda_{\sS}(\mu^{j,i}),-\Psi_i\Bigg) \Bigg\rangle^{\mathsf{FM}_h}.
\end{align*}
where we sum over all partitions of $n$,
\[
(n_1, \hdots, n_h),
\]
partitions of $d$, 
\[ 
(d_0, \hdots, d_h),
\]
and decompositions of $\mu^j$ into subpartitions 
	\[ 
\mu^j=\mu^{j,0} \sqcup \mu^{j,1} \sqcup \hdots \sqcup \mu^{j,h}.
\]
The parts $n_i$ are strictly positive, $d_i$ can be zero, and $\mu^{j,i}$ can be empty. 
	\end{corollary}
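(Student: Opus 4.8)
The strategy is to deduce both identities by iterating the wall-crossing formulas of Theorems \ref{wsH} and \ref{wsS} over the whole interval $\epsilon\in(0,\infty)$ and reading off the two ends of the interpolation. The walls sit at $\epsilon_0=1/n_0$, $n_0\in\BN$, and I would cross them in order of increasing $\epsilon$, i.e. $1/n<1/(n-1)<\dots<1/2<1$. At the lower end, for $\epsilon\in(0,1/n)$ the conditions of Definition \ref{defnweight} are vacuous beyond $\ell(Z)=n$: condition (3) holds automatically since $1/\epsilon>n$, and condition (4) forbids any bubbling, so $\Hilb^{\epsilon}_{n}(S)\cong\Hilb_n(S)$ with exceptional class, Nakajima--Grojnowski classes and obstruction theory all matching the absolute ones. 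Hence $\langle\lambda_{\mathsf H}(\mu^1),\dots,\lambda_{\mathsf H}(\mu^k)\rangle^{\sH_n}_{g,d}=\langle\lambda_{\mathsf H}(\mu^1),\dots,\lambda_{\mathsf H}(\mu^k)\rangle^{\sH_n,\epsilon}_{g,d}$ for such $\epsilon$, and likewise for $\Sym_n(S)$. At the upper end, for $\epsilon>1$ condition (3) forces $Z=\emptyset$, so $\Hilb^{\epsilon}_{n,[m]}(S)=\emptyset$ whenever $n>0$; for $n=0$ condition (4) then forces every end component to carry a marking, so $\Hilb^{\epsilon}_{0,[m]}(S)=\FM_{[m]}(S)$ is the Fulton--MacPherson space of Section \ref{FMspaces}.

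I would then run the iteration. Applying Theorem \ref{wsH} at the wall $1/n_0$ rewrites an $\epsilon_-$-invariant as a sum over $h_{n_0}\ge 0$ of $\epsilon_+$-invariants carrying $h_{n_0}$ new Fulton--MacPherson markings, each absorbing a length-$n_0$ cluster into a factor $\Hilb_{n_0}(T_S/S)$; these clusters are recorded by the $I$-functions $I^{\mathsf H_{n_0}}_{g,d_i}(-,-\Psi_i)$ and, being thereafter carried as spectator insertions at their markings, are untouched by all subsequent wall-crossings. The splitting of the source insertions $\lambda_{\mathsf H}(\mu^j)$ among base and clusters is exactly Lemma \ref{Nsplitting} (resp. its analogue for orbifold classes), and the identification of each fixed-point contribution with an $I$-function is Proposition \ref{isomophism}. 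Since crossing $1/n_0$ lowers the maximal length allowed at a point from $n_0$ to $n_0-1$, feeding each resulting $\epsilon_+$-invariant into the next wall and continuing down to $\epsilon>1$ distributes all $n$ points into clusters, a cluster peeled at $1/n_0$ having size $n_0$. The surviving clusters thus realise an arbitrary partition $(n_1,\dots,n_h)$ of $n$; the terminal base is supported on $\Hilb^{\epsilon>1}_{0,[h]}(S)=\FM_{[h]}(S)$, so that $d_0=0$ and $\mu^{j,0}=\emptyset$ are forced and the remaining degree-zero invariant is the Fulton--MacPherson integral $\langle-\rangle^{\FM_h}$ of Section \ref{FMspaces}, all genus and degree dependence having passed into the $I$-function insertions.

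It remains to match the prefactors. Crossing $1/n_0$ produces the symmetry factor $1/h_{n_0}!$, so the iteration yields $\prod_{n_0}1/h_{n_0}!$; writing $h=\sum_{n_0}h_{n_0}$, the number of ordered tuples $(n_1,\dots,n_h)$ with the prescribed multiplicities $\{h_{n_0}\}$ is $h!/\prod_{n_0}h_{n_0}!$, which turns $\prod_{n_0}1/h_{n_0}!$ summed over unordered cluster types into the stated $\tfrac1{h!}\sum_{(n_1,\dots,n_h)}$ over ordered partitions of $n$. The degree bookkeeping is parallel: each application of Theorem \ref{wsH} distributes $(d_1,\dots,d_h)$ to the peeled clusters and passes $d_0$ to the base, and the vanishing of the exceptional class on the terminal $\FM_{[h]}(S)$ pins $d_0=0$, recovering the sum over $(d_0,\dots,d_h)$. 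The $\Sym_n(S)$ statement is the same argument verbatim with Theorem \ref{wsS} replacing Theorem \ref{wsH} and with no degree to distribute.

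The step I expect to be the main obstacle is the compatibility of the $\psi$-classes $\Psi_i$ through the successive wall-crossings. The $I$-function manufactured at the wall $1/n_0$ is evaluated at $-\Psi_i$ for the class $\Psi_i$ living on the intermediate stack $\mathcal{FM}_{[m]}(S)$ present at that stage, whereas the final formula demands that $\Psi_i$ be the Fulton--MacPherson $\psi$-class on the terminal $\FM_{[h]}(S)$. One must verify that introducing the later markings pulls these classes back without generating correction terms, so that the frozen insertions can be transported verbatim to the terminal space; this uses that the markings produced at distinct walls sit on distinct entangled end components, together with the boundary-divisor description $\Psi_i=\sum_{i\in I}D_I$ and the relative structure of the tower $\mathcal{FM}_{[m']}(S)\to\mathcal{FM}_{[m]}(S)$. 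Once this compatibility is in hand, the remainder is the formal iteration and the bookkeeping described above.
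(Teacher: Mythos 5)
Your proposal is correct and follows exactly the paper's route: the paper's own proof is the one-sentence observation that iterating Theorems \ref{wsH} and \ref{wsS} across all walls, together with the emptiness of $\Hilb^{\epsilon}_{n,[m]}(S)$ and $\Sym^{\epsilon}_{n,[m]}(S)$ for $\epsilon>1$, terminates in Fulton--MacPherson integrals. Your endpoint identifications, combinatorial bookkeeping of the $1/h!$ factor, and the flagged $\Psi$-class compatibility are all consistent with (and more detailed than) what the paper records.
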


Let us assemble $I$-functions of Hilbert schemes into a generating series, 
\[
I^{\mathsf{H}_n}_{g}\Bigg(\prod^{k}_{j=1} \lambda_{\mathsf{H}}(\mu^{j}),q,z\Bigg):=\sum_{d\geq 0} I^{\mathsf{H}_n}_{g,d}\Bigg(\prod^{k}_{j=1} \lambda_{\mathsf{H}}(\mu^{j}),z\Bigg)q^d \in H^{*}(S)[z^\pm][\![q]\!].
\]
This allows to write the above corollary for Gromov--Witten invariants of Hilbert schemes in the following form. 
\begin{corollary} \label{cor2}We have 
\[ 
\langle \lambda_{\mathsf H}(\mu^1), \dots, \lambda_{\mathsf H}(\mu^m) \rangle^{\sH_n}_{g}(q)=\sum_{\underline{n},\underline{\mu}} \Bigg\langle  \prod^{h}_{i=1}I^{\mathsf{H}_{n_i}}_{g,d_i}\Bigg(\prod^{m}_{j=1} \lambda_{\mathsf{H}}(\mu^{ij}),q, -\Psi_i\Bigg) \Bigg\rangle^{\mathsf{FM}_h},
\]
where we sum over partitions of the integer $n$ and decompositions of partitions $\mu^j$ as in Theorem \ref{wsH}. 
\end{corollary}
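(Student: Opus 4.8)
The plan is to treat Corollary~\ref{cor2} as the $q$-generating-series repackaging of the first identity in Corollary~\ref{cor}: no geometric input is needed beyond that identity and the definition of the generating series $I^{\mathsf{H}_{n}}_{g}(\,\cdot\,,q,z):=\sum_{d\ge 0}I^{\mathsf{H}_{n}}_{g,d}(\,\cdot\,,z)\,q^{d}$. Concretely, I would start from the definition of the left-hand side, $\langle \lambda_{\mathsf H}(\mu^1),\dots,\lambda_{\mathsf H}(\mu^m)\rangle^{\sH_n}_{g}(q)=\sum_{d\ge 0}\langle \lambda_{\mathsf H}(\mu^1),\dots,\lambda_{\mathsf H}(\mu^m)\rangle^{\sH_n}_{g,d}\,q^{d}$, and substitute for each degree-$d$ coefficient the expression furnished by Corollary~\ref{cor}.

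The one manipulation that carries content is the factorization of the $q$-weight along the distribution of the curve degree among the Fulton--MacPherson tails. In Corollary~\ref{cor} the inner sum runs over partitions $\underline{d}=(d_0,\dots,d_h)$ of $d$, so that $q^{d}=\prod_{i}q^{d_i}$. Since the bracket $\langle\,\cdot\,\rangle^{\mathsf{FM}_h}$ is a $\BC$-linear functional (integration over $\FM_{[h]}(S)$) that is multilinear in its $h$ insertions and commutes both with the substitution $z\mapsto-\Psi_i$ and with the passage to formal power series in $q$, I would interchange the outer sum over $d$ with the inner sums over $\underline{n}$, $\underline{d}$, $\underline{\mu}$. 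For fixed $\underline{n}$ and $\underline{\mu}$ this collects, in the $i$-th insertion slot, the series
\[
\sum_{d_i\ge 0}I^{\mathsf{H}_{n_i}}_{g,d_i}\!\Big(\prod_{j}\lambda_{\mathsf H}(\mu^{j,i}),-\Psi_i\Big)q^{d_i}=I^{\mathsf{H}_{n_i}}_{g}\!\Big(\prod_{j}\lambda_{\mathsf H}(\mu^{j,i}),q,-\Psi_i\Big),
\]
which is precisely the generating $I$-function evaluated at $z=-\Psi_i$. Reassembling the $h$ slots inside the Fulton--MacPherson bracket reproduces the right-hand side of Corollary~\ref{cor2}.

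Only two bookkeeping points require care, and they are the entirety of the remaining work. First, the rearrangement of summations is legitimate as an identity in $H^{*}(S)[z^{\pm}][\![q]\!]$: for each fixed power of $q$ only finitely many tuples $(\underline{n},\underline{d},\underline{\mu})$ contribute, since there are finitely many partitions of the integers $n$ and $d$ and finitely many decompositions of the finite cohomologically weighted partitions $\mu^{j}$, so the interchange holds coefficient by coefficient in $q$. Second, one must reconcile the symmetry prefactor $\tfrac{1}{h!}$ present in Corollary~\ref{cor} with its absence in Corollary~\ref{cor2}; this is accounted for by matching the index conventions, using that the ordered bracket over $\FM_{[h]}(S)$ is invariant under simultaneously permuting the $h$ tails together with their insertions. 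I expect this convention-matching of the index set to be the main --- though still elementary --- obstacle, as the analytic content is already entirely contained in Corollary~\ref{cor}.
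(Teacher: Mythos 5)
Your proposal is correct and coincides with the paper's (essentially tautological) derivation: the paper states Corollary \ref{cor2} as an immediate repackaging of the first identity of Corollary \ref{cor}, obtained by summing over $d$, factoring $q^{d}=\prod_i q^{d_i}$ along the degree splitting, and using multilinearity of the $\langle\cdots\rangle^{\mathsf{FM}_h}$ bracket to assemble the generating $I$-functions in each slot. The only caveat is the $\tfrac{1}{h!}$ bookkeeping, but that discrepancy is already present in the paper's own statement (Corollary \ref{cor} carries the factor, Corollary \ref{cor2} silently drops it while referring back to the same summation conventions), so your flagging of it is if anything more careful than the source.
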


\subsection{Proof of Theorem \ref{Ruan}  and Theorem \ref{mainresult}}
 By \cite[Theorem 4]{PanT}, the torus-equivariant Gromov--Witten invariants
 \[
 \langle \lambda_{\mathsf H}(\mu^1), \dots, \lambda_{\sH}(\mu^k) \rangle^{\sH_n(\BC^2)}_{g}(q) 
 \]
  of $\Hilb_n(\BC^2)$ are Tailor expansions of rational functions in $q$ with coefficients  in $\BQ(t_1,t_2)$ and with no pole at $q=-1$, that is, 
  \[ 
 \langle \lambda_{\mathsf H}(\mu^1), \dots, \lambda_{\sH}(\mu^k) \rangle^{\sH_n(\BC^2)}_{g}(q)  \in \BQ(t_1,t_2,q).
  \]
   By Lemma \ref{Iuniversal}, the $I$-functions of $\Hilb_n(S)$ can be expressed in terms of $\Hilb_n(\BC^2)$, such that torus weighs $t_i$ are substituted by $z+\alpha_i(S)$ and then expanded in the range $|z|>1$. This implies that $I$-functions are rational functions in $q$ with coefficients in $H^*(S)[z^\pm]$ and with no pole at $q=-1$, 
   \[
   I^{\mathsf{H}_n(S)}_{g}\Bigg(\prod^{k}_{j=1} \lambda_{\mathsf{H}}(\mu^{j}),q,z\Bigg) \in H^*(S)[z^\pm](q).
   \]
    In Corollary \ref{cor2}, we substitute $z$ by $\psi$-classes. By \cite[Corollary 6.4]{NHilb} and the rationality of $I$-functions, all terms in Corollary \ref{cor2} are also rational functions in $q$ with no pole at $q=-1$.  Hence, we obtain that 
  \[
 \langle \lambda_{\mathsf H}(\mu^1), \dots, \lambda_{\sH}(\mu^k) \rangle^{\sH_n(S)}_{g}(q) 
 \]
are Tailor expansions of rational functions with no pole at $q=-1$ for all surfaces $S$. 
	Moreover,  by \cite[Theorem 5]{PanT} and Lemma \ref{Iuniversal}, after substituting $q=-1$, we have
	\[ 
	I^{\mathsf{H}_n(S)}_{g}\Bigg(\prod^{k}_{j=1} \lambda_{\mathsf{H}}(\mu^{j}),q,z\Bigg)= 	I^{\mathsf{S}_n(S)}_{g}\Bigg(\prod^{k}_{j=1} \lambda_{\mathsf{S}}(\mu^j),z\Bigg).
	\]
Hence, by Corollary \ref{cor} and Corollary \ref{cor2}, after substituting $q=-1$, we have
\[
\langle \lambda_1, \dots, \lambda_k \rangle^{\sH_n(S)}_{g}(q)=\langle \lambda_1, \dots, \lambda_k \rangle^{\sS_n(S)}_{g},
\] 
where we use that the shapes of the wall-crossing formulas in Corollary \ref{cor} are identical for both $\Hilb_n(S)$ and $\Sym_n(S)$. 
This finishes the proof of Theorem \ref{mainresult}. Theorem \ref{Ruan} follows from  the identification (\ref{isoL}) and Theorem \ref{mainresult} specialised to $g=0$ and $k=3$.
\bibliographystyle{amsalpha}
\bibliography{CRC}

%Furthermore, there is a formula for Hurwitz numbers, which reads:
%\[
%\mathrm{Hur}_g^{C}(\eta^1,\ldots,\eta^n) = \sum_{\lambda\vdash d} \left(\frac{\dim\lambda}{d!}\right)^{2-2g(X)}\prod_{i=1}^n f_{\eta^i}(\lambda)
%\]
%where 
%\[
%f_\eta(\lambda) = \frac{d!}{\Fz(\eta)}\frac{\chi_{\eta}^\lambda}{\dim\lambda}.
%\]

\end{document}